\documentclass[11pt,a4paper]{article}
\usepackage{a4wide,amsfonts,amsmath,latexsym,amsthm,amssymb,euscript,eufrak,graphicx,units,mathrsfs,setspace,stmaryrd,dsfont}

\usepackage[pdftex]{color}
\definecolor{darkblue}{rgb}{0.3,0.3,0.7}

\usepackage{systeme}
\usepackage{float}
\newfloat{figure}{H}{lof}
\floatname{figure}{\figurename}

\usepackage[french,english]{babel}
\usepackage[T1]{fontenc}
\usepackage[utf8]{inputenc}

\DeclareMathAlphabet{\eufrak}{U}{}{}{}  
\SetMathAlphabet\eufrak{normal}{U}{euf}{m}{n}
\SetMathAlphabet\eufrak{bold}{U}{euf}{b}{n}

\newtheorem{prop}{Proposition}[section]

\newtheorem{theorem}[prop]{Theorem}
\newtheorem{lemma}[prop]{Lemma}
\newtheorem{corollary}[prop]{Corollary}

\newtheorem{assumption}[prop]{Assumption}

\theoremstyle{definition}

\newtheorem{remark}[prop]{Remark}

\newtheorem{definition}[prop]{Definition}
\newtheorem{notation}[prop]{Notation}

\numberwithin{equation}{section}

\def\E{\mathbb{E}}
\def\P{\mathbb{P}}
\def\real{\mathbb{R}}

\def\F{\mathcal{F}}
\def\1{\textbf{1}}

\newcommand{\eps}{\varepsilon}


\newcommand{\be}{\begin{equation}}
\newcommand{\ee}{\end{equation}}
\newcommand{\bde}{\begin{displaymath}}
\newcommand{\ede}{\end{displaymath}}
\newcommand{\beq}{\begin{eqnarray*}}
\newcommand{\eeq}{\end{eqnarray*}}
\newcommand{\beqa}{\begin{eqnarray}}
\newcommand{\eeqa}{\end{eqnarray}}
\newcommand{\bel }{\left\{\begin{array}{ll}}
\newcommand{\eel}{\cr \end{array} \right.}

\newcommand{\bex}{\begin{ex} \rm }
\newcommand{\eex}{\end{ex}}



\def\E{\mathbb E}
\def\F{{\cal F}}

\def\P{\mathbb P}



\def\cal#1{\mathcal{#1}}

\definecolor{ying}{rgb}{0.8, 0.0, 0.04}

\DeclareSymbolFontAlphabet{\mathrsfs}{rsfs}

\author{
Caroline Hillairet\footnote{ENSAE  Paris, CREST,
5  avenue Henry Le Chatelier 91120 Palaiseau, France. \; Email: \texttt{caroline.hillairet@ensae.fr}. \; This research is supported by a grant of the French National Research Agency (ANR), "Investissements d'Avenir" (LabEx Ecodec/ANR-11-LABX-0047) and the Joint Research Initiative "Cyber Risk Insurance: actuarial modeling" with the partnership of AXA Research Fund.} \and Lorick Huang\footnote{INSA de Toulouse, IMT UMR CNRS 5219, Universit\'e de Toulouse, 135 avenue de Rangueil 31077 Toulouse Cedex 4 France. \; Email: \texttt{lorick.huang@insa-toulouse.fr.}} \and Mahmoud Khabou\footnote{INSA de Toulouse, IMT UMR CNRS 5219, Universit\'e de Toulouse, 135 avenue de Rangueil 31077 Toulouse Cedex 4 France. \; Email: \texttt{mahmoud.khabou@insa-toulouse.fr}} \and Anthony R\'eveillac\footnote{INSA de Toulouse, IMT UMR CNRS 5219, Universit\'e de Toulouse, 135 avenue de Rangueil 31077 Toulouse Cedex 4 France. \; Email: \texttt{anthony.reveillac@insa-toulouse.fr}}
}

\title{The Malliavin-Stein method for Hawkes functionals}

\begin{document}

\maketitle

\allowdisplaybreaks

\begin{abstract}
\noindent
In this paper, following  Nourdin-Peccati's methodology, we combine the Malliavin calculus and Stein's method to provide general bounds on the Wasserstein distance between functionals of a compound Hawkes process and a given Gaussian density. To achieve this, we rely on the Poisson embedding representation of an Hawkes process to provide a Malliavin calculus for the Hawkes processes, and more generally for compound Hawkes processes.  As an application, we close a gap in the literature by providing the first Berry-Ess\'een bounds associated to Central Limit Theorems for the compound Hawkes process.  
\end{abstract}

\textbf{Keywords:} Hawkes process; Malliavin's calculus; Stein's method; Limit Theorems; Berry-Ess\'een bounds

\section{Introduction}
Hawkes processes are an efficient  generalisation of the Poisson processes to model  a sequence of arrivals  over time of some  type of events, that present self-exciting feature, in the sense that each arrival increases the rate of future arrivals for some period of time. This  class of counting processes allows one to capture in a more accurate way, compared to inhomogeneous Poisson processes or Cox processes, self-exciting phenomena.  Introduced by Hawkes in \cite{Hawkes},
there was historically  a first boom in their application in seismology.  Since then they have been widely used in many different fields, among which  neurosciences,  social network models, biology and population dynamics, finance or insurance. In finance,  they are accurate  
 to model  for example credit risk contagion \cite{errais2010affine}, order book or  the microstructure noise's feature of financial markets \cite{bacry2015hawkes}. In insurance also, some risks exhibit self-exciting features, as it is the case for cyber risk \cite{bessy2020multivariate}. \\
Naturally in all these applications, long term behaviour of the Hawkes process $H:=(H_t)_{t \geq 0}$ (or of functionals of $H$) are of interest.  Although we will focus only on (central) limit theorems for Hawkes functionals, we mention that large deviation principles have also been intensively studied in the literature (see \cite{Bordenave_Torrisi} for the first work in that direction for linear Hawkes processes with many extensions in the non-linear case).\\\\
\noindent    
The first main analysis of (central) limit theorems for Hawkes processes has been derived in \cite{Bacry_et_al_2013}. For the sake of the presentation, we recall that a Hawkes process is a counting process $H:=(H_t)_{t \geq 0}$ with stochastic intensity $\lambda:=(\lambda_t)_{t \geq 0}$ given by
$$ \lambda_t = \mu + \int_{(0,t)} \Phi(t-s) dH_s,$$
where $\mu \geq 0$ and $\Phi: \real_+ \to \real_+$ are two given parameters. Under an integrability assumption (which reads as Assumption \ref{assumptionPhi1} below), according to\footnote{Note that we consider a particular case of the results in \cite{Bacry_et_al_2013} which hold for multidimensional Hawkes processes.} \cite[Theorem 2]{Bacry_et_al_2013}, 
\begin{equation}
\label{eq:introconv1}
\left(\frac{H_{Tv} - \int_0^{Tv} \E[\lambda_s] ds}{\sqrt{T}}\right)_{v \in [0,1]} \underset{T\to+\infty}{\overset{\mathcal{L}-\mathcal{S}}{\longrightarrow}} (\tilde \sigma W_v)_{v\in [0,1]},
\end{equation}
where $\mathcal{L}-\mathcal{S}$ stands for the convergence in law as a process in the Skorokhod topology with $\tilde \sigma>0$ an explicit constant depending only on $\mu$ and $\Phi$ and $W$ a Brownian motion. This result is a consequence of the martingale limit theorem that can be found as \cite[Lemma 7]{Bacry_et_al_2013} (once again we present a particular case of this result) 
\begin{equation}
\label{eq:introconv2}
\left(\frac{H_{Tv} - \int_0^{Tv} \lambda_s ds}{\sqrt{T}}\right)_{v \in [0,1]} \underset{T\to+\infty}{\overset{\mathcal{L}-\mathcal{S}}{\longrightarrow}} ({\sigma} W_v)_{v\in [0,1]},
\end{equation}
with $ \sigma>0$ an explicit constant depending only on $\mu$ and $\Phi$ (but still different from $\tilde \sigma$ in (\ref{eq:introconv1})). Naturally, limit theorems (\ref{eq:introconv1}) and (\ref{eq:introconv2}) come as a second step after a law of large numbers for Hawkes processes has been derived; program which has been performed in \cite{Bacry_et_al_2013}.\\\\
\noindent
Generalisations of limit theorems (\ref{eq:introconv1}) and (\ref{eq:introconv2}) have been obtained in \cite{Gao_Zhu}, \cite{Zhu_14}, \cite{Zhu_13}, \cite{Jaisson_Rosenbaum_15}, \cite{Horst_Xu}, \cite{Horst_Wei_2019} for different functionals of the Hawkes process (to mention a few references) in different contexts. \\\\
\noindent 
In spite of this large variety of (functional) limit theorems, it appears that a quantification of this convergence in a form of a Berry-Ess\'een bound \footnote{ Actually, Berry-Ess\'een bounds measure the CLT's rate of convergence by bounding the distance between cumulative distribution functions (Kolmogorov-Smirnov metric). {Here we keep  referring  to the term Berry-Ess\'een for the quantification of  the convergence in the Wasserstein distance}.} is not available in the literature. To be more specific, we are not aware of a Berry-Ess\'een bound for the 1-marginal convergence in law (by taking $v=1$ in (\ref{eq:introconv2})): 
\begin{equation}
\label{eq:introconv3}
\frac{H_{T} - \int_0^{T} \lambda_s ds}{\sqrt{T}} \underset{T\to+\infty}{\overset{\mathcal{L}}{\longrightarrow}} \mathcal{N}(0,{\sigma}^2).
\end{equation}
Several authors (see \cite{Privault:2018aa}, \cite{Besancon} or \cite{torrisi}) have used the Nourdin-Peccati methodology to provide Berry-Ess\'een-type bounds for the normal approximation of counting processes but no result in the literature allows one to quantify convergence (\ref{eq:introconv3}). The closest result is the one of \cite{torrisi}; we refer to Remark \ref{rk:Torrisi} for a discussion on this matter.
\noindent
In this paper, we consider a compound Hawkes process $X:=(X_t)_{t\geq 0}$, of the form 
$$X_t = \sum_{i=1}^{H_t} Y_i, \quad t \geq 0,$$
with $(Y_i)_{i\geq 1}$ a sequence of iid random variables with distribution $\nu$ (and independent of $H$) and  we provide quantitative limit theorems for $X$ in the spirit of (\ref{eq:introconv2})-(\ref{eq:introconv3}). To this end, we adapt the (by now classical) Malliavin-Stein approach (initiated in \cite{Nourdin_Peccati} that we will refer to  Nourdin-Peccati's approach) to derive general bounds in the Wasserstein distance between a class of functionals of the compound Hawkes process and a given Gaussian density (see Theorem \ref{th:main} and especially Relation (\ref{eq:mainestimate})). More precisely, we make use of the Poisson embedding representation of the Hawkes process (initially introduced in \cite{Bremaud_Massoulie}) which allows one to write the Hawkes process $H$ together with its intensity process $\lambda$ and the compound process $X$ as the unique solution of an SDE driven by a Poisson measure $N$ on $\real_+^2\times \real$ (see Theorem \ref{th:HRR} below) to derive a Malliavin calculus with respect to the Hawkes process. This completes the analysis initiated for a general Hawkes process in \cite{Hillairet_Reveillac_Rosenbaum}, where already a Mecke formula has been obtained.  With this material at hand, we can then consider a large class of Hawkes functionals $F$ (taking the form of a divergence operator with respect to the baseline Poisson measure $N$) 
for which we adapt the Malliavin-Stein method of \cite{Nourdin_Peccati} (or more precisely in this Poisson context of \cite{Peccati_2010}) to derive bounds between the Wasserstein distance of any functional $F$ as above and any distribution $\mathcal{N}(0,\sigma^2)$ with $\sigma^2 >0$.  \\\\
\noindent
Our approach includes as a particular case of these functionals, the compound Hawkes process itself (centered) and counterpart of Convergence (\ref{eq:introconv3}) for the Hawkes process $H$ can be quantified in the Wasserstein distance denoted by $d_W$ (see Theorem \ref{th:main2} and Theorem \ref{th:main3}). For instance, in the famous cases of exponential and Erlang's kernels\footnote{The exponential kernel refers to $\Phi(t):=\alpha e^{-\beta t}$, whereas Erlang's kernel refers to $\Phi(t):=\alpha t e^{-\beta t}$ where in both cases $\alpha, \beta$ are well-chosen positive constants.} 
$$ d_W\left(\frac{X_{T} - m \int_0^{T} \lambda_s ds}{\sqrt{T}},  G\right) \leq \frac{C_{\Phi,\nu}}{\sqrt{T}}, \quad \forall T>0, \quad G \sim \mathcal{N}(0,{\sigma}^2 \ \vartheta^2),$$
where $C_{\Phi,\nu}$ and $ \sigma^2, \vartheta^2$ are  explicit constants and $m:=\int_\real x \nu(dx)$ (see Theorem \ref{th:main3} for a precise statement). As Theorem \ref{th:main2} suggests, we were not able to prove the general $T^{-1/2}$ rate for any kernel $\Phi$. Indeed, the term in the bound requires some specific estimates on some cross-correlations of the intensity process of the given Hawkes process that we were not able to perform in a general framework. These correlation estimates are for sure of interest outside the scope of this paper. We refer to Section \ref{section:review} for a review of our results on quantitative counterparts of limit theorems (\ref{eq:introconv2})-(\ref{eq:introconv3}) regarding the (compound) Hawkes process.\\\\
\noindent
The paper is organised as follows. First we derive in Section \ref{section:notations} elements of stochastic analysis for the Hawkes process including the Poisson embedding SDE and the definition of a Malliavin calculus for the Hawkes process (following our companion paper \cite{Hillairet_Reveillac_Rosenbaum}). The main element of the Malliavin-Stein method is also presented in this section. General bounds for Hawkes functionals are presented in Section \ref{section:main} and the Berry-Ess\'een bounds associated to generalisation for the Central Limit Theorem \ref{eq:introconv2} are presented in Section \ref{section:BE}, starting with   a review of our quantitative limit theorems for the compound Hawkes process in Section \ref{section:review}. Technical lemmata including some results on exponential and Erlang kernels are contained in the Appendix (Section \ref{section:teclemma}). 

\section{Notations and preliminaries}
\label{section:notations}

In this section we list all the mathematical framework we will use in our analysis. More precisely, we first recall some general elements of stochastic analysis in Section \ref{sec:Poisson}. Then with this material at hand, we make precise in Section \ref{section:rep} the representation of the compound Hawkes process as a solution to an SDE with respect to a Poisson random measure.  The former representation will turn to be fundamental in our analysis and we provide in Section \ref{section:MalliavinHawkes} the Malliavin derivative of the compound Hawkes process and give an integration by parts formula in Theorem \ref{th:IPPH} which completes the Mecke formula obtained in \cite{Hillairet_Reveillac_Rosenbaum}. Finally, we recall the main elements of Stein's method in Section \ref{section:Stein}.\\\\
For $E$ a topological space, we set $\mathcal B(E)$ the $\sigma$-algebra of Borelian sets. 

\subsection{Elements of stochastic analysis on the Poisson space}
\label{sec:Poisson}
Let $\nu$ be a Borelian measure on $\real$ with $\nu(\real)=1$ and $\nu(\{0\})=0$.\\\\
In this section we model a Hawkes process and a compound Hawkes process using the Poisson embedding representation (or thinning procedure) in the spirit of \cite{Hillairet_Reveillac_Rosenbaum}. To this end, we need three variables for the Poisson measure : $t$ for the jump times, $x$ will stand for the size the jump (whose distribution $\nu$) and $\theta$ will play the role of an auxiliary variable required for the representation itself (according to the thinning algorithm).\\\\   
Let the space of configurations
$$ \Omega^N:=\left\{\omega^N=\sum_{i=1}^{n} \delta_{(t_{i},\theta_i,x_i)}, \; 0=t_0 < t_1 < \cdots < t_n, \; (\theta_i,x_i) \in \real_+\times \real,  \; n\in \mathbb{N}\cup\{+\infty\} \right\}.$$
Each path of a counting process is represented as an element $\omega^N$ in $\Omega^N$ which is a $\mathbb N$-valued measure on $\mathbb{R}_+^2\times \real$. Let $\mathcal F^N$ be the $\sigma$-field associated to the vague topology on $\Omega^N$, and $\P^N$ the Poisson measure under which the counting process $N$ defined as : 
$$ N([0,t]\times[0,b]\times(-\infty,y])(\omega):=\omega([0,t]\times[0,b]\times(-\infty,y]), \quad t \geq 0, \; (b,y) \in \real_+\times \real,$$
is an homogeneous Poisson process with intensity measure $dt \otimes d\theta \otimes \nu$, that is, for any $(t,b,y) \in [0,T]\times \real_+\times \real$, $N([0,t]\times[0,b]\times(-\infty,y])$ is a Poisson random variable with intensity $ b \, t \, \nu((-\infty,y])$.\\\\ 
We set $\mathbb F^N:=(\F_t^N)_{t\geq 0}$ the natural history of $N$, that is $\mathcal{F}_t^N:=\sigma(N( \mathcal T  \times B), \; \mathcal T \subset \mathcal{B}([0,t]), \; B \in \mathcal{B}(\real_+\times \real))$.
Let also, $\mathcal F_\infty^N:=\lim_{t\to+\infty} \mathcal F_t^N$. The expectation with respect to $\P^N$ is denoted by $\E[\cdot]$. For $t\geq 0$, we denote by $\E_t[\cdot]$ the conditional expectation $\E[\cdot \vert \mathcal F_t^N]$.\\\\
\noindent
We describe some elements of stochastic analysis on the Poisson space, especially the shift operator, the Malliavin derivative and its dual operator : the divergence. 

\begin{definition}[Shift operator]
\label{definition:shift}
We define for $(t,\theta,x)$ in $\mathbb{R}_+\times \real_+\times \real$ the measurable maps  
$$ 
\begin{array}{lll}
\eps_{(t,\theta,x)}^+ : &\Omega^N &\to \Omega^N\\
&\omega &\mapsto  \eps_{(t,\theta,x)}^+(\omega),
\end{array}
$$
where for any $A$ in $\mathcal{B}(\mathbb{R}_+\times \real_+\times \real)$
$$(\eps_{(t,\theta,x)}^+(\omega))(A) := \omega(A \setminus {(t,\theta,x)}) + \textbf{1}_A(t,\theta,x),$$
with 
$$ \textbf{1}_{A}(t,\theta,x):=\left\lbrace \begin{array}{l} 1, \quad \textrm{if } (t,\theta,x)\in A,\\0, \quad \textrm{else.}\end{array}\right. $$
\end{definition}

\begin{lemma}
\label{lemma:mesur}
Let $t \geq 0$ and $F$ be an $\mathcal F_t^N$-measurable random variable. Let $v > t$ and $(\theta,x)\in \real_+\times \real$. It holds that 
$$ F\circ\eps_{(v,\theta,x)}^+ = F, \quad \P-a.s.. $$
\end{lemma}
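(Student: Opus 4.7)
The plan is to reduce the statement to a pointwise identity on the generators of $\mathcal{F}_t^N$ and then extend by a standard monotone-class argument. First I would unfold the definition of $\varepsilon_{(v,\theta,x)}^+$ on a generating set of the form $A = \mathcal{T}\times B$, with $\mathcal{T}\in \mathcal{B}([0,t])$ and $B\in\mathcal{B}(\mathbb{R}_+\times\mathbb{R})$. Since $v>t$, the point $(v,\theta,x)$ does not belong to $A$, hence $\mathbf{1}_A(v,\theta,x)=0$ and $A\setminus\{(v,\theta,x)\}=A$. Applying Definition \ref{definition:shift} gives
$$N(A)\circ \varepsilon_{(v,\theta,x)}^+(\omega)=(\varepsilon_{(v,\theta,x)}^+\omega)(A)=\omega(A\setminus\{(v,\theta,x)\})+\mathbf{1}_A(v,\theta,x)=\omega(A)=N(A)(\omega),$$
pointwise on $\Omega^N$. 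Thus the identity holds for every canonical generator of $\mathcal{F}_t^N$.

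Next I would promote this pointwise identity to all $\mathcal{F}_t^N$-measurable random variables via a functional monotone-class argument. Let $\mathcal{H}$ denote the set of bounded random variables $F$ such that $F\circ\varepsilon_{(v,\theta,x)}^+=F$ $\mathbb{P}$-a.s. By the previous step, $\mathcal{H}$ contains the indicators $\mathbf{1}_{\{N(A)=k\}}$ for $A=\mathcal{T}\times B$ as above and $k\in\mathbb{N}$, hence contains the indicators of a $\pi$-system generating $\mathcal{F}_t^N$. The set $\mathcal{H}$ is a vector space, contains the constants, and is closed under bounded monotone limits (almost-sure limits are preserved by composition with $\varepsilon_{(v,\theta,x)}^+$, since this composition is the pullback by a measurable map, so the exceptional null set remains null). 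The functional monotone-class theorem therefore yields the identity for every bounded $\mathcal{F}_t^N$-measurable $F$.

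Finally, to remove the boundedness restriction, I would truncate a general $\mathcal{F}_t^N$-measurable $F$ by $F_n:=(F\wedge n)\vee(-n)$, apply the result to each $F_n$, and pass to the limit $n\to\infty$ using the almost-sure convergence $F_n\to F$ together with its pullback by $\varepsilon_{(v,\theta,x)}^+$.

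There is really no serious obstacle: the lemma is essentially a book-keeping statement that the shift operator at a point $(v,\theta,x)$ with $v>t$ acts trivially on the restriction $N|_{[0,t]\times\mathbb{R}_+\times\mathbb{R}}$ that generates $\mathcal{F}_t^N$. The only mild subtlety is keeping the almost-sure qualifier under control through the monotone-class extension, which as noted above is handled by the stability of negligible sets under composition with the measurable map $\varepsilon_{(v,\theta,x)}^+$.
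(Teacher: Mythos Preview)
The paper states Lemma~\ref{lemma:mesur} without proof (it is presented as an immediate consequence of the definition of the shift operator, and the generator-level computation appears later as Remark~2.6). Your argument is correct and is the natural way to fill in the details.

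One small point of streamlining: the generator identity $N(A)\circ\varepsilon^+_{(v,\theta,x)}=N(A)$ for $A=\mathcal T\times B$ with $\mathcal T\subset[0,t]$ holds \emph{pointwise} on $\Omega^N$, as you yourself note. Consequently, if you run the monotone-class argument with the class $\mathcal H$ of bounded $F$ satisfying the \emph{pointwise} identity $F\circ\varepsilon^+_{(v,\theta,x)}=F$, closure under monotone limits is immediate and you never need to invoke any absolute-continuity statement about how null sets behave under pullback by $\varepsilon^+_{(v,\theta,x)}$. Your remark about ``stability of negligible sets'' is therefore unnecessary (and would itself require justification, e.g.\ via the Mecke formula, if it were actually needed). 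Apart from this cosmetic simplification, the proof is complete.
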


\begin{definition}[Malliavin derivative]
For $F$ in $L^2(\Omega,\mathcal F_\infty^N,\P)$, we define $D F$ the Malliavin derivative of $F$ as 
$$ D_{(t,\theta,x)} F := F\circ \eps_{(t,\theta,x)}^+ - F, \quad (t,\theta,x) \in \real_+^2\times \real.  $$
\end{definition}

\begin{remark}
{Our approach  follows \cite{Picard_French_96,Privault_LectureNotes}. In our setting, the measure $\nu$ induces a Malliavin calculus for the compound Poisson process as a L\'evy process. In that realm, the previous Malliavin calculus was extended to general L\'evy processes in \cite{Sole_etal}. There the Malliavin derivative takes the form of : $D_{(t,\theta,x)} F:= \frac{F\circ \eps^+_{(t,\theta,x)} - F}{x}$ but the intensity measure is not exactly the one we consider here. Hence, here we decided to follow the classical approach (without the $1/x$ normalisation) with compensator $\nu$ (we refer to \cite{Murr} or \cite[Section 6.7]{Privault_LectureNotes} for more details) as  our baseline process is the Poisson process $N$ and not the compound process $\int x N(dt,d\theta,dx)$.}
\end{remark}

\noindent The following definition is a by-product of \cite[Th\'eor\`eme 1]{Picard_French_96} (see also \cite{nualart1990anticipative}).
\begin{definition}
Let $\mathcal I$  be the sub-sigma field of $\mathcal{B}(\real_+^2\times \real)\otimes \mathcal F^N$   of stochastic processes $Z:=(Z_{(t,\theta,x)})_{(t,\theta,x) \in \real_+\times \real_+\times \real}$ in $L^1(\Omega \times \real_+^2\times \real),\P\otimes dt\otimes d\theta \otimes \nu)$  such that 
$$ D_{(t,\theta,x)} Z_{(t,\theta,x)} = 0, \quad \textrm{ for a.a. } (t,\theta,x) \in \real^2_+ \times \real.$$
\end{definition}

\begin{remark}
\label{rem:Nshift}
Let $(t_0,\theta_0,x_0)$ in $\real^2_+\times \real$, $(s,t)$ in $\real_+^2$ with $t_0 < s < t$. For $\mathcal T \in \{(s,t), (s,t], [s,t), [s,t]\}$ and $B$ in $\mathcal B(\real_+^2 \times \real)$,  we have that : 
$$ N\circ \eps_{(t_0,\theta_0,x_0)}^+ (\mathcal T\times B) = N (\mathcal T\times B). $$
\end{remark}

\begin{definition}
We set $\mathcal S$ the set of stochastic processes $Z:=(Z_{(t,\theta,x)})_{(t,\theta,x) \in \real_+^2\times \real}$ in $\mathcal I$ such that : 
$$ \E\left[\int_{\real_+^2\times \real} \left|Z_{(t,\theta,x)}\right|^2 dt d\theta \nu(dx)\right] + \E\left[\left(\int_{\real_+^2\times \real} Z_{(t,\theta,x)} N(dt,d\theta,dx)\right)^2\right]<+\infty,$$
where $\int_{\real_+^2\times \real} Z_{(t,\theta,x)} N(dt,d\theta,dx)$  is understood in the sense of the Stieltjes integral.\\\\ 
For $Z$ in $\mathcal S$, we set the divergence operator with respect to $N$ as  
\begin{equation}
\label{eq:delta}
\delta^N(Z):=\int_{\real_+^2\times \real} Z_{(t,\theta,x)} N(dt,d\theta,dx) - \int_{\real_+^2\times \real} Z_{(t,\theta,x)} dt d\theta \nu(dx).
\end{equation}
\end{definition}

\noindent We conclude this section with the integration by parts formula on the Poisson space (see \cite[Remarque 1]{Picard_French_96})
\begin{prop}[See \textit{e.g.} \cite{Picard_French_96}]
\label{prop:IPP}
Let $F$ be in $L^2(\Omega,\mathcal F_\infty^N,\P)$ and $Z$ be in $\mathcal{S}$. We have that 
\begin{equation}
\label{eq:IBPPoisson}
\E\left[F \delta^N(Z)\right] = \E\left[\int_{\real_+^2\times \real} Z_{(t,\theta,x)} D_{(t,\theta,x)} F dt d\theta \nu(dx)\right].
\end{equation}
\end{prop}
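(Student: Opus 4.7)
The plan is to deduce the integration by parts formula from the Mecke formula for the Poisson random measure $N$, exploiting crucially that the process $Z$ lies in $\mathcal I$ (so that $D_{(t,\theta,x)} Z_{(t,\theta,x)} = 0$ for a.e.\ $(t,\theta,x)$).

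First I would expand $\E[F\,\delta^N(Z)]$ using the definition \eqref{eq:delta} of the divergence, which splits the expectation into a term involving $N(dt,d\theta,dx)$ and a compensator term involving $dt\,d\theta\,\nu(dx)$. The second term is already in the desired form up to a factor of $F$, so the whole task reduces to rewriting
\[
\E\!\left[F \int_{\real_+^2\times \real} Z_{(t,\theta,x)}\, N(dt,d\theta,dx)\right].
\]
Here I would invoke the Mecke (campbell-type) formula for the homogeneous Poisson measure $N$ with intensity $dt\,d\theta\,\nu(dx)$: for any nonnegative (or sufficiently integrable) measurable functional $g((t,\theta,x),\omega)$,
\[
\E\!\left[\int g((t,\theta,x),\omega)\, N(dt,d\theta,dx)\right] = \E\!\left[\int g((t,\theta,x),\eps^+_{(t,\theta,x)}(\omega))\, dt\,d\theta\,\nu(dx)\right].
\]
Applying this to $g((t,\theta,x),\omega) := F(\omega)\, Z_{(t,\theta,x)}(\omega)$ and using that $(F\cdot Z_{(t,\theta,x)})\circ \eps^+_{(t,\theta,x)} = (F\circ \eps^+_{(t,\theta,x)})\cdot (Z_{(t,\theta,x)}\circ \eps^+_{(t,\theta,x)})$, the first term becomes
\[
\E\!\left[\int (F\circ \eps^+_{(t,\theta,x)})\,(Z_{(t,\theta,x)}\circ \eps^+_{(t,\theta,x)})\, dt\,d\theta\,\nu(dx)\right].
\]

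Next I would use the hypothesis $Z \in \mathcal I$, which says $D_{(t,\theta,x)} Z_{(t,\theta,x)} = 0$, equivalently $Z_{(t,\theta,x)}\circ \eps^+_{(t,\theta,x)} = Z_{(t,\theta,x)}$ for a.a.\ $(t,\theta,x)$ and $\P$-a.s. Substituting and writing $F\circ \eps^+_{(t,\theta,x)} = F + D_{(t,\theta,x)} F$, the previous expectation becomes
\[
\E\!\left[\int D_{(t,\theta,x)} F\cdot Z_{(t,\theta,x)}\, dt\,d\theta\,\nu(dx)\right] + \E\!\left[F\int Z_{(t,\theta,x)}\, dt\,d\theta\,\nu(dx)\right].
\]
The second summand exactly cancels the compensator term subtracted from $\delta^N(Z)$, yielding \eqref{eq:IBPPoisson}.

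The only real obstacle is justifying the use of the Mecke formula, i.e.\ the interchange/integrability required to apply it to $g = F\cdot Z$. This is ensured by the definition of $\mathcal S$: the two finiteness conditions imposed on $Z$, together with $F \in L^2(\Omega,\mathcal F_\infty^N,\P)$, give by Cauchy--Schwarz that $F\int Z\,N$ and $F\int Z\,dtd\theta\nu(dx)$ are both integrable, and after the shift the $\eps^+$-image satisfies the same bound since $Z$ is predictable in the Malliavin sense ($D Z_{(t,\theta,x)} = 0$). Once this integrability is in place, the computation above is purely algebraic and gives the formula. I would then briefly note that the result is standard and can also be found in \cite{Picard_French_96} (see the remark following Théorème 1), so it suffices to record how it applies in our precise $(t,\theta,x)$-setup.
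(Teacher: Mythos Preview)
The paper does not supply its own proof of this proposition: it is stated with the attribution ``See \textit{e.g.} \cite{Picard_French_96}'' and immediately followed by Section~\ref{section:rep}, so there is nothing to compare your argument against line by line. Your derivation via the Mecke formula is correct and is precisely the standard route (and the one underlying \cite[Remarque~1]{Picard_French_96}): apply Mecke to $g((t,\theta,x),\omega)=F(\omega)Z_{(t,\theta,x)}(\omega)$, use $Z\in\mathcal I$ to drop the shift on $Z$, decompose $F\circ\eps^+_{(t,\theta,x)}=F+D_{(t,\theta,x)}F$, and cancel the compensator. The integrability justification you sketch (Cauchy--Schwarz using $F\in L^2$ and the two moment conditions defining $\mathcal S$) is adequate for the purposes of this paper.
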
 
 
\subsection{Representation of the compound Hawkes process}
\label{section:rep}

We first recall the definition of a Hawkes process. 

\begin{definition}[Standard Hawkes process, \cite{Hawkes}]
\label{def:standardHawkes}
Let $\mu>0$ and $\Phi:\real_+ \to \real_+$ be a bounded non-negative map with $\|\Phi\|_1 :=\int_0^{+\infty} \Phi(u) du<1$. A standard Hawkes process $H:=(H_t)_{t \geq 0}$ with parameters $\mu$ and $\Phi$ is a counting process such that   
\begin{itemize}
\item[(i)] $H_0=0,\quad \P-a.s.$,
\item[(ii)] its ($\mathbb{F}^N$-predictable) intensity process is given by
$$\lambda_t:=\mu + \int_{(0,t)} \Phi(t-s) dH_s, \quad t\geq 0,$$
that is for any $0\leq s \leq t $ and $A \in \mathcal{F}^N_s$,
$$ \E\left[\textbf{1}_A (H_t-H_s) \right] = \E\left[\int_{(s,t]} \textbf{1}_A \lambda_r dr \right].$$
\end{itemize}
\end{definition}
\noindent This definition can be generalized as follows. 

\begin{definition}[Generalized Hawkes process]
\label{def:Hawkes}
Let $v \geq 0$, $h^v$ be a $\mathcal F_v^N$-measurable random variable with valued in $\mathbb N$, $\mu^v:=(\mu^v(t))_{t\geq v}$ a positive map such that $\mu^v(t)$ is $\mathcal F_v^N$-measurable for any $t\geq v$, and $\Phi:\real_+ \to \real_+$ be a bounded non-negative map with $\|\Phi\|_1 <1$. A Hawkes process on $[v,+\infty)$ with parameters $\mu^v$, $h^v$ and $\Phi:\real_+ \to \real_+$ is a ($\mathbb{F}^N$-adapted) counting process $H:=(H_t)_{t\geq v}$ such that 
\begin{itemize}
\item[(i)] $H_v=h^v,\quad \P-a.s.$,
\item[(ii)] its ($\mathbb{F}^N$-predictable) intensity process is given by
$$\lambda_t:=\mu^v(t) + \int_{(v,t)} \Phi(t-s) dH_s, \quad t\geq v,$$
that is for any $v\leq s \leq t $ and $A \in \mathcal{F}^N_s$, 
$$ \E\left[\textbf{1}_A (H_t-H_s) \vert \mathcal F_v\right] = \E\left[\int_{(s,t]} \textbf{1}_A \lambda_r dr \Big\vert \mathcal F_v \right].$$
\end{itemize}
\end{definition}

\begin{definition}[Compound Hawkes process]
\label{definition:compound}
Let $\mu>0$, $\Phi:\real_+ \to \real_+$ be a bounded non-negative map with $\|\Phi\|_1 <1$ and $(Y_i)_{i\geq 1}$ be iid random variables, with common distribution $\nu$, and independent of the Hawkes process $H$ with parameters $\mu$ and $\Phi$. We name compound Hawkes process  $X:=(X_t)_{t \geq 0}$ a stochastic process with representation : 
\begin{equation}
\label{eq:compound}
X_t = \sum_{i=1}^{H_t} Y_i, \quad t \geq 0.
\end{equation}
\end{definition}
\noindent  
We now represent a Hawkes process (and a compound Hawkes process) as the unique solution to an SDE driven by $N$. This representation relies on the "Poisson embedding" (or "Thinning Algorithm" (see \textit{e.g.} \cite{Bremaud_Massoulie,Costa_etal,Daley_VereJones,Ogata} and references therein). The next result is an extension of \cite[Theorem 3.3]{Hillairet_Reveillac_Rosenbaum}.

\begin{theorem}[See \cite{Hillairet_Reveillac_Rosenbaum}]
\label{th:HRR}
Let $\mu >0$ and $\Phi:\real_+ \to \real_+$ such that $\|\Phi\|_1<1$. The SDE below admits a unique\footnote{we refer to \cite[Theorem 3.3]{Hillairet_Reveillac_Rosenbaum} for a precise statement about uniqueness} solution $(X,H,\lambda)$ with $H$ (resp. $\lambda$) $\mathbb F^N$-adapted (resp. $\mathbb F^N$-predictable) 
\begin{equation}
\label{eq:H}
\left\lbrace
\begin{array}{l}
X_t = \displaystyle{\int_{(0,t]\times \real_+\times \real} x \textbf{1}_{\{\theta \leq \lambda_s\}} N(ds,d\theta,dx)}, \quad t \geq 0, \\\\
H_t = \displaystyle{\int_{(0,t]\times \real_+\times \real} \textbf{1}_{\{\theta \leq \lambda_s\}} N(ds,d\theta,dx)},\quad t \geq 0, \\\\
\lambda_t = \mu + \int_{(0,t)} \Phi(t-u) d H_u,\quad t \geq 0 .
\end{array}
\right.
\end{equation}
In addition, $(H,\lambda)$ is a Hawkes process in the sense of Definition \ref{def:standardHawkes}. 
We set $\mathbb F^H:=(\mathcal F_t^H)_{t \geq 0}$ (respectively $\mathbb F^X:=(\mathcal F_t^X)_{t \geq 0}$) the natural filtration of $H$ (respectively of $X$) and $\mathcal F_\infty^H:=\lim_{t\to+\infty} \mathcal F_t^H$ (respectively $\mathcal F_\infty^X:=\lim_{t\to+\infty} \mathcal F_t^X$).  Obviously $\mathcal F_t^H \subset \mathcal F_t^X \subset \mathcal F_t^N$ as $H$ is completely determined by the jump times of $H$ which are exactly those of $X$.\\
Finally, $X$ is a compound Hawkes process in the sense of Definition \ref{definition:compound}.
 \end{theorem}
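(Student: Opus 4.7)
The strategy is to reduce the triple $(X,H,\lambda)$ to the pair $(H,\lambda)$ handled by the companion paper, and then derive the marked (compound) structure by invoking the product form of the intensity measure of $N$.

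First, I would project out the mark variable. Since $\nu(\real)=1$, the random measure $\tilde N(ds,d\theta):= N(ds,d\theta,\real)$ is a Poisson measure on $\real_+^2$ with intensity $ds\,d\theta$, adapted to $\mathbb F^N$. Because the integrand $\textbf{1}_{\{\theta\leq \lambda_s\}}$ does not involve the $x$-variable, the last two equations of (\ref{eq:H}) are equivalent to the system
\begin{equation*}
H_t=\int_{(0,t]\times \real_+}\textbf{1}_{\{\theta\leq \lambda_s\}}\tilde N(ds,d\theta),\qquad \lambda_t=\mu+\int_{(0,t)}\Phi(t-u)\,dH_u,
\end{equation*}
to which \cite[Theorem 3.3]{Hillairet_Reveillac_Rosenbaum} applies directly and yields a unique solution with $H$ that is $\mathbb F^N$-adapted and $\lambda$ that is $\mathbb F^N$-predictable. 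This also provides the Hawkes property of $(H,\lambda)$ in the sense of Definition \ref{def:standardHawkes}: the $\mathbb F^N$-compensator of $H$ equals $\int_0^{\cdot}\lambda_s\,ds$, since the compensator of $\tilde N$ is $ds\,d\theta$ and the indicator $\textbf{1}_{\{\theta\leq \lambda_s\}}$ is predictable.

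Next, with $\lambda$ at hand, the first equation of (\ref{eq:H}) becomes an unambiguous definition of $X$: the integrand $x\,\textbf{1}_{\{\theta\leq\lambda_s\}}$ is $\mathbb F^N$-predictable, so the integral exists as a pathwise Stieltjes sum over the finitely many atoms of $N$ on $[0,t]\times [0,\|\Phi\|_\infty\vee \mu+\cdots]\times\real$ that satisfy the thinning constraint. Uniqueness of the full triple $(X,H,\lambda)$ then follows from the uniqueness of $(H,\lambda)$.

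It remains to verify that the $X$ so constructed coincides in law with the compound Hawkes process of Definition \ref{definition:compound}. Let $(T_i)_{i\geq 1}$ be the jump times of $H$; each $T_i$ corresponds to a unique atom $(T_i,\Theta_i,Y_i)$ of $N$ with $\Theta_i\leq \lambda_{T_i}$, and $X_{T_i}-X_{T_i-}=Y_i$, so that $X_t=\sum_{i=1}^{H_t}Y_i$. The key step is to argue that the marks $(Y_i)_{i\geq 1}$ are i.i.d.\ with law $\nu$ and independent of $H$. This is the marking theorem for Poisson random measures: because the intensity of $N$ factorises as $(dt\,d\theta)\otimes \nu$, conditionally on the projection $\tilde N$ of $N$ onto $\real_+^2$ (which generates $(H,\lambda)$), the $x$-marks of the atoms of $N$ are i.i.d.\ with law $\nu$ and independent of everything else. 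Restricting to those atoms satisfying the thinning condition $\theta\leq\lambda_s$—a $\tilde N$-measurable selection—preserves the conditional i.i.d.\ $\nu$ structure of the marks, so $(Y_i)_{i\geq 1}$ is independent of $H$ with common distribution $\nu$.

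The main obstacle in carrying this out rigorously is the last step: one must justify the conditional marking argument carefully, since the selection of atoms depends on the process $\lambda$ that is itself constructed from $\tilde N$. The cleanest route is to condition on $\mathcal F_\infty^H$ and use the disintegration $N=\sum_i \delta_{(T_i,\Theta_i,Y_i)}+N'$ where $N'$ collects the atoms rejected by the thinning, together with the fact that $\nu$ does not charge $\{0\}$ so that the marks of $X$ are well-defined. The remaining statements (the filtration inclusions $\mathcal F_t^H\subset\mathcal F_t^X\subset \mathcal F_t^N$) are then immediate since the jump times of $H$ and $X$ coincide and the sizes $Y_i$ are recorded by $X$ but not by $H$.
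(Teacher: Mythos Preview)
The paper does not supply a proof of this theorem; it is stated with the attribution ``See \cite{Hillairet_Reveillac_Rosenbaum}'' and the footnote refers to \cite[Theorem~3.3]{Hillairet_Reveillac_Rosenbaum} for the precise uniqueness statement. The paper merely remarks that the result is an \emph{extension} of that theorem to the compound case.

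Your proposal is therefore not competing with a proof in the paper but rather filling in what the paper leaves to the cited reference. The route you take---project out the mark to recover the two-dimensional Poisson measure $\tilde N$, invoke \cite[Theorem~3.3]{Hillairet_Reveillac_Rosenbaum} for existence and uniqueness of $(H,\lambda)$, then read off $X$ and appeal to the marking theorem for the product intensity $(dt\,d\theta)\otimes\nu$---is exactly the natural way to carry out this extension, and it is correct. Your caveat about the conditional marking step is well placed: the selection rule $\theta\le\lambda_s$ is $\sigma(\tilde N)$-measurable, so conditioning on $\tilde N$ (equivalently on $\mathcal F_\infty^H$) makes the retained marks i.i.d.\ $\nu$ and independent of the selection, which is precisely what Definition~\ref{definition:compound} requires. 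Nothing is missing.
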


\begin{remark}
The definition above is based on the following notation. Indeed, the rigorous definition of $H$ is given as : 
$$ H(\mathcal T):=\int_{\real_+\times \real_+\times \real} \textbf{1}_{\{s \in \mathcal T\}} \textbf{1}_{\{\theta \leq \lambda_s\}} N(ds,d\theta,dx),\quad \mathcal T \in \mathcal B(\real_+),$$
$$ X(\mathcal T):=\int_{\real_+\times \real_+\times \real} x \textbf{1}_{\{s \in \mathcal T\}} \textbf{1}_{\{\theta \leq \lambda_s\}} N(ds,d\theta,dx),\quad \mathcal T \in \mathcal B(\real_+).$$
Then by convention we set  $H_t := H([0,t])$, $X_t:=X([0,t])$ and $H_{t-}:=H([0,t))$, $X_{t-}:=X([0,t))$ for any $t >0$, with $H_0:=X_0:=0$.
\end{remark}

\begin{remark}
{We have decided to include in our analysis the case of compound Hawkes processes and not simply the one of Hawkes processes. By choosing $\nu(dx) = \delta_{1}(dx)$ (the Dirac measure concentrated at $x=1)$ one obviously recover the construction of the Hawkes process as in \cite{Hillairet_Reveillac_Rosenbaum} and in that case one can just consider $N$ to be a Poisson measure on $\real_+^2$.}
\end{remark}

\subsection{Malliavin analysis of the compound Hawkes process}
\label{section:MalliavinHawkes}

We now wish to describe the impact of the Malliavin derivative on the Hawkes process. Once again this material relies on the one provided in \cite{Hillairet_Reveillac_Rosenbaum}. The objective of this section is to derive an integration by parts formula for the Hawkes functionals as Theorem \ref{th:IPPH}.

\begin{lemma}
\label{lemma:TempDH}
Let $t$ and $v$ in $\real_+$, $(\theta,x)$ and $ (\theta_0,x_0)$ in $\real_+\times \real$, it holds that :
\begin{align*}
& \quad \textbf{1}_{\{\theta \leq \lambda_t\}} (X_v\circ \eps_{(t,\theta,x)}^+,H_v\circ \eps_{(t,\theta,x)}^+,\lambda_v\circ \eps_{(t,\theta,x)}^+)_{v\geq 0} \\
&= \textbf{1}_{\{\theta_0 \leq \lambda_t\}} (X_v\circ \eps_{(t,\theta_0,x)}^+,H_v\circ \eps_{(t,\theta_0,x_0)}^+,\lambda_v\circ \eps_{(t,\theta_0,x_0)}^+)_{v\geq 0}.
\end{align*}
\end{lemma}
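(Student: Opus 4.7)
My plan is to invoke the strong uniqueness of the SDE~(\ref{eq:H}) in Theorem~\ref{th:HRR} and to describe the shifted triple $(X\circ\eps^+_{(t,\theta,x)},H\circ\eps^+_{(t,\theta,x)},\lambda\circ\eps^+_{(t,\theta,x)})$ as the unique solution of the same SDE driven by the shifted measure $N\circ\eps^+_{(t,\theta,x)}$, which differs from $N$ only by the addition of the single atom at $(t,\theta,x)$. With this reformulation, I will compare the triples obtained with $(\theta,x)$ and with $(\theta_0,x_0)$ regime by regime in $v$ and concatenate.

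For $v<t$, Lemma~\ref{lemma:mesur} applied with shift time $t>v$ shows that the $\mathcal F^N_v$-measurable random variables $H_v,X_v$ and the $\mathcal F^N_{v-}$-measurable $\lambda_v$ are invariant under both shifts; both triples reduce to the unshifted $(X_v,H_v,\lambda_v)$. At $v=t$, the predictability of $\lambda$ gives $\lambda_t\circ\eps^+_{(t,\theta,x)}=\lambda_t=\lambda_t\circ\eps^+_{(t,\theta_0,x_0)}$, so the two indicators appearing in the statement are precisely the acceptance indicators of the inserted atom under each shift; on the event where both fire, the jump produced at $t$ is of size $1$ for $H$ and of size $x$ for $X$, which explains why the right-hand $X$-component keeps the mark $x$ while the $H$- and $\lambda$-components are written with the dummy mark $x_0$ to which they are indifferent.

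For $v>t$, the key observation is that $N\circ\eps^+_{(t,\theta,x)}$ and $N\circ\eps^+_{(t,\theta_0,x_0)}$ both coincide with $N$ on $(t,\infty)\times\real_+\times\real$, since the single added atom lies at time $t$. Strong uniqueness of the SDE restarted at $t$ then exhibits the post-$t$ trajectory as a deterministic functional of $N\vert_{(t,\infty)\times\real_+\times\real}$ together with the state at time $t$, namely the jump times of $H$ up to and including $t$ (which determine $\lambda$ on $(t,\infty)$ via convolution with $\Phi$) and the marks carried by $X$ at those times. On the intersection of the two acceptance events this state is identical under the two shifts — same pre-$t$ jump times augmented by $\{t\}$, and the added mark at $t$ is the common $x$ — so the two post-$t$ trajectories coincide. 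Concatenating the three regimes yields the identity.

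The main obstacle sits in the $v>t$ regime: one could worry that the thinning cascade is sensitive to the specific shift parameters, since the new jump at $t$ raises $\lambda$ on $(t,\infty)$ through $\Phi(\cdot-t)$ and may accept atoms of $N$ after $t$ that would otherwise have been rejected, further raising $\lambda$, and so on. The content of the lemma is that this cascade is driven entirely by the time $t$ of the inserted atom and by $N\vert_{(t,\infty)\times\real_+\times\real}$; the auxiliary coordinates $\theta,\theta_0,x_0$ never re-enter the dynamics after $t$, and strong uniqueness of the SDE is precisely what packages this intuition into the claimed identity.
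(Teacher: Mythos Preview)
Your proof is correct and follows essentially the same approach as the paper: both argue that on $\{\theta\le\lambda_t\}\cap\{\theta_0\le\lambda_t\}$ the shifted triples satisfy the same pathwise SDE after time $t$ (with identical data at $t$ and identical driving measure $N$ on $(t,\infty)\times\real_+\times\real$), and then invoke the strong uniqueness of Theorem~\ref{th:HRR}. The paper carries this out by writing the post-$t$ integrals for $H_v\circ\eps^+$, $X_v\circ\eps^+$, $\lambda_v\circ\eps^+$ explicitly and observing that $\theta_0$ enters only through $\textbf{1}_{\{\theta_0\le\lambda_t\}}$ (and $x_0$ only through $X$), whereas you phrase the same conclusion more abstractly via the ``state at $t$'' plus $N\vert_{(t,\infty)}$, but the substance is identical.
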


\begin{proof}
The proof relies on the very definition of the shift operator $\eps_{(t,\theta,x)}^+$ and on the structure of the Hawkes process. Indeed, the intensity is impacted (on $(t,+\infty)$) by the addition of a jump to $H$ at time $t$, but the value of this jump for $H$ is equal to $1$, and this  for any $\theta$ such that $\theta \leq \lambda_t$. In other words : for $v \geq t$ on $\{\theta \leq \lambda_t\}$
\begin{align*}
H_v \circ \eps_{(t,\theta_0,x_0)}^+ &=H_{t-} + \left(\int_{[t,v]\times \real_+\times \real} \textbf{1}_{\{\theta \leq \lambda_u\}} N(d\theta,du,dy)\right) \circ \eps_{(t,\theta_0,x_0)}^+ \\
&{=H_{t-} + \left(\int_{(t,v]\times \real_+\times \real} \textbf{1}_{\{\theta \leq \lambda_u\}} N(d\theta,du,dy)\right) \circ \eps_{(t,\theta_0,x_0)}^+} \\
&=H_{t-} + \textbf{1}_{\{\theta_0 \leq \lambda_t\}} + \int_{(t,v]\times \times \real} \int_{\real_+} \textbf{1}_{\{\theta \leq \lambda_u \circ \eps_{(t,\theta_0,x_0)}^+\}} N(d\theta,du,dy),
\end{align*}
and $H_v \circ \eps_{(t,\theta_0,x_0)}^+ = H_v$ for any $v<t$. Note that it is part of the definition of the shift operator (see Definition \ref{definition:shift}) to remove the possible natural jump at time $t$, which explain why we move from the integral $\int_{[t,v]}$ to $\int_{(t,v]}$ in the computations above. Similarly,  {for $v \geq t$ }
\begin{align*}
X_v \circ \eps_{(t,\theta_0,x_0)}^+ &=X_{t-} + \left(\int_{(t,v]\times \real_+\times \real} y \textbf{1}_{\{\theta \leq \lambda_u\}} N(d\theta,du,dy)\right) \circ \eps_{(t,\theta_0,x_0)}^+ \\
&=X_{t-} + x_0 \textbf{1}_{\{\theta_0 \leq \lambda_t\}} + \int_{(t,v] \times \real} \int_{\real_+} y \textbf{1}_{\{\theta \leq \lambda_u \circ \eps_{(t,\theta_0,x_0)}^+\}} N(d\theta,du,dy),
\end{align*}
and $X_v \circ \eps_{(t,\theta_0,x_0)}^+ = X_v$ for any $v<t$. In a similar fashion,
$\lambda_v \circ \eps_{(t,\theta_0,x_0)}^+ = \lambda_v$ for any $v \leq t$ and for $v>t$,
\begin{align*}
\lambda_v \circ \eps_{(t,\theta_0,x_0)}^+ 
&=\left(\mu + \int_{(0,t)} \Phi(v-u) d H_u + \int_{[t,v)} \Phi(v-u) d H_u\right) \circ \eps_{(t,\theta_0,x_0)}^+ \\
&=\mu + \int_{(0,t)} \Phi(v-u) d H_u + \Phi(v-t) + \int_{(t,v)} \Phi(v-u) d (H_u\circ \eps_{(t,\theta_0,x_0)}^+).
\end{align*}
In other words, $(X \circ \eps_{(v,\theta_0,x_0)}^+,H \circ \eps_{(v,\theta_0,x_0)}^+,\lambda \circ \eps_{(v,\theta_0,x_0)}^+)$ solves the same (pathwise and in the SDE sense) equation for any $\theta_0$ such that $\theta_0 \leq \lambda_t$.
\end{proof}

\begin{prop}
\label{prop:TempDH}
Let $F$ be a $\mathcal F_\infty^X$-measurable random variable. Then for any $t\geq 0$, and $\forall \theta, \theta_0\geq 0$, $\forall x \in \real$,
$$ \textbf{1}_{\{\theta \leq \lambda_t\}} D_{(t,\theta,x)} F = \textbf{1}_{\{\theta_0 \leq \lambda_t\}} D_{(t,\theta_0,x)} F, \quad \P-a.s..$$
\end{prop}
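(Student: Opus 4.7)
The plan is to leverage the $\mathcal F_\infty^X$-measurability of $F$ to regard $F$ as a measurable functional of the path of $X$, and then to invoke Lemma \ref{lemma:TempDH}, which precisely controls how that path transforms under the shift $\eps^+_{(t,\theta,x)}$. Concretely, I would proceed by a functional monotone class argument: cylindrical random variables of the form $F = \phi(X_{v_1},\ldots,X_{v_n})$, with $n\geq 1$, $0 \leq v_1 < \cdots < v_n$, and $\phi:\real^n\to\real$ bounded measurable, generate $\mathcal F_\infty^X$, and both sides of the identity to be proven are linear in $F$ and stable under bounded monotone limits (by dominated convergence together with Lemma \ref{lemma:mesur}), so it suffices to treat this cylindrical case.

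For such an $F$, the definition of the shift gives
$$F \circ \eps^+_{(t,\theta,x)} \;=\; \phi\bigl(X_{v_1}\circ \eps^+_{(t,\theta,x)}, \ldots, X_{v_n}\circ \eps^+_{(t,\theta,x)}\bigr),$$
and Lemma \ref{lemma:TempDH}, applied componentwise at each $v_i$ with the same mark $x$ kept on both sides, yields
$$\textbf{1}_{\{\theta \leq \lambda_t\}}\, F \circ \eps^+_{(t,\theta,x)} \;=\; \textbf{1}_{\{\theta_0 \leq \lambda_t\}}\, F \circ \eps^+_{(t,\theta_0,x)}.$$
To transfer this from shifted values of $F$ to the Malliavin derivatives themselves, I would use the complementary observation that whenever $\theta > \lambda_t$ the added point $(t,\theta,x)$ is rejected by the thinning defining $X$: since $\lambda_t$ is $\mathbb{F}^N$-predictable one has $\lambda_t\circ \eps^+_{(t,\theta,x)} = \lambda_t$, hence $X\circ \eps^+_{(t,\theta,x)} = X$ pathwise on $\{\theta > \lambda_t\}$ and therefore $D_{(t,\theta,x)}F = 0$ there (and symmetrically for $\theta_0$). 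Subtracting $F$ from both sides of the displayed equality, in the form $D_{(t,\theta,x)}F = F\circ \eps^+_{(t,\theta,x)} - F$, then produces the stated identity.

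The main obstacle I expect is the bookkeeping between the two indicators in this last step: one must verify both that the nonzero parts on the common event $\{\theta \leq \lambda_t\}\cap\{\theta_0\leq \lambda_t\}$ coincide — this is direct from Lemma \ref{lemma:TempDH} — and that off this event the relevant Malliavin derivatives vanish, the latter resting on the predictability of $\lambda$ rather than on the lemma itself. The functional monotone class reduction is otherwise routine, once one has noted via Lemma \ref{lemma:mesur} that $\eps^+$ acts coherently on $\mathcal F_\infty^X$-measurable variables, so that the stability under bounded monotone limits of the class of $F$ satisfying the identity is automatic.
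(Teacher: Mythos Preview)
Your approach is essentially the paper's: both reduce to cylindrical functionals of the path and then invoke Lemma \ref{lemma:TempDH}, with the paper using approximation by functions of $(X_{t_1},H_{t_1},\lambda_{t_1}),\ldots,(X_{t_n},H_{t_n},\lambda_{t_n})$ while you use functions of $X$ alone together with a monotone-class argument. Your explicit handling of the event $\{\theta>\lambda_t\}$ via predictability of $\lambda$ (the added mark being rejected by the thinning) is a detail the paper leaves implicit, but the route is the same.
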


\begin{proof}
This follows from Lemma \ref{lemma:TempDH} and the fact that any $\mathcal F_T^X$-measurable random variable $F$ is a limit of a random variable of the form $\varphi((X_{t_1},H_{t_1},\lambda_{t_1}),\cdots,(X_{t_n},H_{t_n},\lambda_{t_n}))$, with : $0\leq t_1<\ldots<t_n$, $n\geq 1$ and $\varphi$ a Borelian map from $\real^{3n}$ to $\real$. More precisely, $H$ is itself a functional of $X$ as it is a counting process whose jumps coincide with those of $X$. But the fact of distinguishing $X$ and $H$ allows one to note for instance that the Malliavin derivative of a functional of $H$ does not depend on the $x$ variable for instance.  
\end{proof}
\noindent This motivates the introduction of the following notation. 
\begin{definition}
\label{def:DH}
Let $t\geq 0$. For a $\mathcal F_\infty^X$-measurable  random variable $F$, we set 
$$ D_{(t,\lambda_t,x)} F:= \textbf{1}_{\{\theta \leq \lambda_t\}} D_{(t,\theta,x)} F, \quad \forall (\theta,x) \in \real_+\times \real.$$
\end{definition}
\noindent As a consequence, if  $(F_s)_{s \geq 0}$ is a $\mathbb F^X$-measurable (resp. predictable) process, then   $D_{(t,\lambda_t,x)} F_s=0$ for $s<t$ (resp. for $s\leq t$)  since $D_{(t,\lambda_t,x)} F_s = F_s\circ \eps_{(t,\lambda_t,x)}^+ - F_s$  and using   Lemma \ref{lemma:mesur}. 

\begin{remark}
Let $F$ be a functional of $(H,\lambda)$. Then for any $t\geq 0$, and $\forall \theta, \theta_0\geq 0$, $\forall x, y \in \real$,
$$ \textbf{1}_{\{\theta \leq \lambda_t\}} D_{(t,\theta,x)} F = \textbf{1}_{\{\theta_0 \leq \lambda_t\}} D_{(t,\theta_0,y)} F, \quad \P-a.s..$$
In other words, in that case, the Malliavin derivative does not depend on the variable $x$.
\end{remark}
\noindent With this notation at hand, we determine the Malliavin derivative of the compound Hawkes process. 
\begin{prop}
\label{prop:DescDH}
Let $t\geq 0$ and $x\in \real$. We have
\begin{equation*}
(D_{(t,\lambda_t,x)} X_s,D_{(t,\lambda_t,x)} H_s,D_{(t,\lambda_t,x)} \lambda_s)  = \left\lbrace
\begin{array}{l}
(x+\hat X_s^t,1+\hat H_s^t,\hat \lambda_s^t),   \quad   s\geq t, \\\\
(0,0,0), \quad \quad \quad \quad  s<t
\end{array}
\right.
\end{equation*}
where the equality is understood pathwise and in the SDE sense and 
where $(\hat X_s^t,\hat H_s^t,\hat \lambda_s^t)_{s\geq t}$ is the unique solution to the SDE 
\begin{equation}
\label{eq:DH}
\left\lbrace
\begin{array}{l}
\hat H_s^{t} = \displaystyle{\int_{(t,s]\times \real_+\times \real} \textbf{1}_{\{\lambda_u \leq \theta \leq \lambda_u+ \hat \lambda_u^{t}\}} N(du,d\theta,dy)}, \quad s \geq t, \\\\
\hat X_s^{t} = \displaystyle{\int_{(t,s]\times \real_+\times \real} y \textbf{1}_{\{\lambda_u \leq \theta \leq \lambda_u+ \hat \lambda_u^{t}\}} N(du,d\theta,dy)}, \quad s \geq t,\\\\
\hat \lambda_s^{t} =  \Phi(s-t) + \displaystyle{\int_{(t,s)} \Phi(s-u) d\hat H_u^{t}}, \quad s>t, \; \hat \lambda_t^{t}=0.
\end{array}
\right.
\end{equation}
In addition, $(\hat H_s^t,\hat \lambda_s^t)_{ s \in [t,+\infty)}$ is a generalized Hawkes process, with initial intensity that is not bounded away from $0$.
\end{prop}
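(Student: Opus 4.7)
The plan is to split according to whether $s<t$ or $s \geq t$. For $s < t$, the processes $X_s$, $H_s$ and $\lambda_s$ are $\mathcal{F}_t^N$--measurable (even $\mathcal F_{t-}^N$--measurable for $\lambda_s$), so by Lemma \ref{lemma:mesur} applied with the shift $\eps_{(t,\theta,x)}^+$ we get $F \circ \eps_{(t,\theta,x)}^+ = F$ for $F \in \{X_s,H_s,\lambda_s\}$, hence $D_{(t,\theta,x)} F = 0$ and a fortiori $D_{(t,\lambda_t,x)} F = 0$ in the sense of Definition \ref{def:DH}. This settles the second line.

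For $s \geq t$, I would apply the shift operator $\eps_{(t,\theta_0,x)}^+$ to the SDE system (\ref{eq:H}) on the event $\{\theta_0 \leq \lambda_t\}$, reusing the expansions already derived inside the proof of Lemma \ref{lemma:TempDH}. Setting
\begin{align*}
\wh H_s^t &:= H_s \circ \eps_{(t,\theta_0,x)}^+ - H_s - 1, \\
\wh X_s^t &:= X_s \circ \eps_{(t,\theta_0,x)}^+ - X_s - x, \\
\wh \lambda_s^t &:= \lambda_s \circ \eps_{(t,\theta_0,x)}^+ - \lambda_s,
\end{align*}
the expansions yield, for $s > t$,
\begin{align*}
\wh H_s^t &= \int_{(t,s]\times \real_+\times \real} \bigl(\ind{\theta \leq \lambda_u + \wh \lambda_u^t} - \ind{\theta \leq \lambda_u}\bigr) N(du,d\theta,dy), \\
\wh X_s^t &= \int_{(t,s]\times \real_+\times \real} y \bigl(\ind{\theta \leq \lambda_u + \wh \lambda_u^t} - \ind{\theta \leq \lambda_u}\bigr) N(du,d\theta,dy), \\
\wh \lambda_s^t &= \Phi(s-t) + \int_{(t,s)} \Phi(s-u)\, d\wh H_u^t,
\end{align*}
together with $\wh \lambda_t^t = 0$. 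The next step is to verify that the indicator difference simplifies to $\ind{\lambda_u < \theta \leq \lambda_u + \wh \lambda_u^t}$: this requires $\wh \lambda_u^t \geq 0$, which I would establish by a Picard-iteration argument on the fixed-point representation of $\wh\lambda$, using $\Phi \geq 0$ and $\|\Phi\|_1 < 1$ (so that the iteration that builds the unique solution of (\ref{eq:DH}) starts from $\Phi(s-t) \geq 0$ and preserves non-negativity at each step).

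Existence and uniqueness of the solution to (\ref{eq:DH}) is essentially a copy of Theorem \ref{th:HRR}: the system has the same structure as (\ref{eq:H}), with $\mu$ replaced by the $\mathcal F_t^N$-measurable deterministic-in-shape baseline $s\mapsto \Phi(s-t)$ on $[t,+\infty)$ and starting from the null counting measure at $t$. The condition $\|\Phi\|_1 < 1$ is again what guarantees a unique strong solution by the same contraction argument, so $\wh H^t$, $\wh X^t$, $\wh\lambda^t$ are unambiguously defined. Once this is in hand, the fact that $(\wh H^t_s,\wh \lambda^t_s)_{s\geq t}$ is a generalized Hawkes process in the sense of Definition \ref{def:Hawkes} follows directly by reading off $h^t = 0$ and $\mu^t(s) = \Phi(s-t)$, which is an $\mathcal F_t^N$-measurable (in fact deterministic) function but not bounded away from $0$ in general.

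The main obstacle I anticipate is the non-negativity of $\wh \lambda^t$ and the identification step that turns the difference of indicators into a clean rectangular strip $\{\lambda_u < \theta \leq \lambda_u + \wh \lambda_u^t\}$: this is what makes the equations in (\ref{eq:DH}) truly a coupled Hawkes-type system driven by the \emph{same} Poisson measure $N$, rather than a signed perturbation. A clean way around this is to first \emph{define} $(\wh H^t,\wh X^t,\wh \lambda^t)$ as the unique solution of (\ref{eq:DH}), then to check pathwise (using the Poisson embedding of Theorem \ref{th:HRR} on $[t,+\infty)$ initialised at $(H_{t-}+1, X_{t-}+x, \lambda_t + \Phi(\cdot - t) + \cdots)$) that $(X\circ \eps + \wh X^t, H\circ \eps + \wh H^t, \lambda \circ \eps + \wh \lambda^t)$ and $(X\circ\eps,H\circ\eps,\lambda\circ\eps)$ solve the same SDE driven by $N$ starting at time $t$ from $(X_{t-}+x, H_{t-}+1, \lambda_t+\Phi(\cdot - t))$, so that uniqueness in Theorem \ref{th:HRR} forces the desired identification. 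This strategy bypasses the a priori sign issue for $\wh \lambda^t$ and directly yields the claimed Malliavin derivatives on $\{\theta_0 \leq \lambda_t\}$.
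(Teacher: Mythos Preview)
Your proposal is correct and follows essentially the same route as the paper: both compute the shifted processes via the expansions of Lemma \ref{lemma:TempDH}, take differences, and identify the resulting system with the SDE (\ref{eq:DH}), invoking \cite[Theorem 3.3]{Hillairet_Reveillac_Rosenbaum} for uniqueness. You are more explicit than the paper about the non-negativity of $\hat\lambda^t$ (which the paper silently uses when rewriting the difference of indicators as $\textbf{1}_{\{\lambda_u\leq\theta\leq\lambda_u\circ\eps^+\}}$); this indeed follows from pathwise monotonicity of the Hawkes SDE under addition of a jump when $\Phi\geq 0$, and your alternative strategy of defining $(\hat H^t,\hat X^t,\hat\lambda^t)$ via (\ref{eq:DH}) first and then identifying by uniqueness is a clean way to sidestep the question entirely.
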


\begin{proof}
Fix $t \geq 0$ and $x\in \real$. According to Definition \ref{def:DH}, $(D_{(t,\lambda_t,x)} X,D_{(t,\lambda_t,x)} H,D_{(t,\lambda_t,x)} \lambda)$ is given as $D_{(t,\lambda_t,x)} X_s = X_s\circ \eps_{(t,\lambda_t,x)}^+ - X_s$, $D_{(t,\lambda_t,x)} H_s = H_s\circ \eps_{(t,\lambda_t,x)}^+ - H_s$, $D_{(t,\lambda_t,x)} \lambda_s = \lambda_s\circ \eps_{(t,\lambda_t,x)}^+ - \lambda_s$. Besides,  $D_{(t,\lambda_t,x)} X_s=D_{(t,\lambda_t,x)} H_s =0$ for $s<t$ and $D_{(t,\lambda_t,x)} \lambda_s =0$ for $s\leq t$. Let $s\geq t$, according to Lemma \ref{lemma:TempDH} we have that 
\begin{align*}
&D_{(t,\lambda_t,x)} H_s \\
&= H_{t-} + 1 + \int_{(t,s]\times \real_+\times \real} \textbf{1}_{\{\theta \leq \lambda_u \circ \eps_{(t,\lambda_t,x)}^+\}} N(d\theta,du,dy) - \left(H_t + \int_{(t,s]\times \real_+\times \real} \textbf{1}_{\{\theta \leq \lambda_u \}} N(d\theta,du,dy)\right) \\
&= -(H_t-H_{t-}) +1 + \int_{(t,s] \times \real_+\times \real} \textbf{1}_{\{\lambda_u \leq \theta \leq \lambda_u \circ \eps_{(t,\lambda_t,x)}^+\}} N(d\theta,du,dy) \\
&= 1 + \int_{(t,s]\times \real_+\times \real} \textbf{1}_{\{\lambda_u \leq \theta \leq \lambda_u \circ \eps_{(t,\lambda_t,x)}^+\}} N(d\theta,du,dy), \quad \P-a.s..
\end{align*}
\begin{align*}
& D_{(t,\lambda_t,x)} X_s \\
&= X_{t-} + x + \int_{(t,s]\times \real_+\times \real} \textbf{1}_{\{\theta \leq \lambda_u \circ \eps_{(t,\lambda_t,x)}^+\}} y N(d\theta,du,dy) - \left(X_t + \int_{(t,s]\times \real_+\times \real} \textbf{1}_{\{\theta \leq \lambda_u \}} y N(d\theta,du,dy)\right) \\
&= -(X_t-X_{t-}) + x + \int_{(t,s] \times \real_+\times \real} \textbf{1}_{\{\lambda_u \leq \theta \leq \lambda_u \circ \eps_{(t,\lambda_t,x)}^+\}} y N(d\theta,du,dy) \\
&= x + \int_{(t,s]\times \real_+\times \real} \textbf{1}_{\{\lambda_u \leq \theta \leq \lambda_u \circ \eps_{(t,\lambda_t,x)}^+\}} y N(d\theta,du,dy), \quad \P-a.s..
\end{align*}
Note that we ignore the possible jump at time $t$ of $H$, as it is equal to $0$ $\P$-a.s.. This does not lead to an issue when integrating in $t$ as $\P\left[\{u, \; \Delta_u H \neq 0\}<+\infty\right]=1$ (the same comment apply to $X$). More generally, the Malliavin derivative $DF$ coincides $\P\otimes (dt\otimes d\theta \otimes \nu)$-a.e. with the operator $F \circ \eps_{(t,\theta,x)}^+ - F$ where it is understood that we exclude the possible jump of $N$ at time $t$ which is supported by a set of probability $0$. Similarly, for $s>t$, we have that 
\begin{align*}
D_{(t,\lambda_t,x)} \lambda_s 
&= \Phi(s-t) + \int_{(t,s)} \Phi(s-u) d (H_u\circ \eps_{(t,\lambda_t,x)}^+ - dH_u) \\
&= \Phi(s-t) + \int_{(t,s)} \Phi(s-u) d (D_{(t,\lambda_t,x)} H_u).
\end{align*}
For $s=t$, $D_{(t,\lambda_t,x)} \lambda_s=0$ as $\lambda$ is $\mathbb F^H$-predictable. 
Writing $\lambda_u \circ \eps_{(t,\lambda_t,x)}^+ = \lambda_u + D_{(t,\lambda_t,x)} \lambda_u$ proves that $(D_{(t,\lambda_t,x)} X,D_{(t,\lambda_t,x)} H,D_{(t,\lambda_t,x)} \lambda)$ is solution to SDE (\ref{eq:DH}) which admits a unique solution following \cite[Theorem 3.3]{Hillairet_Reveillac_Rosenbaum}.\\
The last claim follows by proving that $(\hat H^t,\hat \lambda^t)$ is a Hawkes process on $[t,+\infty)$. Indeed, for $t \leq s_1 \leq s_2$, we have that : 
\begin{align*}
\E\left[\hat H_{s_2}^t - \hat H_{s_1}^t\vert \mathcal F_{s_1}^H\right]
&=\E\left[\E\left[\hat H_{s_2}^t - \hat H_{s_1}^t\vert \mathcal F_{s_1}^N\right]\vert \mathcal F_{s_1}^H\right] \\
&=\E\left[\int_{s_1}^{s_2} \E\left[ \int_{\real_+} \textbf{1}_{\{\lambda_u \leq \theta \leq \lambda_u + \hat \lambda^t_u \}} d\theta \vert \mathcal F_{s_1}^N\right]  dt\vert \mathcal F_{s_1}^H\right] \\
&=\int_{s_1}^{s_2} \E\left[ \hat \lambda^t_u \vert \mathcal F_{s_1}^H\right] dt.
\end{align*}
Note that the intensity  $(\hat \lambda_s^t)_{ s \in [t,+\infty)}$ can reach zero.
\end{proof}

\noindent We conclude this section by re-writing the integration by parts formula (\ref{eq:IBPPoisson}) for the Hawkes process.

\begin{theorem}
\label{th:IPPH}
Set $\mathcal Z:=(\mathcal Z_{(t,\theta)})_{(t,\theta) \in \real^2_+}$ the stochastic process defined as 
$$ \mathcal Z_{(t,\theta)}:= \textbf{1}_{\{\theta \leq \lambda_t\}}, \quad (t,\theta) \in \real_+^2.$$
Let $Z:=(Z_{(t,x)})_{(t,x) \in \real_+ \times \real}$ be a $\mathbb F^X$-predictable process satisfying
$$\E\left[\int_{\real_+ \times \real} |Z_{(t,x)}|^2 \lambda_t dt \nu(dx) + \left(\int_{\real_+ \times \real} Z_{(t,x)} \lambda_t dt \nu(dx)\right)^2\right]<\infty.$$ It holds that 
\begin{itemize}
\item[(i)] $Z \mathcal Z=(Z_{(t,x)} \textbf{1}_{\{\theta \leq \lambda_t\}})_{(t,\theta,x)\in \real_+^2\times \real}$ belongs to $\mathcal S$.
\item[(ii)] For any $\mathcal F^X_\infty$-measurable random variable $F$ with $\E[|F|^2]<+\infty$,
\begin{equation}
\label{eq:IBP}
\E\left[F \delta^N(Z \textbf{1}_{\{\theta \leq \lambda_t\}})\right] = \E\left[ \int_{\real_+ \times \real} \lambda_t Z_{(t,x)} D_{(t,\lambda_t,x)} F dt \nu(dx)\right],
\end{equation}
where $D_{(t,\lambda_t,x)} F = D_{(t,\theta,x)} F \textbf{1}_{\{\theta \leq \lambda_t\}}$ for any $\theta \geq 0$, $x\in \real$.
\end{itemize}
\end{theorem}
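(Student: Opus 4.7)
The strategy is to apply the Poisson integration by parts formula (Proposition \ref{prop:IPP}) to the integrand $Z\mathcal Z$ and then to collapse the auxiliary variable $\theta$ using the thinning-invariance property of the Malliavin derivative established in Proposition \ref{prop:TempDH}.

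For claim (i), I would verify the two defining conditions of $\mathcal S$. To see that $Z\mathcal Z \in \mathcal I$, note that $Z_{(t,x)}$ is $\mathcal F_{t-}^X$-measurable by $\mathbb F^X$-predictability and that $\lambda_t$ is $\mathbb F^N$-predictable, so both are measurable with respect to the strict past of $t$. A direct application of Lemma \ref{lemma:mesur} (together with Remark \ref{rem:Nshift} for the contribution of $\lambda_t$) shows that $(Z_{(t,x)} \textbf{1}_{\{\theta \leq \lambda_t\}}) \circ \eps_{(t,\theta,x)}^+ = Z_{(t,x)} \textbf{1}_{\{\theta \leq \lambda_t\}}$, hence $D_{(t,\theta,x)}(Z_{(t,x)} \textbf{1}_{\{\theta \leq \lambda_t\}}) = 0$ $\P \otimes dt \otimes d\theta \otimes \nu$-a.e. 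The two $L^2$ requirements then follow by Fubini and integrating out $\theta$ using $\int_0^{+\infty} \textbf{1}_{\{\theta \leq \lambda_t\}} d\theta = \lambda_t$: the first one gives exactly $\E[\int |Z_{(t,x)}|^2 \lambda_t \, dt \, \nu(dx)]$, and for the second one I would observe that the Stieltjes integral $\int Z_{(t,x)} \textbf{1}_{\{\theta \leq \lambda_t\}} N(dt,d\theta,dx)$ equals $\sum_{i=1}^{+\infty} Z_{(T_i,Y_i)}$ where $(T_i)_i$ are the jump times of $H$, and its second moment is controlled by the standard isometry argument and the finiteness assumption on $Z$.

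For claim (ii), apply Proposition \ref{prop:IPP} to $Z\mathcal Z \in \mathcal S$ to obtain
\begin{equation*}
\E\left[F \delta^N(Z\mathcal Z)\right] = \E\left[\int_{\real_+^2 \times \real} Z_{(t,x)} \textbf{1}_{\{\theta \leq \lambda_t\}} D_{(t,\theta,x)} F \, dt \, d\theta \, \nu(dx)\right].
\end{equation*}
By Proposition \ref{prop:TempDH}, on the event $\{\theta \leq \lambda_t\}$ the Malliavin derivative $D_{(t,\theta,x)} F$ is independent of $\theta$ and coincides with $D_{(t,\lambda_t,x)} F$ of Definition \ref{def:DH}. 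Pulling this $\theta$-free factor out of the $d\theta$-integral and computing $\int_0^{+\infty} \textbf{1}_{\{\theta \leq \lambda_t\}} d\theta = \lambda_t$ yields the announced identity.

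The main delicate point is the annihilation $D_{(t,\theta,x)} \textbf{1}_{\{\theta \leq \lambda_t\}} = 0$: strictly speaking, the shift $\eps_{(t,\theta,x)}^+$ acts precisely at time $t$, whereas Lemma \ref{lemma:mesur} is stated for $v > t$. This is resolved by the fact that, under $\P^N$, the Poisson measure almost surely assigns no atom to any fixed deterministic time, so $\mathcal F_{t-}^N$-measurable quantities are invariant under $\eps_{(t,\theta,x)}^+$ modulo a $\P$-null set, and predictability of $Z$ and $\lambda$ then provides the required equality $\P \otimes dt \otimes d\theta \otimes \nu$-a.e. Apart from this bookkeeping, the proof is a direct combination of the Poisson IBP and the thinning identity for $D$.
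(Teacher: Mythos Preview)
Your proposal is correct and follows essentially the same route as the paper: verify $Z\mathcal Z\in\mathcal S$ via predictability of $Z$ and $\lambda$, apply the Poisson IBP of Proposition \ref{prop:IPP}, and collapse the $\theta$-integral using Proposition \ref{prop:TempDH}/Definition \ref{def:DH}. The only cosmetic difference is that the paper makes the second $L^2$ bound explicit by splitting $\int Z\mathcal Z\,dN$ into its compensated part plus the compensator and invoking the $L^2$-isometry for the predictable martingale term, whereas you summarise this as ``the standard isometry argument''; conversely, you spell out the $\mathcal F_{t-}^N$-measurability issue behind $D_{(t,\theta,x)}\textbf{1}_{\{\theta\le\lambda_t\}}=0$ that the paper states without comment.
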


\begin{proof}
By construction, $\lambda$ is $\mathbb F^H$-predictable. Hence, for any $(t,\theta,x)$ in $\real_+^2\times \real$, $D_{(t,\theta,x)} \textbf{1}_{\{\theta \leq \lambda_t\}} =0$. So, $\mathcal Z$ belongs to $\mathcal I$. In addition, as $Z$ and $\lambda$ are predictable, our assumptions imply that 
\begin{align*}
&\E\left[\left(\int_{\real_+^2\times \real} Z_{(t,x)} \mathcal Z_{(t,\theta)} N(dt,d\theta,dx)\right)^2\right]\\
&\leq 2 \left(\E\left[\left(\int_{\real_+^2\times \real} Z_{(t,x)} \mathcal Z_{(t,\theta)} (N(dt,d\theta,dx)-dt d\theta \nu(dx))\right)^2\right]\right. \\
&\quad + \left.\E\left[\left(\int_{(\real_+^2\times \real} Z_{(t,x)} \mathcal Z_{(t,\theta)}  dt d\theta \nu(dx)\right)^2\right] \right) \\
&=  2 \left(\E\left[\int_{\real_+^2\times \real} |Z_{(t,x)} \mathcal Z_{(t,\theta)}|^2 dt d\theta \nu(dx)\right] + \E\left[\left(\int_{\real_+^2\times \real} Z_{(t,x)} \lambda_t dt \nu(dx)\right)^2\right]  \right)\\
&= 2 \left(\E\left[\int_{\real_+\times \real}  |Z_{(t,x)}|^2  \lambda_t dt \nu(dx) \right] + \E\left[\left(\int_{\real_+\times \real} Z_{(t,x)} \lambda_t dt \nu(dx)\right)^2\right]  \right)<+\infty
\end{align*}
and 
$$\E\left[\int_{0}^{+\infty} \int_{\real_+\times \real} \left|Z_{(t,x)} \mathcal Z_{(t,\theta)} \right|^2 d\theta  dt \nu(dx)\right] 
=\E\left[\int_{0}^{+\infty} \int_\real |Z_{(t,x)}|^2 \lambda_t dt  \nu(dx) \right] <+\infty.$$
This proves (i). In particular, $\delta^N(Z \mathcal Z)$ is well-defined and the IBP formula (\ref{eq:IBPPoisson}) is in force. It gives that 
\begin{align*}
\E\left[F \delta^N(Z \mathcal Z)\right] &= \E\left[\int_{\real_+^2\times \real} Z_{(t,x)} \textbf{1}_{\{\theta \leq \lambda_t\}} D_{(t,\theta,x)} F d\theta dt \nu(dx)\right]\\
&= \E\left[\int_0^{+\infty} \int_\real Z_{(t,x)} \int_{\real_+} \textbf{1}_{\{\theta \leq \lambda_t\}} D_{(t,\theta,x)} F d\theta \nu(dx) dt\right] \\
&= \E\left[\int_0^{+\infty} \int_\real Z_{(t,x)} \int_0^{\lambda_t} D_{(t,\lambda_t,x)} F d\theta \nu(dx) dt\right] \\
&= \E\left[\int_0^{+\infty} \lambda_t \int_\real Z_{(t,x)} D_{(t,\lambda_t,x)} F \nu(dx) dt \right].
\end{align*}
\end{proof} 

\subsection{Elements on Stein's method}
\label{section:Stein}

Whereas Stein's method has been introduced by C. M. Stein in \cite{Stein_method}, the combination of the Malliavin calculus with Stein's method (and known as the Nourdin-Peccati's approach) has been initiated in \cite{Nourdin_Peccati} for the approximation of Gaussian functionals, extended in \cite{Peccati_2010} for Poisson functionals (which is closer to our paper). We introduce in this section the original Stein's approach which allows one to derive Inequality (\ref{eq:Stein}) below. Then in the proof of Theorem \ref{th:main} we will adapt \cite{Peccati_2010} to transform the right-hand side of (\ref{eq:Stein}) into the right-hand side of (\ref{eq:mainestimate}) using the Malliavin calculus. We refer to \cite{NourdinPeccati_Book} for a complete exposition of the original Stein method and of the Nourdin-Peccati approach.

\begin{definition}
\label{def:W}
Let $F$ and $G$ two random variables defined on some $(\Omega, \mathcal F_\infty^N,\P)$. We recall that the Wasserstein distance between $\mathcal{L}_F$ and $\mathcal{L}_G$ (or simply between $F$ and $G$) as : 
$$ d_W(F,G) :=\sup_{h \in \textrm{Lip}} \left| \E[h(F)]-\E[h(G)] \right|,$$
with $\textrm{Lip}:=\left\{h:\real \to \real \textrm{ differentiable a.e. with }\; \|h'\|_\infty \leq 1\right\}$. 
\end{definition}
\noindent
We assume that $F$ is centered. Let $G \sim \mathcal{N}(0,\sigma^2)$. 
We set 
$$\mathcal F_W^0:=\left\{f:\real \to \real,\; \textrm{ twice differentiable with } \|f'\|_\infty \leq 1, \; \|f''\|_\infty \leq 2,\; f(0)=0 \right\}.$$
\noindent
Consider $h$ in $\textrm{Lip}$. C. M. Stein proved in \cite{Stein_method}, that there exists a function $f_h$ in $\mathcal{F}_W^0$ solution to the functional equation (named Stein's equation) :
$$ h(x)-\E[h(G)] = \sigma^2 f_h'(x) - x f_h(x), \quad \forall x \in \real. $$
Plugging $F$ in this equation and taking the expectation, we get that : 
$$ \left|\E[h(F)]-\E[h(G)]\right| = \left|\E[\sigma^2 f_h'(F) - F f_h(F)]\right|.$$
Hence, 
\begin{equation}
\label{eq:Stein}
d_W(F,G) \leq \sup_{f \in \mathcal{F}_W^0} \left|\E[\sigma^2 f'(F) - F f(F)]\right|.
\end{equation}
In addition, the right hand side is equal to $0$ if and only if $F\sim \mathcal{N}(0,\sigma^2)$.
\begin{remark}
	The original result proven by C. M. Stein in \cite{Stein_method} was the existence of a function $f_h$ in 
	$$\mathcal F_W:=\left\{f:\real \to \real,\; \textrm{ twice differentiable with } \|f'\|_\infty \leq 1, \; \|f''\|_\infty \leq 2 \right\}.$$
	The sup on $\mathcal F_W^0$ in \ref{eq:Stein} coincides with a sup on $\mathcal F_W$ if $F$ is centered. To see that, it is enough to replace $f \in \mathcal F_W$ by $f-f(0)$ and use the fact that $\E[f(0)F]=0$. 
\end{remark}

\section{Main results}

We present in Section \ref{section:main} a general bound on the Wasserstein distance between a given Hawkes functional and a centered Gaussian distribution. This bound is then applied in Section \ref{section:BE} to provide Berry-Ess\'een bounds for CLTs for compound Hawkes processes. We refer to Section \ref{section:review} for a review of our results on quantitative counterparts of limit theorems (\ref{eq:introconv2})-(\ref{eq:introconv3}) regarding the compound Hawkes process.

\subsection{The general result}
\label{section:main}

Recall that we consider the compound Hawkes process given as the unique solution to SDE (\ref{eq:H}) 
\begin{equation*}
\left\lbrace
\begin{array}{l}
X_t = \int_{(0,t]\times \real_+\times \real} x \textbf{1}_{\{\theta \leq \lambda_s\}} N(ds,d\theta,dx),\quad t \geq 0 \\\\
H_t = \int_{(0,t]\times \real_+\times \real} \textbf{1}_{\{\theta \leq \lambda_s\}} N(ds,d\theta,dx),\quad t \geq 0 \\\\
\lambda_t = \mu + \int_{(0,t)} \Phi(t-u) d H_u, \quad t \geq 0.\\
\end{array}
\right.
\end{equation*}

\hspace{1em}

\noindent
\begin{assumption}
\label{assumptionY1}
Throughout this paper we recall that $\nu$ is a probability measure on $(\real,\mathcal B(\real))$ with $\nu(\{0\})=0$. In addition, we assume that : 
\begin{equation}
\label{eq:nu}
m:=\displaystyle{\int_\real x \nu(dx)}< +\infty, \quad \displaystyle{\vartheta^2 :=\int_\real x^2 \nu(dx)}< +\infty.
\end{equation}
\end{assumption}

\begin{assumption}
\label{assumptionPhi1}
The mapping $\Phi$ is such that 
$$ \|\Phi\|_1=\int_0^{+\infty} \Phi(u) du < 1.$$
This assumption is fundamental for our analysis as it allows us to set 
\begin{equation}
\label{eq:Psi}
\psi:=\sum_{n\geq 1} \phi ^{(*n)},
\end{equation} 
with $\phi^{(*n)}$ the $n$-th convolution of $\phi$ with itself.
We have that 
$$  \int_0^{+\infty} \psi (t) dt =\int_0^{+\infty}\sum_{n\geq 1} \phi ^{(*n)}(t)dt
    =\sum_{n\geq 1}\int_0^{+\infty}\phi ^{(*n)}(t)dt
    =\sum_{n\geq 1} \|\phi\|_1^n
    =\frac{\|\phi\|_1}{1-\|\phi\|_1}.$$

\end{assumption}

\begin{assumption}
\label{assumptionPhi2}
The mapping $\Phi$ is such that 
$$ \int_0^{+\infty} u \Phi(u) du < +\infty.$$
\end{assumption}

\begin{theorem}
\label{th:main}
Consider 
\begin{itemize}
\item[(i)] $Z:=(Z_{(t,x)})_{(t,x) \in \real_+\times \real}$ a $\mathbb F^X$-predictable stochastic process such that\\
\hspace*{1.3cm}$\quad \quad \quad \E\left[\int_{\real_+\times \real} |Z_{(t,x)}|^2 \lambda_t dt \nu(dx)+ \left(\int_{\real_+\times \real} Z_{(t,x)} \lambda_t dt \nu(dx)\right)^2\right]<\infty.$
\item[(ii)] $\mathcal Z:=(\mathcal Z_{(t,\theta)})_{(t,\theta) \in \real_+^2}$ the stochastic process defined as  
$$ \mathcal Z_{(t,\theta)}= \textbf{1}_{\{\theta \leq \lambda_t\}}, \quad (t,\theta) \in \real_+^2.$$
\item[(iii)] $F:=\delta^N(Z \mathcal Z) = \int_{\real_+^2\times \real} Z_{(t,x)} \mathcal Z_{(t,\theta)} N(dt,d\theta,dx) - \int_{\real_+\times \real} Z_{(t,x)} \lambda_t dt \nu(dx).$
\end{itemize}
Let $\gamma>0$.
Then, letting $G \sim \mathcal N(0,\gamma^2)$, 
\begin{equation}
\label{eq:mainestimate}
d_W(F,G) \leq \E\left[\left|\gamma^2 - \int_{\real_+\times \real} Z_{(t,x)} \lambda_t D_{(t,\lambda_t,x)} F dt \nu(dx)\right| \right] + \E\left[\int_{\real_+\times \real} |Z_{(t,x)}| \lambda_t \left| D_{(t,\lambda_t,x)} F \right|^2 dt \nu(dx)\right].
\end{equation}
\end{theorem}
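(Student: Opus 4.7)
The plan is to combine Stein's bound (\ref{eq:Stein}) with the Hawkes integration by parts formula (\ref{eq:IBP}) and a second-order Taylor expansion of the (difference-type) Malliavin derivative, in the spirit of \cite{Peccati_2010}. First note that by (\ref{eq:delta}), $F=\delta^N(Z\mathcal Z)$ is centered, so Stein's inequality (\ref{eq:Stein}) applies and gives
\[
d_W(F,G) \le \sup_{f\in\mathcal F_W^0}\bigl|\E[\gamma^2 f'(F) - F f(F)]\bigr|.
\]
Fix such an $f$. Since $f(0)=0$ and $f$ is Lipschitz, $f(F)\in L^2(\Omega,\mathcal F_\infty^X,\P)$, and Theorem \ref{th:IPPH} applied to $f(F)$ in place of $F$ yields
\[
\E[F f(F)] = \E\bigl[f(F)\,\delta^N(Z\mathcal Z)\bigr] = \E\!\left[\int_{\real_+\times\real} \lambda_t\, Z_{(t,x)}\, D_{(t,\lambda_t,x)} f(F)\, dt\, \nu(dx)\right].
\]

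The key step is to linearise $D_{(t,\lambda_t,x)} f(F)$. Because the Malliavin derivative is a difference operator, $D_{(t,\lambda_t,x)} f(F) = f(F + D_{(t,\lambda_t,x)} F) - f(F)$, so a Taylor expansion with integral remainder gives
\[
D_{(t,\lambda_t,x)} f(F) = f'(F)\, D_{(t,\lambda_t,x)} F + R_{(t,\lambda_t,x)},\qquad |R_{(t,\lambda_t,x)}| \le \tfrac{1}{2}\|f''\|_\infty \bigl|D_{(t,\lambda_t,x)} F\bigr|^2 \le \bigl|D_{(t,\lambda_t,x)} F\bigr|^2,
\]
where we used $\|f''\|_\infty\le 2$. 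Plugging this back in, we obtain
\[
\E[\gamma^2 f'(F) - F f(F)] = \E\!\left[f'(F)\Bigl(\gamma^2 - \int_{\real_+\times\real}\! Z_{(t,x)}\lambda_t D_{(t,\lambda_t,x)} F\, dt\,\nu(dx)\Bigr)\right] - \E\!\left[\int_{\real_+\times\real}\! \lambda_t Z_{(t,x)} R_{(t,\lambda_t,x)}\, dt\, \nu(dx)\right].
\]
Using $\|f'\|_\infty\le 1$ on the first term and the pointwise bound on $R$ in the second term, taking absolute values and then the supremum over $f\in\mathcal F_W^0$ gives (\ref{eq:mainestimate}).

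The main technical point to watch is justifying the application of Theorem \ref{th:IPPH} to $f(F)$: one must verify that the integrability hypothesis $\E[|f(F)|^2]<\infty$ holds, which follows from the linear growth of $f\in\mathcal F_W^0$ (since $\|f'\|_\infty\le 1$ and $f(0)=0$) together with the $L^2$-integrability of $F=\delta^N(Z\mathcal Z)$ guaranteed by assumption (i). A second, more conceptual, obstacle is that the Poisson Malliavin calculus is discrete rather than differential, so the remainder is genuinely of order $|DF|^2$ and does not vanish; this is precisely what produces the second term in the right-hand side of (\ref{eq:mainestimate}) and distinguishes the Poisson-type Malliavin--Stein bound from its Gaussian counterpart. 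The remaining manipulations (Fubini between $\E$ and the $dt\,\nu(dx)$ integral, and predictability issues) are routine given assumption (i) and the structure of the divergence in Theorem \ref{th:IPPH}.
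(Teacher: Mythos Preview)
Your proposal is correct and follows essentially the same approach as the paper's proof: apply Stein's bound (\ref{eq:Stein}), use the integration by parts formula (\ref{eq:IBP}) to rewrite $\E[Ff(F)]$, expand $D_{(t,\lambda_t,x)}f(F)=f(F+D_{(t,\lambda_t,x)}F)-f(F)$ by a second-order Taylor formula, and conclude with $\|f'\|_\infty\le 1$ and $\|f''\|_\infty\le 2$. Your additional care in checking that $f(F)\in L^2$ (via $f(0)=0$ and the Lipschitz bound) is a detail the paper leaves implicit.
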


\begin{proof}
Note first of all that by Theorem \ref{th:IPPH}, $F$ is well-defined.
Then, let $f$ in $\mathcal{F}_W^0$ (see Definition \ref{def:W}). As $F = \delta^N(Z \mathcal Z)$, the integration by parts formula (\ref{eq:IBP}) leads to  
\begin{align*}
\E\left[f(F) F\right] & = \E\left[f(F) \delta^N(Z \mathcal Z)\right]\\
&= \E\left[\int_{\real_+\times \real} Z_{(t,x)} \lambda_t D_{(t,\lambda_t,x)} f(F) dt \nu(dx)\right] \\
&= \E\left[\int_{\real_+\times \real} Z_{(t,x)} \lambda_t \left(f(F \circ \eps_{(t,\lambda_t,x)}^+)-f(F)\right) dt \nu(dx)\right].
\end{align*}
Using a Taylor expansion, (with $\bar{F}$ be a random element between $F_T \circ \eps_{(t,\lambda_t,x)}^+$ and $F$) we have 
$$ f(F \circ \eps_{(t,\lambda_t,x)}^+)-f(F) = f'(F) D_{(t,\lambda_t,x)} F + \frac12 f''(\bar{F}) \left| D_{(t,\lambda_t,x)} F \right|^2.$$
Hence, 
\begin{align*}
&\left|\E\left[\gamma^2 f'(F) - f(F) F\right]\right|\\
&= \left|\E\left[f'(F) \left(\gamma^2 - \int_{\real_+\times \real} Z_{(t,x)} \lambda_t D_{(t,\lambda_t,x)} F dt \nu(dx)\right) \right] - \frac{1}{2} \E\left[\int_{\real_+\times \real} Z_{(t,x)} \lambda_t f''(\bar{F}) \left| D_{(t,\lambda_t,x)} F \right|^2 dt \nu(dx)\right]\right|\\
&\leq \|f'\|_\infty \E\left[\left|\gamma^2 - \int_{\real_+\times \real} Z_{(t,x)} \lambda_t D_{(t,\lambda_t,x)} F dt \nu(dx)\right| \right] + \frac{\|f''\|_\infty}{2} \E\left[\int_{\real_+\times \real} |Z_{(t,x)}| \lambda_t \left| D_{(t,\lambda_t,x)} F \right|^2 dt \nu(dx)\right]\\ 
&\leq \E\left[\left|\gamma^2 - \int_{\real_+\times \real} Z_{(t,x)} \lambda_t D_{(t,\lambda_t,x)} F dt\right| \right] + \E\left[\int_{\real_+\times \real} |Z_{(t,x)}| \lambda_t \left| D_{(t,\lambda_t,x)} F \right|^2 dt \nu(dx)\right],
\end{align*}
as $f$ belongs to $\mathcal{F}_W^0$. 
\end{proof}
\noindent In particular, we obtain the following corollary.
\begin{corollary}
\label{co:main}
For each $T>0$, consider $Z^T:=(Z_{(t,x)}^T)_{(t,x) \in \real_+\times \real}$ defined as : 
$$ Z_{(t,x)}^T:=x \alpha_t \textbf{1}_{t \in [0,T]}, \quad t \geq 0, \; x \in \real,$$
where $(\alpha_t)_{t\in [0,T]}$ is a $(\mathcal F_t^X)_{t\in [0,T]}$-predictable  process. Let also : 
\begin{align*}
F_T:=\delta^N(Z^T \mathcal Z) &= \int_{(0,T]\times \real_+\times \real} \alpha_s x \textbf{1}_{\{\theta \leq \lambda_s\}} (N(ds,d\theta,dx)-ds d \theta \nu(dx)) \\
&= \int_{(0,T]} \alpha_t dX_t - m \int_0^T \alpha_t \lambda_t dt, \quad m:=\int_\real x \nu(dx);
\end{align*}
and $G \sim \mathcal N(0,\gamma^2)$ for  any $\gamma$. Then 
\begin{equation}
\label{eq:mainestimatebis}
d_W(F_T,G) \leq \E\left[\left|\gamma^2 - \int_0^T \alpha_t \lambda_t \int_\real x D_{(t,\lambda_t,x)} F_T \nu(dx) dt\right| \right] + \E\left[\int_0^T |\alpha_t| \lambda_t \int_\real |x| \left| D_{(t,\lambda_t,x)} F_T \right|^2 \nu(dx) dt\right].
\end{equation}
\end{corollary}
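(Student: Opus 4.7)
The plan is to recognize that Corollary \ref{co:main} is a direct specialization of Theorem \ref{th:main} obtained by choosing the predictable integrand $Z_{(t,x)} = x\alpha_t \mathbf{1}_{[0,T]}(t)$. So the proof reduces to two verifications: (a) checking that this $Z^T$ fulfills the hypotheses of Theorem \ref{th:main}; (b) rewriting both $F_T = \delta^N(Z^T\mathcal Z)$ and the two terms in (\ref{eq:mainestimate}) in the form announced in (\ref{eq:mainestimatebis}).

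For (a), since $\alpha$ is assumed to be $\mathbb F^X$-predictable, the process $(t,\omega,x) \mapsto x\alpha_t(\omega)\mathbf{1}_{[0,T]}(t)$ is $\mathbb F^X$-predictable as well, because $x$ is just the jump-size coordinate. The integrability hypothesis
$$ \E\left[\int_{\real_+\times \real} |Z^T_{(t,x)}|^2\lambda_t\, dt\,\nu(dx) + \left(\int_{\real_+\times \real} Z^T_{(t,x)}\lambda_t\, dt\,\nu(dx)\right)^2\right]<+\infty $$
is implicit in the statement (otherwise the right-hand side of (\ref{eq:mainestimatebis}) is infinite and the claim is trivial); concretely it amounts to requiring enough moments of $\int_0^T |\alpha_t|^p\lambda_t\,dt$ for $p\in\{1,2\}$ combined with $\vartheta^2 = \int_\real x^2\nu(dx)<+\infty$ from Assumption \ref{assumptionY1}.

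For (b), by the definition (\ref{eq:delta}) of $\delta^N$ and the SDE (\ref{eq:H}) for $X$,
\begin{align*}
F_T &= \int_{(0,T]\times\real_+\times\real} x\alpha_t\mathbf{1}_{\{\theta\leq\lambda_t\}}\,N(dt,d\theta,dx) - \int_{(0,T]\times\real_+\times\real} x\alpha_t\mathbf{1}_{\{\theta\leq\lambda_t\}}\,dt\,d\theta\,\nu(dx)\\
&= \int_{(0,T]} \alpha_t\, dX_t - m\int_0^T \alpha_t\lambda_t\, dt,
\end{align*}
where in the first integral we used the jump representation $dX_t = \int_{\real_+\times\real} x\mathbf{1}_{\{\theta\leq\lambda_t\}}N(dt,d\theta,dx)$ and in the second one performed the $\theta$-integral giving $\lambda_t$ and set $m=\int_\real x\,\nu(dx)$ (Assumption \ref{assumptionY1}).

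Finally, inserting $Z^T_{(t,x)} = x\alpha_t\mathbf{1}_{[0,T]}(t)$ into the general bound (\ref{eq:mainestimate}) directly yields
$$ \int_{\real_+\times\real} Z^T_{(t,x)}\lambda_t D_{(t,\lambda_t,x)}F_T\, dt\,\nu(dx) = \int_0^T\alpha_t\lambda_t\int_\real x\,D_{(t,\lambda_t,x)}F_T\,\nu(dx)\,dt $$
and the analogous identity with $|Z^T_{(t,x)}| = |x||\alpha_t|\mathbf{1}_{[0,T]}(t)$ for the quadratic term, which is precisely (\ref{eq:mainestimatebis}). No step poses a real obstacle: the only care required is the measurability/integrability check in (a) and the rigorous identification of $\int \alpha\,dX$ as a Stieltjes integral against the jump measure of $X$ in (b).
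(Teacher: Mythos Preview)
Your proposal is correct and matches the paper's approach: the paper presents Corollary \ref{co:main} as an immediate consequence of Theorem \ref{th:main} without an explicit proof, and your argument is precisely the specialization $Z_{(t,x)} = x\alpha_t\mathbf{1}_{[0,T]}(t)$ followed by a direct rewriting of (\ref{eq:mainestimate}) via Fubini, which is exactly what is implicitly intended.
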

\noindent Before going further we make the following remark showing that somehow the decomposition above is sharp. 

\begin{remark}
\label{rk:Poisson1}
Assume $\Phi \equiv 0$ so that $H$ is an homogeneous Poisson process with intensity $\mu$. Let $m=\int_\real y \nu(dy)$ and $X$ is a compound Poisson process which can thus be represented as : 
$$ X_t = \sum_{i=1}^{H_T} Y_i,$$
with $(Y_i)_{i\geq 1}$ iid random variables independent of $H$ with distribution $\nu$. Assume in addition that  $\displaystyle{\int_{\real} |x|^3 \nu(dx) <+\infty}$. We have that 
$$ F_T = \frac{X_T - \mu m T}{\sqrt{T}},$$
$\lambda_t = \mu$ and 
$$ D_{(t,\lambda_t,x)} F_T = \frac{X_T + x - X_T}{\sqrt{T}} =  \frac{x}{\sqrt{T}}.$$
In addition, 
$${\gamma^2 = \mu \vartheta^2= \mu \int_\real y^2 \nu(dy).} $$
As a consequence,  {for $Z_{(t,x)}^T= \frac{x}{\sqrt{T}}$ for all $t \in [0,T]$ in the previous corollary, we get}
$$ \E\left[\left|\gamma^2 - \frac{1}{\sqrt{T}} \int_0^T \int_\real \lambda_t x D_{(t,\lambda_t,x)} F_T dt \nu(dx)\right| \right] = \left|\gamma^2 - \mu  \vartheta^2 \right| =0.$$
Hence, the speed of convergence is completely contained in that case by the term  
$$\frac{1}{\sqrt{T}} \E\left[\int_0^T \int_\real \lambda_t  |x| \left| D_{(t,\lambda_t,x)} F_T \right|^2 dt \nu(dx)\right]\\
=\frac{\mu \int_\real |x|^3 \nu(dx)}{\sqrt{T}}$$  
and thus we recover a Berry-Ess\'een bound with central speed $T^{-1/2}$.
\end{remark}

\begin{remark}
\label{rem:NPA}
Before concluding this section we would like to make a comment regarding the Nourdin-Peccati methodology we applied with a slight modification. Indeed, one realises that the key ingredient is to consider a random variable $F$ of the form $F=\delta(u)$ where $u$ is a given process (belonging to an appropriate class). In the Nourdin-Peccati's approach (in both Gaussian and Poisson frameworks), one consider a centered random variable $F$ that then naturally belongs to the domain of the Ornstein-Uhlenbeck's operator $L$, which can be defined as $L F = -\delta(D F)$, where once again in both Gaussian and Poisson frameworks $D$ is the Malliavin derivative and $\delta$ the divergence operator. Hence, writing $F = L L^{-1} F = - \delta(D L^{-1} F)$ one gets back to the previous divergence form.\\\\
\noindent
Coming back to the notations of Theorem \ref{th:main} (and choosing $\nu(dx)=\delta_{1}(dx)$ for simplicity of notations), for a Hawkes functional $F$ of the form $F=\delta^N(Z \mathcal Z)$, by adapting the Nourdin-Peccati methodology to our framework (so by adapting \cite{Peccati_2010}), one would obtain the inequality (once again using the notations of Theorem \ref{th:main}): 
\begin{equation}
\label{eq:mainestimateNP}
d_W(F,G) \leq \E\left[\left|\gamma^2 - \int_0^{+\infty} \lambda_t \left(-D_{(t,\lambda_t)} L^{-1} F\right) D_{(t,\lambda_t)} F dt\right| \right] + \E\left[\int_0^{+\infty} |D_{(t,\lambda_t)} L^{-1} F| \lambda_t \left| D_{(t,\lambda_t)} F \right|^2 dt\right].
\end{equation}
However, as $F$ is of the form $F=\delta^N(Z \mathcal Z)$, bounds (\ref{eq:mainestimate}) and (\ref{eq:mainestimateNP}) coincide as : 
$$ -D_{(t,\lambda_t)} L^{-1} F = Z_t, \quad \P\otimes \lambda_t dt-\textrm{a.e.},$$
or equivalently 
$$ -D_{(t,\theta)} L^{-1} F = Z_t \mathcal Z_{(t,\theta)}, \quad \P\otimes dt\otimes d\theta-\textrm{a.e.}.$$
Indeed, letting $\rho_{(t,\theta)} := -D_{(t,\theta)} L^{-1} F$, it holds that
$$\delta^N(\rho) =  -\delta^N(-\rho)=-\delta^N(D L^{-1} F) =L L^{-1} F = F = \delta^N(Z \mathcal Z).$$
Finally note that the fact that the bound rewrites in a simpler form for divergence form functionals has already been observed and exploited (see for instance \cite{Privault:2018aa}, \cite{Besancon} or \cite{torrisi}).
\end{remark}

\begin{remark}
\label{rk:Torrisi}
We would like to discuss here the work \cite{torrisi} which is the closest to ours. In \cite{torrisi}, the author makes use of the Poisson embedding representation to apply the Nourdin-Peccati methodology. So on that regard our work follows the same point of view. However, the remark (initially pointed out in \cite{Hillairet_Reveillac_Rosenbaum} and fully exploited in this paper) that for a Hawkes process the impact of the operator $D_{(t,\theta,x)}$ on a Hawkes functional is the same regarding the value of $\theta$ (provided that $\theta \leq \lambda_t$) was not found. This remark is made clear in Proposition \ref{prop:TempDH} and leads to a simplification of the integration by parts formula (see Relation (\ref{eq:IBP})) for Hawkes functional and thus of the bound in Theorem \ref{th:main2} below. In \cite{torrisi}, the general case of a stochastic intensity point process is considered (including non-linear Hawkes processes), but then calls for some estimates that turn out to fail to be sharp enough in the particular case of a linear Hawkes process. For instance, when dealing with the particular case of a Hawkes process, plugging $u(t):=\frac{1}{\sqrt{T}} \textbf{1}_{[0,T}(t)$ in \cite[Theorem 4.1]{torrisi} leads to an upper bound which does not converges to $0$ as $T$ goes to $+\infty$ (we chose the linear case with $\Phi(x):=\mu+x$ with the notations of \cite[Theorem 4.1]{torrisi}). This might have been a motivation to consider an approximation of the Hawkes process by a counting process ($\delta_a$ in \cite[p.~2120]{torrisi}) which is interesting but which does not cover strictly speaking Convergence (\ref{eq:introconv3}).  
\end{remark}

\subsection{Application to the Hawkes process}
\label{section:BE}

\subsubsection{Overview of our results}
\label{section:review}

We aim to provide the speed of convergence for the convergence of the renormalized Hawkes process when $T$ tends to $+\infty$.\\\\
\noindent
We recall that in the case of a (non-compound) Hawkes process (that is $X_t = H_t$, $\nu(dx)=\delta_{1}(dx)$ or equivalently $Y_i \equiv 1$ in (\ref{eq:compound})), it has been proved as \cite[Lemma 7]{Bacry_et_al_2013} that 
\begin{equation*}
\left(\frac{H_{Tv} - \int_0^{Tv} \lambda_s ds}{\sqrt{T}}\right)_{v \in [0,1]} \underset{T\to+\infty}{\overset{\mathcal{L}-\mathcal{S}}{\longrightarrow}} ({\sigma} W_v)_{v\in [0,1]},
\end{equation*}
(with $W$ a Brownian motion and ${\sigma}^2:=\frac{\mu}{1-\|\Phi\|_1}$)  which in particular implies 
\begin{equation}
\label{eq:convBacrietal}
\frac{H_{T} - \int_0^{T} \lambda_s ds}{\sqrt{T}} \underset{T\to+\infty}{\overset{\mathcal{L}}{\longrightarrow}} \mathcal{N}(0,{\sigma}^2).
\end{equation}
In what follows, we provide for a compound Hawkes process $X:=(X_t)_{t\in [0,T]}$, 
$$ X_t = \sum_{i=1}^{H_t} Y_i,$$
($(Y_i)_{i\geq 1}$ iid with common distribution $\nu$ and independent of $H$) the counterpart of the normal convergence (\ref{eq:convBacrietal}). Let 
$$ F_T:= \frac{X_T - m \int_0^T \lambda_t dt}{\sqrt{T}}, \quad \sigma^2:=\frac{\mu}{1-\|\Phi\|_1}; \quad  \vartheta^2=\int_\real y^2 \nu(dy).$$
More precisely 
\begin{enumerate}
\item We give, as Theorem \ref{th:main2}, a general bound  for the speed of convergence (with respect to $T$) of the convergence (\ref{eq:convBacrietal}) in the generalized case of a compound Hawkes process. 
\item We prove, as Theorem \ref{th:main3} (see this result for a precise statement), for  a compound Hawkes process, and in case of an exponential kernel $\Phi(u)=\alpha e^{-\beta u}$, $u\geq 0$ (with $0 < \alpha < \beta$) or an Erlang kernel $\Phi(u)=\alpha u e^{-\beta u}$, $u\geq 0$  (with $0 < \alpha < \beta^2$) the speed of convergence $O\left(\frac{1}{\sqrt{T}}\right)$ for the counterpart of convergence (\ref{eq:convBacrietal}) :
$$ d_W\left(F_T,\mathcal{N}\left(0,\sigma^2 \ \vartheta^2\right)\right) \leq \frac{C_{\alpha,\beta,\nu}}{\sqrt{T}}; $$
\item We provide, as Theorem \ref{th:main4}, for the Hawkes process and in case of an exponential kernel or of an Erlang kernel, a speed of convergence (with respect to the Wasserstein distance) of the modified CLT in the spirit of \cite[Theorem 2]{Bacry_et_al_2013} as follows : 
$$d_W\left(Y_T,\mathcal{N}(0,\tilde{\sigma}^2)\right) \leq \frac{\tilde{C}_{\alpha,\beta}}{\sqrt{T}},$$
where $Y_T:=\frac{H_T-\int_0^T \E[\lambda_t]dt}{\sqrt T}$ and $\tilde {\sigma}^2=\frac{\mu}{(1-\|\Phi\|_1)^3}$.
\end{enumerate}

\subsubsection{Quantitative Limit Theorems for compound Hawkes processes}

In order to make precise some of the statements below we recall that the Malliavin derivative of $X$, $H$ and $\lambda$ involves the following parametrized system (see Proposition \ref{prop:DescDH}). 

\begin{notation}
\label{notation:systemhat}
For fixed $t\geq 0$, we denote by $(\hat X_s^t,\hat H_s^t,\hat \lambda_s^t)_{s\geq t}$ the unique solution to the SDE 
\begin{equation*}
\left\lbrace
\begin{array}{l}
\hat H_s^{t} = \displaystyle{\int_{(t,s]\times \real_+\times \real} \textbf{1}_{\{\lambda_u \leq \theta \leq \lambda_u+ \hat \lambda_u^{t}\}} N(du,d\theta,dy)}, \quad s \geq t, \\\\
\hat X_s^{t} = \displaystyle{\int_{(t,s]\times \real_+\times \real} y \textbf{1}_{\{\lambda_u \leq \theta \leq \lambda_u+ \hat \lambda_u^{t}\}} N(du,d\theta,dy)}, \quad s \geq t,\\\\
\hat \lambda_s^{t} =  \Phi(s-t) + \displaystyle{\int_{(t,s)} \Phi(s-u) d\hat H_u^{t}}, \quad s>t, \; \hat \lambda_t^{t}=0.
\end{array}
\right.
\end{equation*}
In addition we introduce different compensated martingale processes for the (shifted) Hawkes process and the (shifted) compound Hawkes process
$$ M_s:= X_s- m \int_0^s \lambda_u du, \quad s \in [0,T], \quad 
\hat M_s^{t}:= \hat X_s^{t}- m \int_t^s \hat \lambda^t_u du, \quad s \in [t,T].$$
$$ \mathcal M_s:=  H_s-  \int_0^s \lambda_u du, \quad s \in [0,T], \quad 
 \hat { \mathcal M_s^{t}}:= \hat H_s^{t}-  \int_t^s \hat \lambda^t_u du, \quad s \in [t,T].$$
\end{notation}

\begin{theorem}
\label{th:main2}
Assume Assumptions \ref{assumptionY1}, \ref{assumptionPhi1} and \ref{assumptionPhi2} are in force. 
For $T>0$, let $Z^T_{(t,x)}:= \frac{x}{\sqrt{T}}$, $t\in [0,T]$ and recall
$$F_T = \delta^N(Z^T \mathcal Z^T) = \frac{X_T - m \int_0^T \lambda_t dt}{\sqrt{T}}.$$ 
Let also  $G\sim \mathcal N(0,\sigma^2 \ \vartheta^2)$ with $\sigma^2:=\frac{\mu}{1-\|\Phi\|_1}$ (recall $\vartheta^2=\int_\real y^2 \nu(dy)$). There exists a constant $C_{\Phi,\nu} >0$ depending on $\Phi$ and $\nu$ only such that for any $T>0$
$$ d_W(F_T,G) \leq \frac{C_{\Phi,\nu}}{\sqrt{T}} + \frac{|m|}{T} \E\left[\left| \int_0^T \lambda_t \hat M_T^{t} dt \right| \right],$$
where $\hat M^t$ is defined in Notation \ref{notation:systemhat}.
\end{theorem}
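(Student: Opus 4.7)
The proof starts from Corollary \ref{co:main} specialised to $\alpha_t := 1/\sqrt T$ and $\gamma^2 := \sigma^2\vartheta^2$. By Proposition \ref{prop:DescDH} and the definition of $\hat M^t$ in Notation \ref{notation:systemhat}, the Malliavin derivative of $F_T$ reads
\[
D_{(t,\lambda_t,x)} F_T \;=\; \tfrac{1}{\sqrt T}\Bigl(x + \hat X_T^t - m\textstyle\int_t^T \hat\lambda_s^t\,ds\Bigr) \;=\; \tfrac{x+\hat M_T^t}{\sqrt T},\qquad t\in[0,T].
\]
Substituting this into \eqref{eq:mainestimatebis} and integrating against $\nu$ using $\int_\real x\,\nu(dx)=m$ and $\int_\real x^2\,\nu(dx)=\vartheta^2$, the bound splits as $d_W(F_T,G) \leq \mathrm{(I)} + \mathrm{(II)}$, where
\[
\mathrm{(I)} := \E\!\left[\bigl|\sigma^2\vartheta^2 - \tfrac{1}{T}\textstyle\int_0^T \lambda_t(\vartheta^2+m\hat M_T^t)\,dt\bigr|\right],\quad \mathrm{(II)} := \tfrac{1}{T^{3/2}}\E\!\left[\textstyle\int_0^T\lambda_t\int_\real |x|(x+\hat M_T^t)^2\nu(dx)\,dt\right].
\]

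A triangle inequality on $\mathrm{(I)}$ produces exactly the cross-term kept in the statement:
\[
\mathrm{(I)} \;\leq\; \vartheta^2\,\E\!\left[\bigl|\sigma^2 - \tfrac{1}{T}\textstyle\int_0^T\lambda_t\,dt\bigr|\right] + \tfrac{|m|}{T}\,\E\!\left[\bigl|\textstyle\int_0^T\lambda_t \hat M_T^t\,dt\bigr|\right],
\]
so it suffices to prove the ergodic remainder is $O(T^{-1/2})$. I split $\sigma^2 - T^{-1}\int_0^T\lambda_t\,dt$ into a deterministic bias $\sigma^2 - T^{-1}\int_0^T\E[\lambda_t]\,dt$ and a centred fluctuation $T^{-1}\int_0^T(\E[\lambda_t]-\lambda_t)\,dt$. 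A renewal argument from $\E[\lambda_t]=\mu + \int_0^t\Phi(t-s)\E[\lambda_s]\,ds$ shows that $e(t):=\sigma^2 - \E[\lambda_t]$ solves $e = g + \psi * g$ with $g(t):=\sigma^2\int_t^\infty\Phi(u)\,du$ and $\psi$ as in \eqref{eq:Psi}; under Assumption \ref{assumptionPhi2}, Fubini gives $\int_0^\infty|g|<\infty$, hence $\int_0^\infty|e|<\infty$, and the bias is of order $T^{-1}$. The centred fluctuation is controlled in $L^2$ via the double integral of $\mathrm{Cov}(\lambda_s,\lambda_t)$ on $[0,T]^2$; integrability of this covariance follows from the resolvent representation $\lambda_t - \E[\lambda_t] = \int_0^t \psi(t-u)\,d\mathcal M_u$ combined with It\^o's isometry and the moment bound $\int_0^\infty u\,\psi(u)\,du<\infty$ (which holds under Assumption \ref{assumptionPhi2}), delivering the sought rate $T^{-1/2}$.

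Term $\mathrm{(II)}$ is handled by expanding $(x+\hat M_T^t)^2 = x^2 + 2x\hat M_T^t + (\hat M_T^t)^2$ and invoking the uniform moment bound
\[
\E\bigl[(\hat M_T^t)^2\bigr] \;=\; \vartheta^2\,\E\!\left[\textstyle\int_t^T \hat\lambda_s^t\,ds\right] \;\leq\; \vartheta^2\!\int_0^\infty\!\psi(u)\,du \;=\; \tfrac{\vartheta^2\|\Phi\|_1}{1-\|\Phi\|_1},
\]
obtained from It\^o's isometry on the compensated Poisson integral defining $\hat M^t$ and the renewal identity $\E[\hat\lambda_s^t]=\psi(s-t)$, itself following by taking expectation in the SDE of Proposition \ref{prop:DescDH}. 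Combined with the uniform bound $\sup_{t\geq 0}\E[\lambda_t]<\infty$ (proved analogously from the renewal equation satisfied by $\E[\lambda_t]$) and Cauchy--Schwarz on the mixed summand, the three pieces in $\mathrm{(II)}$ integrate on $[0,T]$ to quantities of order $T$, so $\mathrm{(II)}\leq C_{\Phi,\nu}/\sqrt T$, with $C_{\Phi,\nu}$ absorbing the moments $\int_\real|x|^k\,\nu(dx)$ for $k=1,2,3$. The genuine obstruction — and the reason the cross-term of the statement is left as is — is that the quantity $T^{-1}\E[|\int_0^T\lambda_t\hat M_T^t\,dt|]$ couples the realised intensity with the future-dependent ghost martingale and does not admit a uniform-in-$\Phi$ bound of order $T^{-1/2}$ through these routine tools; this is exactly where the kernel-specific estimates underlying Theorem \ref{th:main3} come into play.
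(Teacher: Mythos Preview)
Your overall architecture matches the paper's exactly: apply Corollary~\ref{co:main} with $\alpha_t=T^{-1/2}$, insert $D_{(t,\lambda_t,x)}F_T=(x+\hat M_T^t)/\sqrt T$, split $\mathrm{(I)}$ into bias, fluctuation and the retained cross-term, and bound $\mathrm{(II)}$ via moments of $\hat M_T^t$. Your alternative renewal arguments for the bias ($e=g+\psi*g$ with $g(t)=\sigma^2\int_t^\infty\Phi$) and for the fluctuation ($L^2$ via the resolvent representation $\lambda_t-\E[\lambda_t]=\int_0^t\psi(t-u)\,d\mathcal M_u$) are correct and perfectly acceptable variants of the paper's Lemma~\ref{lemma:cvgsigma} and its $L^1$ treatment of $A_{1,2}$.

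There is, however, a genuine gap in your handling of $\mathrm{(II)}$. You state the \emph{unconditional} bound $\E[(\hat M_T^t)^2]\le\vartheta^2\|\psi\|_1$ and then combine it with $\sup_t\E[\lambda_t]<\infty$ to conclude that $\int_0^T\E[\lambda_t(\hat M_T^t)^2]\,dt=O(T)$; but $\E[\lambda_t(\hat M_T^t)^2]$ is not bounded by $\E[\lambda_t]\cdot\E[(\hat M_T^t)^2]$ without further input. Likewise, your ``Cauchy--Schwarz on the mixed summand'' would require $\sup_t\E[\lambda_t^2]<\infty$, which is \emph{not} established for general $\Phi$ (the paper only proves it for exponential and Erlang kernels in Lemma~\ref{lemma:secondorder}). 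The fix is immediate and is precisely what the paper does: your own renewal argument actually yields the \emph{conditional} identity $\E_t[\hat\lambda_s^t]=\psi(s-t)$, which is deterministic; hence $\E_t[(\hat M_T^t)^2]=\vartheta^2\int_t^T\psi(s-t)\,ds\le\vartheta^2\|\psi\|_1$ pathwise, and conditioning gives $\E[\lambda_t(\hat M_T^t)^2]=\E[\lambda_t\,\E_t[(\hat M_T^t)^2]]\le\vartheta^2\|\psi\|_1\,\E[\lambda_t]$. For the mixed summand, conditioning shows $\E[\lambda_t\hat M_T^t]=\E[\lambda_t\,\E_t[\hat M_T^t]]=0$ since $\hat M^t$ is a martingale with $\hat M_t^t=0$, so no Cauchy--Schwarz is needed at all. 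With these two one-line corrections your proof is complete and coincides with the paper's.
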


\begin{proof}
To ease the presentation, we chose to provide here the main steps of the proof and to postpone in Section  \ref{section:teclemma} the proof of general results concerning the convergence of moments of order 2.
By Theorem \ref{th:main} we have that 
$$d_W(F_T,G) \leq A_1 +A_2$$
where 
$$ A_1:= \E\left[\left|\sigma^2 \, \vartheta^2- \frac{1}{\sqrt{T}} \int_0^T\int_\real x \lambda_t D_{(t,\lambda_t,x)} F_T dt \nu(dx)\right| \right], \quad A_2:=\frac{1}{\sqrt{T}} \E\left[\int_0^T \int_\real \lambda_t |x| \left| D_{(t,\lambda_t,x)} F_T \right|^2 dt \nu(dx)\right].$$ 
Note that 
$$\sigma^2 \, \vartheta^2= \lim_{T\to +\infty} \E[|F_T|^2] = \lim_{T\to +\infty} \frac{1}{T}\E[|M_T|^2] = d \lim_{T\to +\infty} \frac{1}{T}\E[H_T] = d \lim_{T\to +\infty} \frac{1}{T} \int_0^T \E[\lambda_t] dt,$$
and we will quantify the speed of  convergence. Before going further, recall that according to Proposition \ref{prop:DescDH}, 
\begin{equation}
\label{eq:decompositionD}
D_{(t,\lambda_t,x)} F_T = \frac{1}{\sqrt{T}} D_{(t,\lambda_t,x)} M_T = \frac{1}{\sqrt{T}} \left(x + \hat M_T^{t}\right)
\end{equation}
where $(\hat M_s^{t}:= \hat X_s^{t}- m \int_t^s \hat \lambda^t_u du)_{s \in [t,T]}$, and $(\hat H^{t},\hat \lambda^{t})$ is defined  by (\ref{eq:DH}) and recalled in Notation \ref{notation:systemhat}.
We treat both terms $A_1$ and $A_2$ separately.\\\\
\textbf{Term $A_1$ \\}
We have 
\begin{align*}
A_1 &= \E\left[\left|\sigma^2 \, \vartheta^2 - \frac{1}{\sqrt{T}} \int_0^T \int_\real x \lambda_t D_{(t,\lambda_t,x)} F_T dt \nu(dx)\right| \right]\\
&=\E\left[\left|\sigma^2 \, \vartheta^2 - \frac{\vartheta^2}{T} \int_0^T \lambda_t dt - \frac{1}{T} \int_0^T \int_\real x \lambda_t \hat M_T^{t} dt \nu(dx)\right| \right] \\
&\leq \left|\sigma^2 \, \vartheta^2 - \frac{\vartheta^2}{T} \int_0^T \E[\lambda_t] dt \right| +\frac{\vartheta^2}{T} \E\left[\left| \int_0^T (\lambda_t-\E[\lambda_t]) dt\right|\right]+ \frac{|m|}{T} \E\left[\left| \int_0^T \lambda_t \hat M_T^{t} dt \right| \right]\\
&=: A_{1,1} + \vartheta^2 A_{1,2} + |m| A_{1,3}.
\end{align*}
By Lemma \ref{lemma:cvgsigma} (in Section \ref{section:teclemma}), 
\begin{equation}
\label{eq:A11speed}
A_{1,1} = O\left(\frac{1}{T}\right).
\end{equation}
We turn to term $A_{1,2}$. 
According to the second line of \cite[Lemma 4]{Bacry_et_al_2013} one has
$$H_T-\mathbb E[H_T]=\mathcal M_T+\int_0^T \psi(T-s)\mathcal M_sds,$$
where $\mathcal M_t=H_t-\int_0^t \lambda_s ds.$
By subtracting $H_T$ from the equation one gets
$$-\mathbb E[H_T]=-\int_0^T \lambda_s ds+\int_0^T\psi(T-s)\mathcal M_sds,$$
thus since $\mathbb E[H_T]=\int_0^T\mathbb E [\lambda_s] ds$ 
$$\int_0^T \lambda_s-\mathbb E [\lambda_s] ds=\int_0^T\psi(T-s)\mathcal M_sds.$$
This means that
$$A_{1,2}\leq \frac{1}{T}\int_0^T\psi(T-s)\mathbb E[|\mathcal M_s|]ds
\leq \frac{1}{T}\int_0^T\psi(T-s)\mathbb E[|\mathcal M_s|^2]^{1/2}ds
\leq \frac{1}{T}\int_0^T\psi(T-s)\mathbb E[H_s]^{1/2}ds.$$
According to the proof of \cite[Lemma 5]{Bacry_et_al_2013}, with  Assumption \ref{assumptionPhi2} that $\int_0^{+\infty}s \phi(s)ds<+\infty$,   then\\
 $C:=\int_0^{+\infty}s \psi(s)ds<+\infty$ and a fortiori $\forall p \in [0,1] ,\int_0^{+\infty}s^p \psi(s)ds<+\infty.$ Following once again \cite[Lemma 4]{Bacry_et_al_2013},
$$   \mathbb E[H_t]=\mu t +\int _0^t \psi(t-s) s ds
    =\mu t +\int_0^t \psi(s)(t-s)ds
    \leq (\mu+\|\psi\|_1)t+ C.$$
Hence, using $\sqrt{a+b}\leq \sqrt{a}+\sqrt{b}$ we have the following inequality
$$\mathbb E[H_t]^{1/2}\leq A \sqrt{t}+B.$$ 
Now the term $A_{1,2}$ becomes bounded by
\begin{align*}
A_{1,2} &\leq \frac{A}{T}\int_0^T\psi(T-s)\sqrt{s}ds+\frac{B}{T}\int_0^T\psi(T-s)ds,\\
&\leq \frac{A}{T}\int_0^T\psi(s)\sqrt{T-s}ds+\frac{B}{T}\|\psi\|_1,\\
&\leq \frac{A}{\sqrt T}\int_0^T\psi(s)\sqrt{\frac{T-s}{T}}ds+\frac{B}{T}\|\psi\|_1,\\
&\leq \frac{A}{\sqrt T}\int_0^T\psi(s)ds+\frac{B}{T}\|\psi\|_1,\\
&\leq (\frac{A}{\sqrt T}+\frac{B}{T})\|\psi\|_1,
\end{align*}
leading to
\begin{equation}
\label{eq:A12speed}
A_{1,2} = O\left(\frac{1}{\sqrt{T}}\right).
\end{equation}
Combining (\ref{eq:A11speed}) and (\ref{eq:A12speed}) we get that 
$$ A_1 = O\left(\frac{1}{\sqrt{T}}\right) + {\frac{|m|}{T} \E\left[\left|\int_0^T\lambda_t \hat M_T^t dt\right|\right]}.$$
\textbf{Term $A_2$  \\}
Recall that
\begin{align*}
A_2 &=\frac{1}{\sqrt{T}} \E\left[\int_0^T \int_\real \lambda_t |x| \left| D_{(t,\lambda_t,x)} F_T \right|^2 \nu(dx) dt\right]\\
&\leq \frac{2 \int_\real |x|^3 \nu(dx)}{T^{3/2}} \E\left[\int_0^T \lambda_t dt\right] + \frac{2}{T^{3/2}} \E\left[\int_0^T \int_\real |x| \lambda_t \left|\hat M_T^t\right|^2 \nu(dx) dt\right]\\
&= \frac{2 \int_\real |x|^3 \nu(dx)}{T^{3/2}} \E\left[\int_0^T \lambda_t dt\right] + \frac{2 \int_\real |x| \nu(dx)}{T^{3/2}} \E\left[\int_0^T \lambda_t \left|\hat M_T^t\right|^2 dt\right]\\
&=: 2 \left(\int_\real |x|^3 \nu(dx) A_{2,1} + \int_\real |x| \nu(dx) A_{2,2}\right).
\end{align*}
By Lemma \ref{lemma:cvgsigma}, we immediately get that 
\begin{equation}
\label{eq:A21speed} 
A_{2,1} = O\left(\frac{1}{\sqrt{T}}\right).
\end{equation}
Finally, for Term $A_{2,2}$, it holds that 
\begin{align*}
A_{2,2} &= \frac{1}{T^{3/2}} \E\left[\int_0^T \lambda_t \left|\hat M_T^t\right|^2 dt\right]\\
&= \frac{1}{T^{3/2}} \int_0^T \E\left[ \lambda_t \E\left[\left|\hat M_T^t\right|^2\vert \mathcal F_t\right]\right] dt \\
&= \frac{1}{T^{3/2}} \int_0^T \E\left[ \lambda_t \E\left[\int_t^T \int_{\real} \int_{\real_+} x^2 \textbf{1}_{\{\theta \leq \hat \lambda_u^t\}} N(du,d\theta,dx)\vert \mathcal F_t\right]\right] dt \\
&= \frac{\vartheta^2}{T^{3/2}} \int_0^T \E\left[ \lambda_t \int_t^T \E\left[\hat \lambda_s^t\vert \mathcal F_t\right] ds \right] dt
\end{align*}
{where we used  for the last equality the identity  $\hat H_t^t =0$.} Using Lemma \ref{lemma:momentDH}, 
$$|A_{2,2}|\leq \vartheta^2 \frac{\|\phi\|_1(1+ \|\psi\|_1)}{T^{3/2}}\int_0^T \mathbb E[\lambda_t]dt,$$
and since $\int_0^T \mathbb E[\lambda_t]dt =\mathbb E[H_T] \leq AT+B$ where $A$ and $B$ are positive constants (see computations for Term $A_{1,1}$ above), we have
\begin{equation}
\label{eq:A22speed} 
A_{2,1} = O\left(\frac{1}{\sqrt{T}}\right).
\end{equation}
Combining (\ref{eq:A21speed}) and (\ref{eq:A22speed}), we get that 
$$ A_{2} = O\left(\frac{1}{\sqrt{T}}\right). $$
\end{proof}

\begin{remark}
For simplicity let $R_T:=\frac{1}{T} \E\left[\left|\int_0^T \lambda_t  \hat M^t_T dt\right|\right]$. 
 According to Theorem \ref{th:main2},  determining the speed of convergence for a general Hawkes process requires the speed of convergence for $R_T$. It is somehow embarrassing to admit that we were not able to deal with it for a general kernel $\Phi$ as it calls for a precise statement of the correlation between the original intensity $\lambda$ and somehow the one of the shifted (more precisely of the Malliavin derivative) Hawkes martingale  $\hat M$ (defined in  Notation \ref{notation:systemhat}). It is worth noticing that getting estimates on the correlation of the Hawkes process itself is already quite challenging for a general kernel and constitutes an active research area (see \textit{e.g.} \cite{Horst_Wei_2019}). However, in particular cases we can provide estimates on this quantity $R_T$ as we will see in Theorem \ref{th:main3}. It is also interesting to point out that this term is specific to the self-exciting feature of the intensity as in the Poisson case, that is when $\Phi \equiv 0$, we have that $R_T=0$ for any $T$ (see Remark \ref{rk:Poisson1}).
\end{remark}

\begin{theorem}
\label{th:main3}
Assume an exponential kernel $\Phi(u)=\alpha e^{-\beta u}$, $u\geq 0$ (with $0 < \alpha < \beta$) or an Erlang kernel $\Phi(u)=\alpha u e^{-\beta u}$, $u\geq 0$ (with $0 < \alpha < \beta^2$).\\ Then, using the notations of Theorem \ref{th:main2} ($F_T = \frac{H_T - \int_0^T \lambda_t dt}{\sqrt{T}}$; $G \sim \mathcal N(0,\sigma^2 \, \vartheta^2)$$)$, there exists  in both cases a constant $C_{\alpha,\beta,\nu} >0$ depending only on $\alpha,\beta$ and $\nu$ such that for any $T>0$
$$ d_W(F_T,G) \leq \frac{C_{\alpha,\beta,\nu}}{\sqrt{T}}.$$
\end{theorem}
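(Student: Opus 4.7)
The plan is to apply Theorem~\ref{th:main2} and then show that, for the exponential and Erlang kernels, the leftover term $R_T := \frac{1}{T}\E[\,|\int_0^T \lambda_t \hat M_T^t\, dt|\,]$ is $O(T^{-1/2})$. By Jensen and Cauchy--Schwarz it is enough to prove
\bde V_T := \E\left[\left(\int_0^T \lambda_t \hat M_T^t\, dt\right)^2\right] = O(T). \ede
Expanding the square splits $V_T$ into a diagonal contribution $\int_0^T \E[\lambda_t^2(\hat M_T^t)^2]\, dt$ and a cross contribution $2\int_0^T\!\int_0^t \E[\lambda_s\lambda_t\, \hat M_T^s \hat M_T^t]\, ds\, dt$.

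For $s<t$, I condition on $\F_t^N$ and exploit that $\hat M^s,\hat M^t$ are $(\F_u^N)_{u\geq t}$-martingales with $\hat M_t^t=0$. The predictable covariation, read off their Poisson integral representations (both strips sit above $\lambda_u$, so the overlap of supports is the indicator of $\{\lambda_u\leq\theta\leq \lambda_u+\min(\hat\lambda_u^s,\hat\lambda_u^t)\}$), gives $\E[\hat M_T^s \hat M_T^t\vert \F_t^N] = \vartheta^2\,\E[\int_t^T \min(\hat\lambda_u^s,\hat\lambda_u^t)\, du \vert \F_t^N]$, whence
\bde \E[\lambda_s\lambda_t\, \hat M_T^s \hat M_T^t] = \vartheta^2\, \E\left[\lambda_s\lambda_t \int_t^T \min(\hat\lambda_u^s,\hat\lambda_u^t)\, du\right]. \ede

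The decoupling at the heart of the proof is that $(\hat\lambda^s,\hat\lambda^t)$ is independent of $\sigma(\lambda)$: both shifted processes are driven by $N$ restricted to the strip $\{\theta>\lambda_u\}$, and the translation $\theta\mapsto\theta-\lambda_u$ (deterministic given $\lambda$) turns this restriction into a Poisson measure of intensity $du\,d\tilde\theta\,\nu(dy)$ whose conditional law does not depend on $\lambda$. Hence the expectation above factorises as $\E[\lambda_s\lambda_t]\cdot \E[\int_t^T \min(\hat\lambda_u^s,\hat\lambda_u^t)\, du]$. Using $\min\leq \hat\lambda_u^s$ together with the renewal identity $\E[\hat\lambda_u^s] = \Phi(u-s) + \int_s^u \psi(u-v)\Phi(v-s)\, dv$ and the exponential decay of $\Phi$ and $\psi$ specific to these two kernels (for the exponential case $\psi(u)=\alpha e^{-(\beta-\alpha)u}$; for the Erlang case the characteristic polynomial $(p+\beta)^2-\alpha$ has only negative real roots under $\alpha<\beta^2$, yielding $\psi(u)\lesssim e^{-(\beta-\sqrt\alpha)u}$), one obtains $\int_t^T \E[\min(\hat\lambda_u^s,\hat\lambda_u^t)]\, du \leq C e^{-\gamma(t-s)}$ for some $\gamma>0$.

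Combined with the uniform bound $\sup_{s,t}\E[\lambda_s\lambda_t]<\infty$, which follows from the finite-dimensional Markov (PDMP) representation of $\lambda$ available in both kernel cases and its ergodicity under $\|\Phi\|_1<1$, the cross contribution is $\int_0^T\!\int_0^t \E[\lambda_s\lambda_t]\,e^{-\gamma(t-s)}\, ds\, dt = O(T)$. The diagonal contribution is handled in the same spirit through the It\^o isometry $\E[(\hat M_T^t)^2] = \vartheta^2\,\E[\int_t^T \hat\lambda_u^t\, du] \leq C$ uniformly in $t,T$, giving $\int_0^T \E[\lambda_t^2 (\hat M_T^t)^2]\, dt = O(T)$. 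This proves $V_T = O(T)$ and therefore $R_T = O(T^{-1/2})$. The hard part, as flagged just after Theorem~\ref{th:main2}, is precisely the kernel-specific control of second-order correlations: the exponential and Erlang structures are exactly what permit the explicit PDMP representation of $\lambda$ and the exponential tail of $\psi$, with the corresponding moment identities gathered in the appendix (Section~\ref{section:teclemma}).
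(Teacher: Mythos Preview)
Your approach is essentially that of the paper (carried out in Lemma~\ref{lemma:A13}): Cauchy--Schwarz to reduce to $V_T$, expand the square, identify $\E_t[\hat M_T^s\hat M_T^t]=\vartheta^2\,\E_t[\int_t^T\min(\hat\lambda_u^s,\hat\lambda_u^t)\,du]$, factor out $\lambda_s\lambda_t$ via independence of $\hat\lambda^{\cdot}$ from $\sigma(\lambda)$, and conclude using the exponential decay of $\E[\hat\lambda_u^s]$ together with the uniform bound on $\E[\lambda_t^2]$ (Lemma~\ref{lemma:secondorder}). Your independence statement $(\hat\lambda^s,\hat\lambda^t)\perp\sigma(\lambda)$ is in fact the clean way to say what the paper invokes when it writes $\E_s[\lambda_t\hat\lambda_t^s]=\E_s[\lambda_t]\,\E[\hat\lambda_t^s]$; both rest on the thinning fact that the shifted excess measure above the graph of $\lambda$ is Poisson and independent of $\F_\infty^H$.

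One cosmetic slip: for a Lebesgue integral, $\bigl(\int_0^T f_t\,dt\bigr)^2=\int_0^T\!\int_0^T f_sf_t\,ds\,dt=2\int_0^T\!\int_0^t f_sf_t\,ds\,dt$, so there is no separate ``diagonal contribution'' $\int_0^T \E[\lambda_t^2(\hat M_T^t)^2]\,dt$ --- that term should simply be dropped (compare the paper's expansion at the start of Step~1 in Lemma~\ref{lemma:A13}). This does not affect your conclusion.
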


\begin{proof}
Obviously, Assumptions (\ref{assumptionPhi1})-(\ref{assumptionPhi2}) are in force. By Theorem \ref{th:main2}, we need to estimate the quantity 
$$R_T=\frac{1}{T} \E\left[\left|\int_0^T \lambda_t \hat M_T^t dt\right|\right]=\frac{1}{T} \E\left[\left|\int_0^T \lambda_t \left(D_{(t,\lambda_t)} M_T - 1\right) dt\right|\right],$$
where $\hat M^t$ is defined in Notation \ref{notation:systemhat}.
In Lemma \ref{lemma:A13} we prove that $R_T =O(\frac{1}{\sqrt{T}}).$
\end{proof}

\subsubsection{Alternative quantitative Limit Theorem for the exponential and Erlang Hawkes processes}

In this section we consider the Hawkes process $H$ (alternatively one can set $\nu(dx)=\delta_1(dx)$ or $Y_i \equiv 1$ in Representation (\ref{eq:compound})). 
It has been proven in \cite{Bacry_et_al_2013} that as $T$ goes to infinity
$$Y_T\underset{T\to+\infty}{\overset{\mathcal{L}}{\longrightarrow}} \mathcal{N}(0,\tilde{\sigma}^2),$$
where $Y_T=\frac{H_T-\int_0^T \E[\lambda_t]dt}{\sqrt T}$ is a centered and normalized Hawkes process and $\tilde {\sigma}^2=\frac{\mu}{(1-\|\Phi\|_1)^3}.$\\
The goal of this section is to provide the speed of convergence of $Y_T$ using Wasserstein metric between $F_T$ and its Gaussian limit that we have established in the last paragraph.
\begin{theorem}
\label{th:main4}
	Set $Y_T=\frac{H_T-\int_0^T \E[\lambda_t]dt}{\sqrt T}$ and $\tilde {\sigma}^2=\frac{\mu}{(1-\|\Phi\|_1)^3}$.\\
	Assume { an exponential kernel }$\Phi(u)=\alpha e^{-\beta u}$, $u\geq 0$ (with $0 < \alpha < \beta$) or  { an Erlang kernel } $\Phi(u)=\alpha u e^{-\beta u}$, $u\geq 0$ (with $0 < \alpha < \beta^2$).\\ Then, if  $G \sim \mathcal N(0,\tilde \sigma^2)$ there exists in both cases a constant $C_{\alpha,\beta} >0$ depending only on $\alpha,\beta$ such that for any $T>0$
	$$ d_W(Y_T,G) \leq \frac{C_{\alpha,\beta}}{\sqrt{T}}.$$
\end{theorem}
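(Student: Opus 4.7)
My plan is to reduce to Theorem \ref{th:main3} (applied in the non-compound case $\nu = \delta_1$, where $F_T = \mathcal M_T / \sqrt T$ and the limit is $G_0 \sim \mathcal N(0,\sigma^2)$) by comparing $Y_T$ to a suitable rescaling $cF_T$. I would choose $c := 1/(1-\|\Phi\|_1)$, which satisfies $c^2\sigma^2 = \tilde\sigma^2$ and $c-1 = \|\psi\|_1$. Since the Wasserstein distance is positively homogeneous, $cG_0 \eqlaw G$, and Theorem \ref{th:main3} then yields
$$d_W(cF_T, G) \,=\, c\, d_W(F_T, G_0) \,\leq\, c\, C_{\alpha,\beta}/\sqrt T.$$
By the triangle inequality it then suffices to show $d_W(Y_T, cF_T) = O(1/\sqrt T)$.

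To produce a stochastic-integral representation of $Y_T - cF_T$, I would start from the identity $H_T - \E[H_T] = \mathcal M_T + \int_0^T \psi(T-s)\,\mathcal M_s\, ds$ already established in the proof of Theorem \ref{th:main2} (with $\mathcal M_t := H_t - \int_0^t \lambda_s\,ds$). A stochastic Fubini rewrites this as
$$Y_T \,=\, \frac{1}{\sqrt T} \int_0^T \bigl(1 + \tilde\psi(T-u)\bigr)\, d\mathcal M_u, \qquad \tilde\psi(r) := \int_0^r \psi(v)\,dv.$$
Subtracting $cF_T = (c/\sqrt T)\int_0^T d\mathcal M_u$ and using $c - 1 = \|\psi\|_1$ yields
$$Y_T - cF_T \,=\, -\frac{1}{\sqrt T} \int_0^T \left(\int_{T-u}^{+\infty}\psi(v)\,dv\right) d\mathcal M_u.$$

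Applying the It\^o isometry for the Hawkes martingale $\mathcal M$ (whose predictable quadratic variation is $\int_0^\cdot \lambda_s\,ds$) to this deterministic integrand, I would obtain
$$\E\bigl[(Y_T - cF_T)^2\bigr] \,=\, \frac{1}{T} \int_0^T \left(\int_{T-u}^{+\infty}\psi(v)\,dv\right)^2 \E[\lambda_u]\, du.$$
For the exponential kernel one computes explicitly $\psi(u)=\alpha e^{-(\beta-\alpha)u}$, while for the Erlang kernel $\psi$ is a linear combination of $e^{-(\beta-\sqrt\alpha)u}$ and $e^{-(\beta+\sqrt\alpha)u}$ with both rates positive under $0<\alpha<\beta^2$; in both cases $\int_r^{+\infty}\psi(v)\,dv$ decays exponentially in $r$. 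Combined with the uniform bound $\E[\lambda_u] \leq \mu(1+\|\psi\|_1)<+\infty$ coming from the Volterra identity $\E[\lambda_u] = \mu(1+\tilde\psi(u))$, this gives $\E[(Y_T-cF_T)^2] = O(1/T)$. Cauchy--Schwarz then delivers $d_W(Y_T, cF_T) \leq \E[|Y_T-cF_T|] \leq (\E[(Y_T-cF_T)^2])^{1/2} = O(1/\sqrt T)$, which closes the argument.

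The step I expect to genuinely require the exponential/Erlang assumption is the decay estimate $\int_r^{+\infty}\psi(v)\,dv = O(e^{-\delta r})$: this relies on the explicit Laplace-transform formula for $\psi$ available only in those two cases, and it is precisely why the statement is restricted to these two kernel families rather than to arbitrary kernels satisfying only Assumptions \ref{assumptionPhi1}--\ref{assumptionPhi2}.
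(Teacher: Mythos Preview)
Your argument is correct and reaches the same conclusion, but it follows a genuinely different route from the paper's proof. The paper establishes (Lemma \ref{lemma:alt}) an algebraic identity $Y_T/\gamma = F_T + \mathfrak{R}_T$ by integrating the Markovian SDE satisfied by $\lambda$ (resp.\ $(\lambda,\xi)$), obtaining $\mathfrak{R}_T$ explicitly as $(\E[\lambda_T]-\lambda_T)/(\beta\sqrt{T})$ (plus an analogous $\xi$-term in the Erlang case); it then re-enters Stein's equation, Taylor-expands $f(F_T+\mathfrak{R}_T)$ around $F_T$, and bounds $\E[\mathfrak{R}_T^2]$ via the Dynkin-formula estimate $\mathrm{Var}(\lambda_T)\leq C$ of Lemma \ref{lemma:secondorder}. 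Your approach bypasses the Markov structure entirely: you obtain the same remainder $Y_T - cF_T$ from the renewal identity $H_T-\E[H_T]=\mathcal M_T+\int_0^T\psi(T-s)\mathcal M_s\,ds$ via stochastic Fubini, and bound its $L^2$-norm directly by It\^o isometry and the explicit form of $\psi$. Your use of the triangle inequality $d_W(Y_T,G)\leq d_W(Y_T,cF_T)+d_W(cF_T,G)$ together with $d_W(Y_T,cF_T)\leq\E|Y_T-cF_T|$ is also more elementary than the paper's second pass through Stein's equation. In fact the two remainders coincide: for the exponential kernel, $\lambda_T-\E[\lambda_T]=\alpha\int_0^T e^{-(\beta-\alpha)(T-u)}\,d\mathcal M_u$, which matches your stochastic integral after tracking constants.

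One small correction to your closing remark: the restriction to exponential/Erlang kernels is \emph{not} driven by your tail estimate on $\psi$. Under Assumption \ref{assumptionPhi2} alone one has $\int_0^\infty u\,\psi(u)\,du<\infty$, so $g(r):=\int_r^\infty\psi$ is bounded near $0$ and $O(1/r)$ at infinity, hence $g\in L^2(\mathbb R_+)$; combined with $\sup_u\E[\lambda_u]<\infty$ this already gives $\E[(Y_T-cF_T)^2]=O(1/T)$ for \emph{any} kernel satisfying Assumptions \ref{assumptionPhi1}--\ref{assumptionPhi2}. The genuine bottleneck for general kernels is Theorem \ref{th:main3} itself, specifically the term $A_{1,3}$ handled in Lemma \ref{lemma:A13}, which is where the Markov/SDE structure of the exponential and Erlang cases is actually used.
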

\begin{proof}
	As we have shown in Lemma \ref{lemma:alt}, it is possible to link $Y_T$ to $F_T$ via the relation $$\frac{Y_T}{\gamma}=F_T+\mathfrak R_T$$ where $\gamma=\frac{1}{1-\|\Phi\|_1}$ is a positive constant and  $\mathfrak R_T$ is a "small remainder" whose expressions depend on the kernel. According to $\eqref{eq:Stein}$
	$$d_W\left(\frac {Y_T}{\gamma},\frac{G}{\gamma}\right) \leq \sup _{f\in \mathcal F_W^0} \left|\E\left[\frac{\tilde \sigma ^2}{\gamma^2}f'\left(\frac{Y_T}{\gamma}\right)-\frac{Y_T}{\gamma} f\left(\frac{Y_T}{\gamma}\right)\right]\right|,$$
	where $G$ is a Gaussian of variance $\tilde \sigma ^2 =\frac{\mu}{(1-\|\phi\|_1)^3}.$\\
	By applying a Taylor expansion on $f$ and $f'$:
	$$f\left(\frac{Y_T}{\gamma}\right)=f(F_T+\mathfrak  R_T)=f(F_T)+\mathfrak  R_T f'(X^*),$$
	$$f'\left(\frac{Y_T}{\gamma}\right)= f'(F_T+\mathfrak  R_T)= f'(F_T)+\mathfrak  R_T f''(\bar X),$$
	with $X^*$, $\bar X$ in $[F_T \wedge (F_T+\mathfrak  R_T), F_T \vee (F_T+\mathfrak  R_T)]$ two random variables. Thus:
	\begin{align*}
	&d_W\left(\frac {Y_T}{\gamma},\frac{G}{\gamma}\right) \\
	\leq& \sup _{f\in \mathcal F_W^0} \left|\E\left[\frac{\tilde \sigma^2}{\gamma^2}f'(F_T+\mathfrak  R_T) - (F_T+\mathfrak  R_T) f(F_T+\mathfrak  R_T)\right]\right|,\\
	=& \sup _{f\in \mathcal F_W^0} \left|\E\left[\frac{\tilde \sigma^2}{\gamma^2} (f'(F_T)+\mathfrak  R_T f''(\bar X))-(F_T+\mathfrak  R_T)(f(F_T)+\mathfrak  R_T f'(X^*))\right]\right|,\\
	=& \sup _{f\in \mathcal F_W^0} \left|\E\left[\frac{\tilde \sigma^2}{\gamma^2} f'(F_T) -F_T f(F_T)+\frac{\tilde \sigma^2}{\gamma^2} \mathfrak  R_T f''(\bar X)-\mathfrak  R_T f(F_T) -F_T \mathfrak  R_T f'(X^*) -\mathfrak  R_T^2 f'(X^*))\right]\right|,\\
	\leq& \sup _{f\in \mathcal F_W^0} \left|\E\left[\frac{\tilde \sigma^2}{\gamma^2} f'(F_T)-F_T f(F_T)\right]\right|\\
	+&\sup _{f\in \mathcal F_W^0} \E\left[\left|\frac{\tilde \sigma^2}{\gamma^2} R_T f''(\bar X)\right|+\left|\mathfrak  R_T f(F_T)\right|+\left|F_T  \mathfrak  R_T f'(X^*)\right|+\left|\mathfrak  R_T^2 f'(X^*)\right|\right].
	\end{align*}
	The choice of the variance yields $\frac{\tilde \sigma^2}{\gamma^2} \frac{\mu}{(1-\|\Phi\|_1)^3}\cdot (1-\|\Phi\|_1)^2=\sigma^2$. In Theorem $\ref{th:main3}$ we have proven that $$ \sup _{f\in \mathcal F_W^0} \left|\E\left[\sigma^2 f'(F_T)-F_T f(F_T)\right]\right| \leq O\left(\frac{1}{\sqrt T}\right).$$
	And since $f$ is in $\mathcal F_W^0$, the first and second derivatives are bounded and the Wasserstein distance is thus bounded by
	$$d_W\left(\frac {Y_T}{\gamma},\frac{G}{\gamma}\right) \leq C \left(\frac{1}{\sqrt T}+\E [|\mathfrak  R_T|]+\sup _{f\in \mathcal F_W^0} \E [|\mathfrak  R_T f(F_T)|]+\E [|\mathfrak  R_T F_T|]+\E [|\mathfrak  R_T^2|]\right)$$
	for a positive constant $C$ that does not depend on $T$. In the computations below $C$ will denote a constant independent of $T$ which may change from line to line.\\
	We now try to simplify the upper bound. An application of Cauchy-Schwarz inequality yields
	\begin{align*}
	\mathbb E [|\mathfrak  R_T|] &\leq \sqrt {\mathbb E[\mathfrak  R_T^2]}, \quad \mathbb E [|\mathfrak  R_T F_T|] \leq \sqrt {\mathbb E [\mathfrak  R_T^2]\mathbb E [F_T^2]},
	\end{align*}
    and
    \begin{align*}
    \mathbb E [\mathfrak  R_T f(F_T)] ^2 &\leq \mathbb E[\mathfrak  R_T^2] \mathbb E[|f(F_T)|^2],\\
    &\leq  \mathbb E[\mathfrak  R_T^2] \mathbb E[\|f'\|_{\infty}^2 |F_T|^2], \quad \text{using mean value equality and the fact that $f\in \mathcal F _W^0$}\\
    &\leq \mathbb E[\mathfrak  R_T^2]  \mathbb E [|F_T|^2].
    \end{align*}
    In order to have an upper bound on $\E[|F_T|^2]$, we recall that $F_T=\frac{M_T}{\sqrt T}$ where $M_T=H_T-\int_0^T\lambda_t dt$	is a martingale. This means that $\E[|M_T|^2]=\E[H_T]=O(T)$ (the laste estimate can be found in $\cite{errais2010affine}$), which yields $\E[|F_T|^2] \leq C$ and
    $$\mathbb E[\mathfrak  R_T^2]  \mathbb E [|F_T|^2] \leq C \E[\mathfrak  R_T^2] $$
    and the Wasserstein distance is now bounded by 
    $$d_W\left(\frac {Y_T}{\gamma},\frac{G}{\gamma}\right) \leq C \left(\frac{1}{\sqrt T}+\sqrt{\mathbb E [\mathfrak  R_T^2]}\right).$$
    To perform an estimate on $\mathbb E [\mathfrak R_T^2]$ we need to distinguish the case of Exponential and Erlang kernel.\\\\
    \textbf{Case 1 : the kernel is an exponential function}\\
    In this case the remainder term writes down as $\mathfrak  R_T=\frac{\mathbb E [\lambda_T]-\lambda_T}{\beta \sqrt T}$. Hence
    \begin{align*}
    \mathbb E [\mathfrak  R_T^2] &= \mathbb E \left[\left(\frac{\mathbb E [\lambda_T]-\lambda_T}{\beta \sqrt T}\right)^2\right],\\
    &=\frac{\mathbb E [(\lambda_T -\mathbb E [\lambda_T])^2]}{\beta^2 T},\\
    &=\frac{\textrm{Var}(\lambda_T)}{\beta^2 T},\quad \text{since the second moment of $\lambda$ is bounded (according to Lemma $\ref{lemma:secondorder}$)},\\
    &=O\left(\frac{1}{T}\right).
    \end{align*}
    \textbf{Case 2 : the kernel is an Erlang function}\\
    In this case the squared remainder becomes bounded by:
    \begin{align*}
    \mathbb E [\mathfrak  R_T^2]&=\mathbb E \left[\left(\frac{\mathbb E [\lambda_T]-\lambda_T}{\beta \sqrt T}+\frac{\mathbb E [\xi_T]-\xi_T}{\beta^2 \sqrt T}\right)^2\right],\\
    &\leq 2 \mathbb E \left[\left(\frac{\mathbb E [\lambda_T]-\lambda_T}{\beta \sqrt T}\right)^2 + \left(\frac{\mathbb E [\xi_T]-\xi_T}{\beta^2 \sqrt T}\right)^2\right] ,\quad \text {since $2ab \leq a^2 +b^2$},\\
    &\leq \frac{2 \textrm{Var}(\lambda_T)}{\beta^2 T}+ \frac{2 \textrm{Var}(\xi_T)}{\beta^2 T},\\
    &\leq O\left(\frac{1}{T}\right)\quad \text{since the second moment of $\xi$ is bounded (according to the proof of Lemma $\ref{lemma:secondorder}$)}.
    \end{align*}
    Hence, in both cases one has 
    $$d_W\left(\frac {Y_T}{\gamma},\frac{G}{\gamma}\right) \leq \frac{C}{\sqrt T}.$$
   And since $d_W(\frac {Y_T}{\gamma},\frac{G}{\gamma})=\frac{1}{\gamma}d_W(Y_T,G)$ (replace $f(x)$ with $f_\gamma(x)=\frac{f(\gamma x)}{\gamma}$ in the Wasserstein distance definition) we conclude that there is a positive constant $C_{\alpha,\beta}$ that does not depend on $T$ such that
    $$d_W(Y_T,G)\leq \frac{C_{\alpha,\beta}}{\sqrt T}.$$
\end{proof}

{\bf  Conclusion} : In this paper we have  computed  Berry-Ess\'een bounds associated to Central Limit Theorems for the compound Hawkes process,  using a Mallavin-Stein approach, also known as Nourdin-Peccati's approach. Since a compound Hawkes process  is a natural model for the cumulative loss process of an insurance portfolio exhibiting self-exciting features,  such bounds are of particular interest for the ruin theory. This is a work in progress. 

\section{Appendix}
\label{section:teclemma}

Most of the estimates presented in this section are focused on the Hawkes process and its intensity. As the reader will figure out, the $x$-variable (representing the role of the random variables $(Y_i)_{i\geq 1}$) will be factored out of the computations. Hence, the technology is focused on the Hawkes process together with its intensity as if $\nu(dx)=\delta_1(dx)$.\\
We recall the original system $(X,H,\lambda)$ as (\ref{eq:H}) and $(\hat X^t, \hat H^t,\hat \lambda^t)$ describing the Malliavin derivative recalled in Notation \ref{notation:systemhat}.

\subsection{General estimates}

Throughout this section, we assume that Assumptions \ref{assumptionY1}, \ref{assumptionPhi1} and \ref{assumptionPhi2} are in force. 
\begin{lemma}
\label{lemma:cvgsigma}
For any $T>0$, and recalling that $\sigma^2 = \frac{\mu}{1-\|\phi\|_1}$,
$$\left|\frac{\sigma^2}{\vartheta^2}-\frac{1}{T}\int_0^T\mathbb E [\lambda_t]dt\right| =O\left(\frac{1}{T}\right).$$
\end{lemma}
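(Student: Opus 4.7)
The plan is to derive an explicit expression for $\mathbb{E}[\lambda_t]$ via the renewal kernel $\psi$, then compare its Ces\`aro mean with the claimed limit.

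First, I would start from the definition $\lambda_t = \mu + \int_{(0,t)} \Phi(t-s) dH_s$. Taking expectations and using that $\lambda$ is the $\mathbb{F}^N$-predictable intensity of $H$, I obtain the Volterra equation $m(t) = \mu + \int_0^t \Phi(t-s) m(s) ds$ where $m(t) := \mathbb{E}[\lambda_t]$. Under Assumption~\ref{assumptionPhi1}, this equation is solved via the resolvent kernel $\psi = \sum_{n \geq 1} \Phi^{(*n)}$ introduced in (\ref{eq:Psi}), yielding
\[
m(t) = \mu + \mu \int_0^t \psi(u) du.
\]

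Second, I would integrate this in time and swap integrals using Fubini, giving
\[
\frac{1}{T} \int_0^T m(t) dt = \mu + \frac{\mu}{T} \int_0^T (T-u) \psi(u) du = \mu + \mu \int_0^T \psi(u) du - \frac{\mu}{T} \int_0^T u \psi(u) du.
\]
Recalling that $\|\psi\|_1 = \frac{\|\Phi\|_1}{1-\|\Phi\|_1}$ (see Assumption~\ref{assumptionPhi1}), one checks that $\mu(1 + \|\psi\|_1) = \frac{\mu}{1-\|\Phi\|_1} = \sigma^2$. Consequently,
\[
\sigma^2 - \frac{1}{T} \int_0^T m(t) dt = \mu \int_T^{+\infty} \psi(u) du + \frac{\mu}{T} \int_0^T u \psi(u) du.
\]

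Third, to control both remainder terms by $O(1/T)$, I would invoke Assumption~\ref{assumptionPhi2} together with the fact (already used in the proof of Theorem~\ref{th:main2} via \cite[Lemma~5]{Bacry_et_al_2013}) that $\int_0^{+\infty} u \Phi(u) du < +\infty$ propagates to $\int_0^{+\infty} u \psi(u) du < +\infty$. This bound directly handles the second term. For the first term I would use a Markov-type estimate $\int_T^{+\infty} \psi(u) du \leq \frac{1}{T} \int_T^{+\infty} u \psi(u) du \leq \frac{1}{T} \int_0^{+\infty} u \psi(u) du$. Combining both bounds gives the announced rate $O(1/T)$.

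Since the actual quantity in the stated lemma is $\sigma^2/\vartheta^2$ whereas the argument above produces $\sigma^2$ (which matches the way $A_{1,1}$ appears in the proof of Theorem~\ref{th:main2}, with a prefactor $\vartheta^2$ already factored out), the extra constant $\vartheta^2$ only rescales the bound by a positive finite constant and does not affect the rate. No real obstacle is expected; the only delicate point is ensuring that the finiteness of $\int_0^\infty u \psi(u) du$ is a consequence of $\int_0^\infty u \Phi(u) du < \infty$, which is standard for convolution kernels with $\|\Phi\|_1 < 1$.
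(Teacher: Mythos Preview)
Your proposal is correct and follows essentially the same approach as the paper: both arrive at the expression $\frac{1}{T}\int_0^T \mathbb{E}[\lambda_t]\,dt = \mu + \mu\int_0^T \psi(u)\,du - \frac{\mu}{T}\int_0^T u\psi(u)\,du$, identify $\sigma^2 = \mu(1+\|\psi\|_1)$, and bound the two remainder terms using $\int_0^\infty u\psi(u)\,du < +\infty$ together with the same Markov-type tail estimate $\int_T^\infty \psi(u)\,du \le \frac{1}{T}\int_0^\infty u\psi(u)\,du$. The only cosmetic difference is that the paper quotes the formula for $\mathbb{E}[H_T]$ from \cite{Bacry_et_al_2013} directly, whereas you first derive $\mathbb{E}[\lambda_t]=\mu+\mu\int_0^t\psi(u)\,du$ from the Volterra equation and then integrate; your remark on the harmless $\vartheta^2$ factor is also apt.
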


\begin{proof}
According to \cite[Theorem 2]{Bacry_et_al_2013}, under Assumptions \ref{assumptionPhi1} and \ref{assumptionPhi2}, 
$$\mathbb E [H_t]=\int_0^t \mathbb E[\lambda_s]ds=\mu t +\mu \int_0^t\psi(t-s)sds,$$
where $\psi$ is defined by (\ref{eq:Psi}). Recall that 
$$\int_0^{+\infty} \psi (t) dt =\frac{\|\phi\|_1}{1-\|\phi\|_1}.$$
As $\sigma^2=\lim_{T\to +\infty} \frac{\E[H_T]}{T} = \lim_{T\to +\infty} \frac{\int_0^T \E[\lambda_t] dt}{T}$, the following  computation gives that $\sigma^2 = \frac{\mu}{1-\|\phi\|_1}$. Indeed, 
\begin{align*}
    \frac{1}{T}\int_0^T\mathbb E [\lambda_t]dt -\sigma^2 &=\mu  +\frac{\mu}{T} \int_0^T\psi(T-s)sds -\sigma^2,\\
    &=\mu  +\frac{\mu}{T} \int_0^T\psi(s)(T-s)ds-\sigma^2,\\
    &=\mu +\mu \int_0^T\psi(s)ds -\frac{\mu}{T}\int_0^Ts\psi(s)ds -\sigma^2,\\
    &=\mu +\mu \int_0^{+\infty}\psi(s)ds -\mu\int_T^{+\infty}\psi(s)ds-\frac{\mu}{T}\int_0^Ts\psi(s)ds -\sigma^2,\\
    &= \left(\frac{\mu}{1-\|\phi\|_1} - \sigma^2\right)  -\mu\int_T^{+\infty}\psi(s)ds-\frac{\mu}{T}\int_0^Ts\psi(s)ds.
\end{align*}
Following \cite[Lemma 5]{Bacry_et_al_2013}, $\int_0^{+\infty}s\psi(s)ds < +\infty$ which entails that $\frac{\mu}{T}\int_0^Ts\psi(s)ds=O\left(\frac{1}{T}\right)$. Concerning the other term, one can note that 
\begin{align*}
    \int_T^{+\infty}\psi(s)ds&\leq \int_T^{+\infty}\psi(s)\frac{s}{T}ds
    \leq\frac{1}{T}\int_0^{+\infty}s\psi(s) ds.
\end{align*}
These estimates conclude the proof.
\end{proof}

\begin{lemma}
\label{lemma:momentDH}
We make use of the notation of Proposition \ref{prop:DescDH}. Let $T>0$ and $0 \leq s \leq t \leq T$. The following estimate holds 
$$ \int_t^T \mathbb E_t[\hat \lambda_s^t]ds \leq \|\phi\|_1(1+ \|\psi\|_1).$$
\end{lemma}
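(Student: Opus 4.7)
The strategy is to derive a deterministic renewal equation for $r \mapsto \mathbb{E}_t[\hat\lambda_{t+r}^t]$, identify its solution with the resolvent $\psi$, and integrate.

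First, I would take the conditional expectation $\mathbb{E}_t[\cdot]$ in the third line of the SDE recalled in Notation \ref{notation:systemhat}. By Proposition \ref{prop:DescDH}, the process $\hat H^t$ is a generalized Hawkes process on $[t,+\infty)$ with intensity $\hat\lambda^t$, so that its compensator is $\hat\lambda_u^t\,du$. Combining this with Fubini and the martingale property of the compensated counting process yields, for every $s \geq t$,
\begin{equation*}
\mathbb{E}_t[\hat\lambda_s^t] \;=\; \Phi(s-t) \;+\; \int_t^s \Phi(s-u)\,\mathbb{E}_t[\hat\lambda_u^t]\,du.
\end{equation*}
Setting $g(r) := \mathbb{E}_t[\hat\lambda_{t+r}^t]$ for $r \geq 0$, this reads as the Volterra renewal equation $g = \Phi + \Phi * g$ on $[0,T-t]$.

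Under Assumption \ref{assumptionPhi1}, the series $\psi = \sum_{n\geq 1}\Phi^{(*n)}$ converges in $L^1(\mathbb{R}_+)$, and a direct computation shows $\Phi + \Phi * \psi = \psi$, so $\psi$ solves the renewal equation. Iterating the equation $g = \Phi + \Phi * g$ produces $g = \sum_{k=1}^n \Phi^{(*k)} + \Phi^{(*n)} * g$; since $\|\Phi^{(*n)}\|_1 \leq \|\Phi\|_1^n \to 0$, a Grönwall-type argument on the truncated iteration first yields local boundedness of $g$ on $[0,T-t]$, after which letting $n \to +\infty$ gives the identification $g = \psi$ on this interval.

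The conclusion follows by the change of variable $r = s - t$:
\begin{equation*}
\int_t^T \mathbb{E}_t[\hat\lambda_s^t]\,ds \;=\; \int_0^{T-t} \psi(r)\,dr \;\leq\; \|\psi\|_1 \;=\; \frac{\|\Phi\|_1}{1-\|\Phi\|_1} \;=\; \|\Phi\|_1\,(1+\|\psi\|_1),
\end{equation*}
where the last identity is the elementary rewriting $1 + \|\psi\|_1 = 1/(1-\|\Phi\|_1)$ deduced from the computation that follows \eqref{eq:Psi}. The only delicate step is the identification $g = \psi$; it rests on the $L^1$-summability of $\{\Phi^{(*n)}\}_n$, which is exactly what Assumption \ref{assumptionPhi1} provides, together with enough integrability on $\hat\lambda^t$ to justify the initial application of Fubini and of the martingale property.
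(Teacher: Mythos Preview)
Your proof is correct and follows essentially the same route as the paper's: both derive the renewal equation $\mathbb{E}_t[\hat\lambda_s^t]=\Phi(s-t)+\int_t^s\Phi(s-u)\,\mathbb{E}_t[\hat\lambda_u^t]\,du$ and solve it via the resolvent $\psi$. The only cosmetic difference is that the paper writes the solution in the form $\Phi(\cdot-t)+(\psi*\Phi)(\cdot-t)$ (citing \cite[Lemma~3]{Bacry_et_al_2013}) and integrates the two terms separately to obtain $\|\Phi\|_1+\|\psi\|_1\|\Phi\|_1$, whereas you collapse this to $g=\psi$ directly and recover the stated bound via the algebraic identity $\|\psi\|_1=\|\Phi\|_1(1+\|\psi\|_1)$.
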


\begin{proof}
Recall that for $t\geq 0$, $\E_t[\cdot]$ stands for the conditional expectation $\E[\cdot\vert \mathcal F_t^N]$. 
Taking the conditional expectation in Equation (\ref{eq:DH}) for $0\leq t \leq s$ leads to
$$\mathbb E _t[\hat \lambda_s^t]=\phi(s-t)+\int_{(t,s)}\phi (s-u)\mathbb E_t[\hat \lambda^t_u]du.$$
For $t\geq 0$ set $f_t(s):=\mathds 1_{s\geq t}\mathbb E _t[\hat \lambda_s^t]$ and $\phi_t(s):=\mathds 1_{s\geq t} \phi(s-t)$, the last equation becomes
$$f_t(s)=\phi_t(s)+\int_0^s \phi(s-u)f_t(u)du.$$
A straightforward application of \cite[Lemma 3]{Bacry_et_al_2013} yields
$$f_t(s)=\phi_t(s)+\int_0^s \psi(s-u)\phi_t(u)du,$$
and by integrating between $t$ and $T$ one obtains
\begin{align*}
    \int_t^T \mathbb E_t[\hat \lambda_s^t]ds &= \int _t^T f_t(s)ds,\\
    &=\int _t^T \phi_t(s) ds + \int _t^T \int_0^s\psi(s-u)\phi_t(u)du ds,\\
    &=\int _t^T \phi_t(s) ds + \int _t^T \int_t^s\psi(s-u)\phi_t(u)du ds,\\
    &=\int _t^T \phi_t(s) ds + \int _t^T \int_t^T\mathds 1_{s\geq u}\psi(s-u)\phi_t(u)du ds.
\end{align*}
Since all the involved functions are positive, we use Fubini's theorem and exchange the integrals to get
\begin{align*}
    \int_t^T \mathbb E_t[\hat \lambda_s^t]ds &=\int _t^T \phi_t(s) ds + \int _t^T \int_u^T\psi(s-u)ds\phi_t(u)du\\
    &\leq \|\phi\|_1+\int _t^T \|\psi\|_1 \phi_t(u)du\\
    &\leq \|\phi\|_1(1+ \|\psi\|_1).
\end{align*}
\end{proof}

\subsection{Specific estimates for the exponential and the Erlang's kernels}\label{sec:kernel}

\subsubsection{Definition and some properties of the exponential and the Erlang Hawkes processes}

\begin{definition}[Exponential Hawkes process]
A counting process $H$ as in Definition \ref{def:standardHawkes} is referred to 
\begin{itemize}
\item[(i)] an exponential Hawkes process if there exist $(\alpha,\beta) \in \mathbb{R}_+^2$ such that  
\begin{equation}
\label{eq:expokernel}
\alpha < \beta \quad \textrm{ and } \quad \Phi(u):=\alpha e^{-\beta u}, \quad u \in \mathbb{R}_+. 
\end{equation}
\item[(ii)] an Erlang Hawkes process if there exist $(\alpha,\beta) \in \mathbb{R}_+^2$ such that 
\begin{equation}
\label{eq:Erlangkernel}
\alpha < \beta^2 \quad \textrm{ and } \quad \Phi(u):=\alpha u e^{-\beta u}, \quad u \in \mathbb{R}_+. 
\end{equation}
\end{itemize}
\end{definition}

\begin{remark}
Obviously, the two kernels above ((\ref{eq:expokernel})-(\ref{eq:Erlangkernel})) satisfy Assumptions \ref{assumptionPhi1} and (\ref{assumptionPhi2}).
\end{remark}
\begin{prop}
	\label{prop:Dynkin}
	Let $H$ be a Hawkes process and let $\lambda$ be its intensity. 
	\begin{itemize}
		\item [(i)] If $H$ is an exponential Hawkes process, then $(\lambda_t)_{t\in \mathbb R_+}$ is a Markov process that satisfies the following Dynkin formula for each function $g\in \mathcal C^1$ and for each $t\le T$:
		$$\E[g(\lambda_T)|\mathcal F_t]=g(\lambda_t)+\E \big [ \int_t^T (\mathcal D g)(\lambda_s)ds|\mathcal F_t\big],$$
		where $\mathcal D $ is the infinitesimal generator:
		$$\mathcal D g (\lambda):=\beta (\mu -\lambda)g'(\lambda)+\lambda \big ( g(\lambda+\alpha)-g(\lambda)\big),$$
		whenever these expectations are finite.
		\item [(ii)] If $H$ is an Erlang Hawkes process, then $(\lambda_t,\xi_t)_{t\in \mathbb R_+}$ is a Markov process, where $\xi_t =\int_0^t \alpha e^{-\beta (t-s)}dH_s$ is an auxiliary process. In this case, $(\lambda_t,\xi_t)_{t\in \mathbb R_+}$ satisfies this Dynkin formula for each function $g\in \mathcal C^1$ and for each $t\le T$
		$$\E[g(\lambda_T,\xi_T)|\mathcal F_t]=g(\lambda_t,\xi_t)+\E \big [ \int_t^T (\mathcal D g)(\lambda_s,\xi_s)ds|\mathcal F_t\big],$$
		where $\mathcal D $ is the infinitesimal generator
		$$\mathcal D g (\lambda,\xi):=\big (\xi+\beta (\mu -\lambda)\big)\partial _\lambda g(\lambda,\xi)-\beta \xi \partial_\xi g(\lambda,\xi)+\lambda \big ( g(\lambda,\xi+\alpha)-g(\lambda,\xi)\big),$$
		whenever these expectations are finite.
	\end{itemize}	
\end{prop}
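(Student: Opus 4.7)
The plan is to express $\lambda$ (and, in the Erlang case, the pair $(\lambda,\xi)$) as a càdlàg semimartingale driven by $H$, then apply Itô's change-of-variables formula for pure-jump semimartingales and take conditional expectations, using the compensator $\lambda_s\,ds$ of $H$ to discharge the martingale part.

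For (i), I would first differentiate $\lambda_t-\mu=\alpha\int_{(0,t]}e^{-\beta(t-s)}dH_s$ to obtain the SDE
\[
d\lambda_t=\beta(\mu-\lambda_t)\,dt+\alpha\,dH_t,
\]
which simultaneously yields the Markov property of $(\lambda_t)_{t\ge 0}$ (the driving jump process $H$ has intensity depending only on the current value of $\lambda$) and identifies its continuous part $d\lambda^c_t=\beta(\mu-\lambda_t)\,dt$ and its jumps of constant size $\alpha$ occurring at the jump times of $H$. Applying Itô's formula for càdlàg semimartingales to $g(\lambda_t)$ with $g\in\mathcal C^1$ gives, for $t\le T$,
\[
g(\lambda_T)-g(\lambda_t)=\int_t^T g'(\lambda_s)\beta(\mu-\lambda_s)\,ds+\int_t^T\bigl[g(\lambda_{s-}+\alpha)-g(\lambda_{s-})\bigr]\,dH_s.
\]
Writing $dH_s=\lambda_s\,ds+d\widetilde M_s$ with $\widetilde M$ the compensated Hawkes martingale, rearranging exhibits the integrand of $\mathcal D g$ in the Lebesgue part and a local martingale; conditioning on $\mathcal F_t$ (and assuming enough integrability so the martingale has zero expectation, as is assumed in the statement) yields the announced Dynkin identity.

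For (ii), the key preliminary is to realise that $\lambda$ alone is \emph{not} Markovian for the Erlang kernel, but the enlargement $(\lambda_t,\xi_t)$ is, where $\xi_t:=\int_{(0,t]}\alpha e^{-\beta(t-s)}dH_s$ carries the missing first-order information. Differentiating the two expressions gives
\[
d\xi_t=-\beta\xi_t\,dt+\alpha\,dH_t,\qquad
d\lambda_t=\bigl[\xi_t+\beta(\mu-\lambda_t)\bigr]dt,
\]
the second identity relying on the crucial fact that $\Phi(0)=0$ so that $\lambda$ has no jumps (only $\xi$ does, with jump size $\alpha$). The pair thus solves a closed SDE whose coefficients depend only on $(\lambda_t,\xi_t)$, so it is Markov. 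Applying Itô's formula to $g(\lambda_t,\xi_t)$, using that $\lambda$ is continuous (so $\lambda_{s-}=\lambda_s$), produces
\[
g(\lambda_T,\xi_T)-g(\lambda_t,\xi_t)=\int_t^T\!\!\Bigl\{(\xi_s+\beta(\mu-\lambda_s))\partial_\lambda g-\beta\xi_s\partial_\xi g\Bigr\}(\lambda_s,\xi_s)\,ds+\int_t^T\!\!\bigl[g(\lambda_s,\xi_{s-}+\alpha)-g(\lambda_s,\xi_{s-})\bigr]dH_s,
\]
and the same compensation $dH_s=\lambda_s\,ds+d\widetilde M_s$ followed by conditioning on $\mathcal F_t$ completes the proof.

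The only genuine obstacle is the Markov-state issue in (ii): finding the right augmentation $\xi$ so that the pair is Markov and the generator closes. Once this is done, the rest is a routine application of Itô's formula and the martingale property of the compensated Hawkes process; integrability is assumed in the statement ("whenever these expectations are finite"), so no additional moment estimates are required.
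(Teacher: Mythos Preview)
Your argument is correct: deriving the closed SDE for $\lambda$ (respectively $(\lambda,\xi)$), applying the It\^o change-of-variables formula for finite-variation jump semimartingales, and compensating $dH_s$ by $\lambda_s\,ds$ is exactly the right mechanism, and your identification of the key point in~(ii)---that $\Phi(0)=0$ forces $\lambda$ to be continuous so only $\xi$ carries the jump of size $\alpha$---is spot on.

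The paper, however, does not carry out this computation at all: its proof consists solely of citing Proposition~2.1 of \cite{errais2010affine} for the exponential case and Proposition~1 of \cite{duarte2019stability} for the Erlang case. Your self-contained derivation is essentially what those references do, so nothing is lost; on the contrary, your approach makes the argument transparent and shows explicitly why the augmentation by $\xi$ is needed and sufficient in the Erlang case. One minor point worth tidying is the predictable-versus-c\`adl\`ag convention for $\lambda$ (the paper defines $\lambda_t=\mu+\int_{(0,t)}\Phi(t-s)\,dH_s$, so $\lambda$ is left-continuous and $\lambda_{s-}$ in your It\^o expansion should simply be $\lambda_s$), but this does not affect the generator or the conclusion.
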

\begin{proof}
	The exponential case comes from Proposition 2.1 in \cite{errais2010affine} and the Erlang case can be found in Proposition 1 in \cite{duarte2019stability}. 
\end{proof}

\begin{lemma}
	\label{lemma:secondorder}
	Assume that $H$ is an exponential or an Erlang Hawkes process (that is  Condition (\ref{eq:expokernel}) or (\ref{eq:Erlangkernel}) is in force for some parameters $\alpha, \beta$).
	Then there is a positive constant $C$ (depending only on the parameters $\alpha$ and $\beta$) such that for any $t\geq 0$
	$$\E [\lambda_t^2] \leq C.$$ 
\end{lemma}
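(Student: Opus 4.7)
The strategy is to apply the Dynkin formula of Proposition \ref{prop:Dynkin} to a well-chosen polynomial test function and obtain a closed (scalar or small-dimensional) linear ODE for the second-order moments; the stability conditions $\alpha<\beta$ (exponential case) and $\alpha<\beta^2$ (Erlang case) will be exactly what is needed to ensure uniform boundedness of the solution. Throughout, we use that $\E[\lambda_t]$ (and, in the Erlang case, $\E[\xi_t]$) are uniformly bounded in $t\geq 0$; this is immediate from the representation in the proof of Lemma \ref{lemma:cvgsigma} under Assumption \ref{assumptionPhi1}, and can also be seen directly by solving the scalar ODEs obtained from applying Proposition \ref{prop:Dynkin} to $g(\lambda)=\lambda$ (resp.\ $g(\lambda,\xi)=\lambda,\xi$).

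\emph{Exponential case.} I apply the Dynkin formula to $g(\lambda)=\lambda^2$. A direct computation gives
$$\mathcal D g(\lambda)=2\lambda\bigl(\beta(\mu-\lambda)\bigr)+\lambda\bigl((\lambda+\alpha)^2-\lambda^2\bigr)=-2(\beta-\alpha)\lambda^2+(2\beta\mu+\alpha^2)\lambda.$$
Setting $m(t):=\E[\lambda_t^2]$ and taking expectations in Proposition \ref{prop:Dynkin}(i), one obtains the scalar linear ODE
$$m'(t)=-2(\beta-\alpha)\,m(t)+(2\beta\mu+\alpha^2)\,\E[\lambda_t],\qquad m(0)=\mu^2.$$
Since $\beta>\alpha$ and $\E[\lambda_t]$ is bounded by some constant $M_1$ depending only on $\alpha,\beta,\mu$, a standard comparison argument yields
$$m(t)\leq \max\!\left(\mu^2,\;\frac{(2\beta\mu+\alpha^2)M_1}{2(\beta-\alpha)}\right),$$
which is the desired uniform bound.

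\emph{Erlang case.} Here the intensity alone is not Markovian and one has to track $(\lambda_t,\xi_t)$ jointly. I apply the Dynkin formula to each of the three quadratic monomials $\lambda^2$, $\xi^2$, $\lambda\xi$; straightforward computations give
\begin{align*}
\mathcal D(\lambda^2)&=-2\beta\lambda^2+2\lambda\xi+2\beta\mu\lambda,\\
\mathcal D(\xi^2)&=-2\beta\xi^2+2\alpha\lambda\xi+\alpha^2\lambda,\\
\mathcal D(\lambda\xi)&=-2\beta\lambda\xi+\xi^2+\alpha\lambda^2+\beta\mu\xi.
\end{align*}
Setting $a(t):=\E[\lambda_t^2]$, $b(t):=\E[\xi_t^2]$, $c(t):=\E[\lambda_t\xi_t]$, this yields the coupled linear system
$$\begin{pmatrix}a'\\ b'\\ c'\end{pmatrix}=A\begin{pmatrix}a\\ b\\ c\end{pmatrix}+\begin{pmatrix}2\beta\mu\,\E[\lambda_t]\\ \alpha^2\,\E[\lambda_t]\\ \beta\mu\,\E[\xi_t]\end{pmatrix},\qquad A:=\begin{pmatrix}-2\beta & 0 & 2\\ 0 & -2\beta & 2\alpha\\ \alpha & 1 & -2\beta\end{pmatrix}.$$
The key computation is that, expanding along the first row, the characteristic polynomial of $A$ factors as $(-2\beta-x)\bigl[(-2\beta-x)^2-4\alpha\bigr]$, so the three eigenvalues are $-2\beta$ and $-2\beta\pm 2\sqrt{\alpha}$. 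The assumption $\alpha<\beta^2$ is precisely what ensures the largest eigenvalue $-2\beta+2\sqrt{\alpha}$ is strictly negative. Since $\E[\lambda_t]$ and $\E[\xi_t]$ are uniformly bounded, the forcing term is bounded, and a standard stable-linear-system argument (variation of constants) then gives $a(t),b(t),c(t)\leq C$ for all $t\geq 0$, with $C$ depending only on $\alpha,\beta,\mu$, which concludes the proof.

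The main obstacle is really just the eigenvalue computation for $A$: once one sees the factorisation, the whole argument becomes routine, but without it the threshold $\alpha<\beta^2$ would look ad hoc. A minor technical point, which is handled by a standard localisation argument (applying the Dynkin formula to $g_n(\lambda)=\lambda^2\wedge n$ and passing to the limit), is to justify a priori that the relevant expectations are finite so that Proposition \ref{prop:Dynkin} applies.
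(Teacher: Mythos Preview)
Your proof is correct and follows essentially the same route as the paper: in both cases you apply the Dynkin formula to the same quadratic test functions, derive the same linear ODE (exponential) or the same $3\times 3$ linear system with the same matrix $A$ and eigenvalues $-2\beta,\,-2\beta\pm 2\sqrt{\alpha}$ (Erlang), and conclude by stability plus boundedness of the first moments. If anything, you are more explicit than the paper (which is very terse in the exponential case) and you flag the localisation issue needed to justify finiteness of the moments, which the paper leaves implicit.
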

\begin{proof}
We treat the two cases (\ref{eq:expokernel}) or (\ref{eq:Erlangkernel}) separately. 
	\begin{itemize}
		\item If $\Phi(u)=\alpha e^{-\beta u}$, $\alpha < \beta$\\
		According to \cite {errais2010affine}, the intensity is a Markov process that satisfies the following Dynkin formula:
		$$\frac{d}{dt} \E[g(\lambda_t)]=\E[\beta (\mu-\lambda_t)g'(\lambda_t)+\lambda_t \big(g(\lambda_t+\alpha)-g(\lambda_t) \big)].$$
		
	This formula can be used to have the variance  of the intensity $\lambda_t$ and an upper bound explicitly.
	\item  If $\Phi(u)=\alpha u e^{-\beta u}$\\
	Even though the intensity is no longer Markov it is possible to see it as a part of a 'Markov cascade' (cf \cite{duarte2019stability}) with an auxiliary process $(\xi_t)_{t\geq 0}$. The Dynkin formula for the vector process $(\lambda_t,\xi_t)_{t\geq 0}$ is
	$$\frac{d}{dt} \E[g(\lambda_t,\xi_t)]=\E\big[\big (\xi_t +\beta (\mu-\lambda_t)\big)\partial _{\lambda }g(\lambda_t,\xi_t) -\beta \xi_t \partial _{\xi}g(\lambda_t,\xi_t)+\lambda_t \big( g(\lambda_t,\xi_t+\alpha)-g(\lambda_t,\xi_t)\big)\big].$$
	After taking $g(\lambda,\xi)=\lambda^2$, $g(\lambda,\xi)=\xi^2$ and $g(\lambda,\xi)=\lambda \xi$, we obtain the following system
	\begin{equation}
	\systeme{
		\partial_t \E[\lambda^2_t]=-2\beta \E[\lambda^2_t]+2\E[\lambda_t \xi_t]+2\beta\mu\E[\lambda_t],\\
		\partial_t \E[\xi^2_t]=-2\beta \E[\xi^2_t] +2\alpha \E[\lambda_t \xi_t]+\alpha^2 \E[\lambda_t],\\
		\partial_t \E[\lambda_t \xi_t]=\alpha \E[\lambda^2_t] +\E[\xi^2_t]-2\beta \E[\lambda_t \xi_t]+\beta \mu \E[\xi_t]
	}.
	\end{equation}
	This is a linear system that can be put under the following matrix form:
	\begin{equation*}
	\frac{d}{dt}
	\begin{pmatrix}
	\E [\lambda_t^2]\\
	\E [\xi_t^2]\\
	\E[\lambda_t \xi_t]\\
	\end{pmatrix}
	=
	\begin{pmatrix}
	-2\beta & 0 & 2\\
	0 & -2 \beta & 2\alpha\\
	\alpha & 1 & -2 \beta\\
	\end{pmatrix}
	\begin{pmatrix}
	\E [\lambda_t^2]\\
	\E [\xi_t^2]\\
	\E[\lambda_t \xi_t]\\
	\end{pmatrix}
	+\begin{pmatrix}
	2 \beta \mu \E [\lambda_t]\\
	\alpha^2 \E [\lambda_t]\\
	\beta \mu \E [\xi_t] 
	\end{pmatrix}.
	\end{equation*}
	The matrix has three distinct negative eigenvalues: $v=-2\beta$ and $v_{\pm}=-2\beta \pm 2\sqrt{\alpha}$.\\
	Since $\E[\lambda_t]$ and $\E[\xi_t]$ are both bounded by a constant, we conclude that there is $C>0$ such that
	$$\E[\lambda_t^2]\leq C.$$
	\end{itemize}
\end {proof}

\begin{lemma}
	\label{lemma:A13}
	Assume that $\Phi(u)=\alpha e^{-\beta u}$ (with $\alpha < \beta $) or $\Phi(u)=\alpha u e^{-\beta u}$ (with $\alpha < \beta^2 $).\\
	We remind that $A_{1,3}=\frac{1}{T} \E\left[\left| \int_0^T \lambda_t \hat M_T^t dt \right| \right]$ and that $\hat M_\cdot^t$ is defined in Notation \ref{notation:systemhat}. For any T>0, we have
	$$ A_{1,3}=O\left(\frac{1}{\sqrt T}\right). $$
\end{lemma}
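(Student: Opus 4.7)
The plan is to prove $A_{1,3} = O(T^{-1/2})$ by a Cauchy--Schwarz reduction to controlling the second moment of $I_T := \int_0^T \lambda_t \hat M_T^t \, dt$. Since $A_{1,3} = T^{-1}\E[|I_T|] \leq T^{-1}\sqrt{\E[I_T^2]}$, it suffices to prove $\E[I_T^2] = O(T)$. By Fubini and the symmetry of the integrand,
\begin{equation*}
\E[I_T^2] = 2\int_0^T \int_0^t \E[\lambda_s \lambda_t \hat M_T^s \hat M_T^t]\, ds\, dt,
\end{equation*}
so the task reduces to showing $|\E[\lambda_s \lambda_t \hat M_T^s \hat M_T^t]| \leq C e^{-c(t-s)}$ for $s<t$, with $c,C>0$ depending only on the kernel parameters.

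Fix $s < t$ and split $\hat M_T^s = \hat M_t^s + (\hat M_T^s - \hat M_t^s)$. The quantity $\lambda_s \lambda_t \hat M_t^s$ is $\mathcal F_t^N$-measurable (since $\lambda$ is predictable and $\hat M_t^s$ depends only on $N$ restricted to $(s,t]$), while $\hat M^t$ is an $\mathbb F^N$-martingale with $\hat M_t^t = 0$; conditioning on $\mathcal F_t^N$ thus kills the first contribution, $\E[\lambda_s \lambda_t \hat M_t^s \hat M_T^t] = 0$. For the remaining term, $\hat M^s$ restricted to $[t,T]$ and $\hat M^t$ are compound-Poisson martingale integrals of $y$ against $N$ on overlapping stripes $(\lambda_u, \lambda_u + \hat\lambda_u^s]$ and $(\lambda_u, \lambda_u + \hat\lambda_u^t]$; the overlap has width $\min(\hat\lambda_u^s, \hat\lambda_u^t)$, so the predictable quadratic covariation reads $d\langle \hat M^s, \hat M^t\rangle_u = \vartheta^2 \min(\hat\lambda_u^s, \hat\lambda_u^t)\, du$, whence
\begin{equation*}
\E_t\bigl[(\hat M_T^s - \hat M_t^s)\hat M_T^t\bigr] = \vartheta^2 \E_t\Bigl[\int_t^T \min(\hat\lambda_u^s, \hat\lambda_u^t)\, du\Bigr] \leq \vartheta^2 \E_t\Bigl[\int_t^T \hat\lambda_u^s\, du\Bigr].
\end{equation*}

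The exponential estimate then comes from the Markov structure. In the exponential case, $(\lambda, \hat\lambda^s)$ is jointly Markov on $[s,\infty)$, its generator being the sum of those for $\lambda$ and $\hat\lambda^s$ (Proposition \ref{prop:Dynkin}(i) with baselines $\mu$ and $0$, respectively), because $H$ and $\hat H^s$ are driven by disjoint stripes of $N$ and hence never jump simultaneously. Dynkin applied to $g(\lambda,\hat\lambda) = \hat\lambda$ gives $\partial_u \E_t[\hat\lambda_u^s] = -(\beta-\alpha)\E_t[\hat\lambda_u^s]$, hence $\E_t[\int_t^T \hat\lambda_u^s\, du] \leq \hat\lambda_t^s/(\beta-\alpha)$; applied to $g(\lambda,\hat\lambda) = \lambda \hat\lambda$ starting from $(\lambda_s, \alpha)$ at $s^+$, it yields a linear ODE whose solution is affine in $\lambda_s$ with coefficients decaying as $e^{-(\beta-\alpha)(t-s)}$. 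Combined with the uniform bound $\E[\lambda_s^2] \leq C$ from Lemma \ref{lemma:secondorder}, this gives $\E[\lambda_s \lambda_t \hat\lambda_t^s] \leq C e^{-(\beta-\alpha)(t-s)}$. In the Erlang case the same strategy applies: $(\lambda, \xi, \hat\lambda^s, \hat\xi^s)$ is a $4$-dimensional Markov process whose generator is the sum of two copies of Proposition \ref{prop:Dynkin}(ii), and the linear ODE satisfied by the relevant conditional moments has eigenvalues combining those of each $2\times 2$ block (namely $-\beta \pm \sqrt\alpha$), all strictly negative under $\alpha < \beta^2$, yielding $\E_t[\int_t^T \hat\lambda_u^s\, du] \leq C(\hat\lambda_t^s + \hat\xi_t^s)$ and then $\E[\lambda_s \lambda_t (\hat\lambda_t^s + \hat\xi_t^s)] \leq C e^{-(\beta-\sqrt\alpha)(t-s)}$. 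Integrating the resulting exponential bound over $\{0 \leq s < t \leq T\}$ produces $\E[I_T^2] \leq C' T$ and concludes the proof.

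The main technical obstacle is the explicit ODE analysis in the Erlang case, where one must simultaneously track several cross-correlations such as $\E[\lambda_u \hat\xi_u^s]$ and $\E[\xi_u \hat\lambda_u^s]$; the strict negativity of the real parts of the joint generator's eigenvalues guarantees exponential decay in principle, but extracting the explicit constants requires some careful linear algebra (one could alternatively invoke the independence of $\hat\lambda^s$ from $\sigma(\lambda)$ coming from Poisson shift-invariance in the $\theta$-variable, at the cost of a less self-contained argument). A secondary subtlety is the rigorous derivation of the quadratic covariation formula $d\langle \hat M^s, \hat M^t\rangle_u = \vartheta^2 \min(\hat\lambda_u^s, \hat\lambda_u^t)\, du$, which follows from the fact that the two martingales share precisely those jumps of $N$ lying in the overlap of their defining stripes.
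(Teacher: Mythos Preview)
Your proof is correct and follows the same overall architecture as the paper's: Cauchy--Schwarz to reduce to $\E[I_T^2]$, symmetrized double integral, the split $\hat M_T^s = \hat M_t^s + (\hat M_T^s - \hat M_t^s)$ with the first piece killed by the martingale property of $\hat M^t$, and the covariation identity $\E_t[(\hat M_T^s-\hat M_t^s)\hat M_T^t] = \vartheta^2\,\E_t[\int_t^T \min(\hat\lambda_u^s,\hat\lambda_u^t)\,du]$, then bounded by $\hat\lambda_u^s$ and shown to decay exponentially in $t-s$.

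The one genuine difference is in how the exponential decay of $\E[\lambda_s\lambda_t\hat\lambda_t^s]$ is obtained. The paper exploits the Poisson shift-invariance in the $\theta$-variable to conclude that $\hat\lambda^s$ is independent of $\mathcal F_s^N$, hence $\E_s[\lambda_t\hat\lambda_t^s]=\E_s[\lambda_t]\,\E[\hat\lambda_t^s]$, and then simply plugs in $\E[\hat\lambda_t^s]=\alpha e^{(\alpha-\beta)(t-s)}$ (resp.\ the analogous Erlang expression) together with Cauchy--Schwarz and Lemma~\ref{lemma:secondorder}. You instead treat $(\lambda,\hat\lambda^s)$ (resp.\ $(\lambda,\xi,\hat\lambda^s,\hat\xi^s)$) as a joint Markov process and apply Dynkin's formula to $g(\lambda,\hat\lambda)=\lambda\hat\lambda$, solving the resulting linear ODE system. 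Both routes are valid: the paper's factorization is shorter and avoids any linear algebra, while your generator approach is more self-contained and would extend more mechanically to higher-order moments or more general Markovian kernels. You do mention the independence route as an alternative, so you are aware of both; the paper simply chose the cleaner one as its main line.
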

\begin{proof}
We proceed in three steps. First we provide a general estimate for term $A_{1,3}$ for a general kernel $\Phi$. Then we make use of this estimate in the two particular cases of exponential and Erlang kernels (\ref{eq:expokernel})-(\ref{eq:Erlangkernel}).\\\\
\textbf{Step 1 : a general estimate}\\
In the following lines we provide an estimate for term $A_{1,3}$ for a general kernel $\Phi$ enjoying Assumptions \ref{assumptionPhi1} and \ref{assumptionPhi2}.
	The idea is to bound the term $A_{1,3}$ as tightly as possible. To do so, we start with a Cauchy-Schwarz inequality to get
	$$A_{1,3} \leq \frac{1}{T} \E\left[\left| \int_0^T \lambda_t \hat M_T^t dt \right|^2\right]^{\frac{1}{2}}. $$
	From now on we are interested in the term $\E\left[\left| \int_0^T \lambda_t \hat M_T^t dt \right|^2\right].$ By expanding the square one obtains
	\begin{align*}
	\E\left[\left|\int_0^T \lambda_t \hat M_T^t dt\right|^2\right]
	&=2 \E \left[\int_0^T \int_0^t \lambda_t \lambda_s \hat M_T^t \hat M_T^s\right]ds dt\\
	&=2 \int_0^T \int_0^t \E[\lambda_t\lambda_s \E_t[\hat M_T^t \hat M_T^s]] ds dt\\
	&=2 \int_0^T \int_0^t \E[\lambda_t\lambda_s \E_t[\hat M_T^t (\hat M_T^s-\hat M_t^s)]] ds dt \\
	&=2 \int_0^T \int_0^t \E[\lambda_t\lambda_s \E_t\left[[\hat  M^t, (\hat M^s-\hat M_t^s)]_T]\right] ds dt,
	\end{align*}
	where we have used the fact that for any $t$, $\hat M_\cdot^t$ is a martingale with $\hat M_t^t=0$.
	Hence
\begin{align*}
&\E_t[[\hat  M^t, (\hat M^s-\hat M_t^s)]_T]\\
&=\E_t\left[ \sum _{t<u\leq T} \Delta _u \hat M ^t \Delta _u \hat M ^s \right] \\
&=\E_t\left[ \int_{(t,T] \times E} x \textbf{1}_{\{\theta \leq \hat \lambda_u^t\}} N(du,d\theta,dx) \int_{(t,T] \times E} x \textbf{1}_{\{\theta \leq \hat \lambda_u^s\}} N(du,d\theta,dx) \right] \\
&=\E_t\left[ \int_{(t,T] \times E} x^2 \textbf{1}_{\{\theta \leq \hat \lambda_u^t\}} \textbf{1}_{\{\theta \leq \hat \lambda_u^s\}} du d\theta \nu(dx) \right] \\
&= \vartheta^2 \; \E_t\left [\int _t^T \min (\hat \lambda^t_u,\hat \lambda^s_u)du \right].
\end{align*}
	The last entity can be bounded as follows
	$$\E_t[[\hat M^t, (\hat M^s-\hat M_t^s)]_T] \leq  \vartheta^2 \int _t^T \min (\E_t[\hat\lambda^t_u], \E_t[\hat\lambda^s_u])du.$$
	Since the intensity $\lambda$ of a Hawkes process is positive, multiplying by it preserves the inequality, leading to 
	\begin{align}
	\label{eq:A13estgeneral}
	A_{1,3} &\leq \frac{1}{T} \E\left[\left| \int_0^T \lambda_t \hat M_T^t dt \right|^2\right]^{\frac{1}{2}} \nonumber\\
	&\leq \vartheta^2 \frac{\sqrt{2}}{\sqrt{T}}  \left(\frac{1}{T}\int_0^T \int_0^t \E\left[\lambda_t\lambda_s \int _t^T \min (\E_t[\hat\lambda^t_u], \E_t[\hat\lambda^s_u])du\right]ds dt\right)^{1/2}\\
	&=: \vartheta^2 \frac{\sqrt{2}}{\sqrt{T}}  \left(\frac{1}{T}I_T\right)^{1/2},
	\end{align}
 with obvious notation. 
	The rest of the proof consists in specifying estimates of Quantity $I_T$ for the two particular Hawkes processes under interest.\\\\ 
\textbf{Step 2 : the exponential case} \\
We assume an exponential kernel  (\ref{eq:expokernel})  $\Phi(u)=\alpha e^{-\beta u} $ with $0 < \alpha < \beta$.\\
The main advantage of the Markov framework of the exponential kernel is the fact that many formulae for $\lambda$ and $\hat \lambda$ are known explicitly. In fact, given a starting time $t$, $\hat \lambda^t$ is defined for $u\geq t$ and it satisfies the following SDE (using once again Notation \ref{notation:systemhat})
$$d\hat \lambda_u^t= (\alpha-\beta)\hat \lambda_u^t du +\alpha d\mathcal M^t_u,$$
with $\mathcal M^t_u= \hat H^t_u - \int_t^u \hat\lambda^t_s ds$,
which yields after taking the initial conditions into account
$$\E_t [\hat\lambda^t_u]=\alpha e^{(\alpha-\beta)(u-t)} \quad \textrm{and} \quad \E_t [\hat\lambda^s_u]=\hat\lambda^s_t e^{(\alpha-\beta)(u-t)}.$$
The bound on $I_T$ becomes (from now on everything is written up to a positive multiplicative constant $C$ depending on $\alpha, \beta$ that may differ from line to line)
\begin{align*}
I_T &\leq C \int_0^T \int_0^t \E[\lambda_t \lambda_s \int _t^T \min (\alpha,\hat\lambda^s_t)e^{(\alpha-\beta)(u-t)}du]ds dt,\\
&=C\int_0^T \int_0^t \E[\lambda_t \lambda_s\min (\alpha,\hat\lambda^s_t)]\int _t^T e^{(\alpha-\beta)(u-t)}du ds dt,\\
&=C\int_0^T \int_0^t \E[\lambda_t \lambda_s\min (\alpha,\hat\lambda^s_t)] ds(1-e^{(\alpha-\beta)(T-t)})dt,\\
&=C\int_0^T \int_0^t \E[\lambda_s \E_s[\lambda_t\min (\alpha,\hat\lambda^s_t)]] ds(1-e^{(\alpha-\beta)(T-t)})dt,\\
&\leq C\int_0^T \int_0^t \E[\lambda_s \E_s[\lambda_t\hat\lambda^s_t]] ds(1-e^{(\alpha-\beta)(T-t)})dt.
\end{align*}
Since $\hat \lambda_t^s$ starts at $s$, it is independent from $\mathcal F^N_s$  thus $\E_s[\lambda_t\hat\lambda^s_t]=\E_s[\lambda_t]\E[\hat\lambda^s_t]= \E_s[\lambda_t] \alpha e^{(\alpha-\beta)(t-s)}$.\\
Hence the Cauchy-Schwarz inequality together with Lemma \ref{lemma:secondorder} entail :
\begin{align*}
I_T &\leq C \int_0^T \int_0^t \E[\lambda_s \E_s[\lambda_t]] e^{(\alpha-\beta)(t-s)}ds (1-e^{(\alpha-\beta)(T-t)})dt,\\
&= C \int_0^T \int_0^t \E[\lambda_s \lambda_t] e^{(\alpha-\beta)(t-s)}ds (1-e^{(\alpha-\beta)(T-t)})dt,\\
&\leq C \int_0^T \int_0^t \E[\lambda_s^2]^{\frac{1}{2}} \E[\lambda_t^2]^{\frac{1}{2}} e^{(\alpha-\beta)(t-s)}ds (1-e^{(\alpha-\beta)(T-t)})dt, \quad \textrm{Cauchy-Schwarz}\\
&\leq C \int_0^T \int_0^te^{(\alpha-\beta)(t-s)}ds (1-e^{(\alpha-\beta)(T-t)})dt,  \quad {\textrm{using Lemma \ref{lemma:secondorder}}}\\
&= C \int_0^T (1-e^{(\alpha-\beta)t})(1-e^{(\alpha-\beta)(T-t)})dt,\\
&= C \int_0^T 1 -e^{(\alpha-\beta)(T-t)}-e^{(\alpha-\beta)t} +e^{(\alpha-\beta)T}dt,\\
&\leq C (T +e^{(\alpha-\beta)T}-1 + Te^{(\alpha-\beta)T}) \quad \textrm{since $\alpha-\beta <0$},\\
&\leq C \cdot T.
\end{align*}
Finally  using (\ref{eq:A13estgeneral})  we conclude that \,
$A_{1,3} = O \left(\frac{1}{\sqrt {T}}\right).$\\

\noindent \textbf{Step 3 : the Erlang case} \\
We assume an Erlang kernel  (\ref{eq:Erlangkernel}) $\Phi(u)=\alpha u e^{-\beta u}$ with $0 < \alpha < \beta^2$.\\
Even though the intensity $\lambda$ is no longer a Markov process, it is possible to "Markovize" it by taking an auxiliary process $\xi$ into account.\\
In the case of the shifted vanishing process $\hat \lambda^s$, $\hat \xi ^s_u=\alpha e^{-\beta (u-s)}+\alpha \int _s^u  e^{-\beta (u-v)} d\hat H^s_v, $ and the process $(\hat \lambda^s,\hat \xi^s)$ follows the SDE:
\begin{equation*} 
\systeme{
	d\hat \lambda_u^s=-\beta \hat \lambda_u^s du+\hat \xi _u^s du,
	d\hat \xi _u^s=-\beta \hat \xi _u^s du +\alpha d\hat H^s_u.
}
\end{equation*}
This  yields after solving the system
\begin{align*}
\begin{pmatrix}
\hat \lambda_u^s\\
\hat \xi_u^s
\end{pmatrix}
=&
\frac{\sqrt \alpha}{2}
\big(
(\int_s^u e^{( \sqrt \alpha-\beta)(s-v)}d\hat{ \mathcal M^s_v}+1)e^{( \sqrt \alpha-\beta)(u-s)}
\begin{pmatrix}
1\\
\sqrt \alpha
\end{pmatrix}\\
&-
(\int_s^u e^{-( \sqrt \alpha+\beta)(s-v)}d\hat{ \mathcal M^s_v}+1)e^{-( \sqrt \alpha+\beta)(u-s)}
\begin{pmatrix}
1\\
-\sqrt \alpha
\end{pmatrix}
\big).
\end{align*}
And finally
\begin{equation}
\label {equation:expectation}
\begin{pmatrix}
\mathbb E_t[\hat \lambda_u^s]\\
\mathbb E_t[\hat \xi_u^s]
\end{pmatrix}
=
\frac{1}{2}(\hat \lambda_t^s+\frac{\hat \xi_t^s}{\sqrt \alpha})e^{(\sqrt \alpha-\beta)(u-t)}
\begin{pmatrix}
1\\
\sqrt \alpha
\end{pmatrix}
+
\frac{1}{2}(\hat \lambda_t^s-\frac{\hat \xi_t^s}{\sqrt \alpha})e^{-(\sqrt \alpha+\beta)(u-t)}
\begin{pmatrix}
1\\
-\sqrt \alpha
\end{pmatrix}
.
\end{equation}
Once again we make use of the general estimate (\ref{eq:A13estgeneral}) and estimate the quantity 
$$I_T = \int_0^T \int_0^t \E[\lambda_t\lambda_s \int _t^T \min (\E_t[\hat\lambda^t_u], \E_t[\hat\lambda^s_u])du]ds dt,$$
We have
$$I_T\leq \int_0^T \int_0^t \E[\lambda_t\lambda_s \int _t^T  \E_t[\hat\lambda^s_u]du] ds dt.$$
We chose $\E_t[\hat\lambda^s_u]$ on purpose, because since the process starts earlier it has more chances of vanishing at time $t$. 
\begin{align*}
I_T \leq & C\int_0^T \int_0^t \E[\lambda_t\lambda_s \int _t^T \frac{1}{2}(\hat \lambda_t^s+\frac{\hat \xi_t^s}{\sqrt \alpha})e^{(\sqrt \alpha-\beta)(u-t)}+\frac{1}{2}(\hat \lambda_t^s-\frac{\hat \xi_t^s}{\sqrt \alpha})e^{-(\sqrt \alpha+\beta)(u-t)} du ds dt,\\
=&C\int_0^T \int_0^t \E[\lambda_t\lambda_s (\hat \lambda_t^s+\frac{\hat \xi_t^s}{\sqrt \alpha})]\int _t^T e^{(\sqrt \alpha-\beta)(u-t)} du ds dt \\ &+C \int_0^T \int_0^t \E[\lambda_t\lambda_s (\hat \lambda_t^s-\frac{\hat \xi_t^s}{\sqrt \alpha})] \int _t^T e^{-(\sqrt \alpha+\beta)(u-t)} du ds dt,\\
\leq& C\int_0^T (1-e^{(\sqrt \alpha-\beta)(T-t)}) \int_0^t \E[\lambda_t\lambda_s (\hat \lambda_t^s+\frac{\hat \xi_t^s}{\sqrt \alpha})] ds dt \\&+C\int_0^T (1-e^{-(\sqrt \alpha+\beta)(T-t)}) \int_0^t \E[\lambda_t\lambda_s (\hat \lambda_t^s-\frac{\hat \xi_t^s}{\sqrt \alpha})] ds dt,\\
:=& C(I_T^+ + I_T^-).
\end{align*}
Like in the Markov case, $\hat \lambda ^s$ and $\hat \xi^s$ both start at time $s$, thus $\E_s [\lambda_t \hat \lambda ^s_t]=\E_s[\lambda_t] \E[\hat \lambda ^s_t]$ and the same holds for $\hat \xi^s$ as well. 
In addition, since $\E_s[\hat \lambda_t^s ]=\E[\hat \lambda_t^s ]$ and since $ \hat \lambda_s^s=0$, Equation \ref{equation:expectation} yields
$$\mathbb E[\hat \lambda_t^s]=\frac{\sqrt \alpha}{2}(e^{( \sqrt \alpha-\beta)(t-s)} -e^{-( \sqrt \alpha+\beta)(t-s)}),$$
and
$$\frac{\E[\hat \xi^s_t]}{\sqrt \alpha}=\frac{\sqrt \alpha}{2}(e^{( \sqrt \alpha-\beta)(t-s)} +e^{-( \sqrt \alpha+\beta)(t-s)}).$$
Hence
\begin{align*}
I_T^+ &=\int_0^T (1-e^{(\sqrt \alpha-\beta)(T-t)}) \int_0^t \E[\lambda_s \E_s [\lambda_t (\hat \lambda_t^s+\frac{\hat \xi_t^s}{\sqrt \alpha})]] ds dt ,\\
&=\int_0^T (1-e^{(\sqrt \alpha-\beta)(T-t)}) \int_0^t \E[\lambda_s \E_s [\lambda_t] \E[\hat \lambda_t^s+\frac{\hat \xi_t^s}{\sqrt \alpha}]] ds dt,\\
&=\int_0^T (1-e^{(\sqrt \alpha-\beta)(T-t)})\int_0^t \E[\lambda_t \lambda_s] \sqrt \alpha e^{( \sqrt \alpha-\beta)(t-s)} ds dt.
\end{align*}
As for the exponential kernel, it is possible to have bounds on the intensity's second moment using Lemma \ref{lemma:secondorder}, which yields by the Cauchy-Schwarz inequality
$$\E[\lambda_t \lambda_s] \leq \E[\lambda_s^2]^{\frac{1}{2}} \E[\lambda_t^2]^{\frac{1}{2}} \leq C.$$
Finally
\begin{align*}
I_T^+ &\leq \int_0^T (1-e^{(\sqrt \alpha-\beta)(T-t)}) \int_0^t e^{( \sqrt \alpha-\beta)(t-s)} ds dt,\\
&\leq \int_0^T (1-e^{(\sqrt \alpha-\beta)(T-t)}) (1-e^{(\sqrt \alpha-\beta)t})dt,\\
&\leq \int_0^T 1-e^{(\sqrt \alpha-\beta)t} -e^{(\sqrt \alpha-\beta)(T-t)} + e^{(\sqrt \alpha-\beta)T}dt,\\
& = O(T).
\end{align*}
Following the same lines we get for $|I_T^-|$ :
\begin{align*}
I_T^- &=\int_0^T (1-e^{-(\sqrt \alpha+\beta)(T-t)}) \int_0^t \E[\lambda_s \E_s [\lambda_t (\hat \lambda_t^s-\frac{\hat \xi_t^s}{\sqrt \alpha})]] ds dt ,\\
&=\int_0^T (1-e^{-(\sqrt \alpha+\beta)(T-t)}) \int_0^t \E[\lambda_s \E_s [\lambda_t] \E[\hat \lambda_t^s-\frac{\hat \xi_t^s}{\sqrt \alpha}]] ds dt,\\
&=-\int_0^T (1-e^{-(\sqrt \alpha+\beta)(T-t)})\int_0^t \E[\lambda_t \lambda_s] \sqrt \alpha e^{-( \sqrt \alpha+\beta)(t-s)} ds dt.
\end{align*}
So $|I_T^-| = O(T)$, and since $I_T \leq I_T^+ + |I_T^-|$, we conclude using once again (\ref{eq:A13estgeneral}) that
$$A_{1,3} = O\left( \frac{1}{\sqrt {T}}\right).$$
\end{proof}

\begin{lemma}
	\label{lemma:alt} Set $Y_T=\frac{H_T-\int_0^T\E[\lambda_t]dt}{\sqrt T}$ and $F_T=\frac{H_T-\int_0^T \lambda_t dt}{\sqrt T}$.\\
	Assume that $\Phi(u)=\alpha e^{-\beta u}$ (with $\alpha < \beta $) or $\Phi(u)=\alpha u e^{-\beta u}$ (with $\alpha < \beta^2 $).\\ 
	Then $Y_T$ and $F_T$ satisfy the relation:
	$$\frac{Y_T}{\gamma}=F_T+\mathfrak R_T,$$
	where 
	
\begin{align*}
\gamma= \frac{1}{1-\|\Phi\|_1}=\begin{cases}
\frac{1}{1-\alpha/\beta}, \quad \text {if} \quad\Phi(u)=\alpha e^{-\beta u} \\
\frac{1}{1-\alpha/\beta^2}, \quad \text {if} \quad \Phi(u)=\alpha u e^{-\beta u} \\
\end{cases}
\end{align*}
and 
\begin{align*}
\mathfrak  R_T= \begin{cases} 
\frac{\mathbb E [\lambda_T]-\lambda_T}{\beta \sqrt T}, \quad \text {if} \quad\Phi(u)=\alpha e^{-\beta u} \\
\frac{\mathbb E [\lambda_T]-\lambda_T}{\beta \sqrt T}+\frac{\mathbb E [\xi_T]-\xi_T}{\beta^2 \sqrt T}, \quad\text {if} \quad \Phi(u)=\alpha u e^{-\beta u} \\
\end{cases}
\end{align*}
\end{lemma}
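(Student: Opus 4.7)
The plan is to derive the identity by first integrating the SDE for the intensity $\lambda$ on $[0,T]$, which under the exponential or Erlang kernel admits a clean finite-dimensional Markov representation. Concretely, in the exponential case $\Phi(u)=\alpha e^{-\beta u}$ one has $\lambda_t-\mu=\alpha\int_0^t e^{-\beta(t-s)}dH_s$, so Fubini gives
\begin{equation*}
\int_0^T \lambda_t\,dt \;=\; \mu T+\frac{\alpha}{\beta}H_T-\frac{\lambda_T-\mu}{\beta}.
\end{equation*}
In the Erlang case $\Phi(u)=\alpha ue^{-\beta u}$, introducing the auxiliary process $\xi_t:=\alpha\int_0^t e^{-\beta(t-s)}dH_s$ produces the two-dimensional SDE $d\lambda_t=(\xi_t-\beta(\lambda_t-\mu))dt$, $d\xi_t=-\beta\xi_t dt+\alpha dH_t$ (with $\lambda_0=\mu$, $\xi_0=0$). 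Integrating these two equations between $0$ and $T$ and eliminating $\int_0^T \xi_t dt$ yields
\begin{equation*}
\int_0^T \lambda_t\,dt \;=\; \mu T+\frac{\alpha}{\beta^2}H_T-\frac{\xi_T}{\beta^2}-\frac{\lambda_T-\mu}{\beta}.
\end{equation*}

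Next, I would subtract each identity from its expectation (using the convenient fact that $\E[H_T]=\int_0^T\E[\lambda_t]dt$ so the deterministic $\mu T$ terms cancel) to get
\begin{equation*}
H_T-\int_0^T\lambda_t dt \;=\; (1-\|\Phi\|_1)(H_T-\E[H_T]) - \Delta_T,
\end{equation*}
where in the exponential case $\Delta_T=\frac{\lambda_T-\E[\lambda_T]}{\beta}$ and in the Erlang case $\Delta_T=\frac{\lambda_T-\E[\lambda_T]}{\beta}+\frac{\xi_T-\E[\xi_T]}{\beta^2}$. Dividing by $\sqrt T$ and recognising that $\frac{H_T-\E[H_T]}{\sqrt T}=Y_T$ and $\frac{H_T-\int_0^T\lambda_t dt}{\sqrt T}=F_T$ gives
\begin{equation*}
F_T \;=\; \frac{1}{\gamma}Y_T - \frac{\Delta_T}{\sqrt T},
\end{equation*}
i.e.\ $\frac{Y_T}{\gamma}=F_T+\mathfrak R_T$ with $\mathfrak R_T=\frac{\Delta_T}{\sqrt T}$ of the announced form, upon noting $1-\|\Phi\|_1=\alpha/\beta$ (resp.\ $\alpha/\beta^2$) and $\gamma=1/(1-\|\Phi\|_1)$.

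There is essentially no obstacle here: the argument is a pure algebraic manipulation of the pathwise integration of $\lambda$ (and $\xi$). The only point requiring a word of care is the Fubini exchange in the exponential case and the verification that in the Erlang case $\lambda_t$ has no jump at $t$ (since the kernel vanishes at $0$), so that applying Itô's formula to $\lambda_t$ and $\xi_t$ gives drift-type equations with a single jump contribution concentrated in $\xi$. Once the identity is established, both $\mathfrak R_T$ expressions follow by just collecting terms.
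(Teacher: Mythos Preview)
Your approach is essentially the same as the paper's: both integrate the SDE for $\lambda$ (and for $\xi$ in the Erlang case) over $[0,T]$ and compare the resulting identity with its expectation to isolate $Y_T$ versus $F_T$. Two small slips to correct: the intermediate identity should read
\[
H_T-\int_0^T\lambda_t\,dt=(1-\|\Phi\|_1)\bigl(H_T-\E[H_T]\bigr)+\Delta_T
\]
(with a plus sign), which then yields $\frac{Y_T}{\gamma}=F_T-\frac{\Delta_T}{\sqrt T}$ and hence $\mathfrak R_T=-\Delta_T/\sqrt T=\frac{\E[\lambda_T]-\lambda_T}{\beta\sqrt T}$ as stated; and in your final line you wrote $1-\|\Phi\|_1=\alpha/\beta$ where you meant $\|\Phi\|_1=\alpha/\beta$ (resp.\ $\alpha/\beta^2$).
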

\begin{proof}
	We remind that the advantage of the exponential and Erlang kernels is the SDE. This SDE will be used to eliminate the integral in $Y_T$ and $F_T$ in order to obtain alternative expressions.\\\\
	\textbf{Case 1 : the kernel is an exponential function\\}
	We recall that $\|\Phi\|_1= \frac{\alpha}{\beta}$ and that the intensity is a solution to the following SDE:
	$$d\lambda_t=\beta (\mu -\lambda_t)dt +\alpha dH_t.$$ We start by integrating the SDE between $0$ and $T$:
	\begin{align*}
	\int _0^T d\lambda_t&=\int_0^T \beta(\mu-\lambda_t)dt+\alpha dH_t, \\
	\lambda_T -\mu &= \beta \mu T - \beta \int_0^T\lambda_t dt +\alpha H_T.
	\end{align*}
	After rearranging the terms, $F_T$ can be put under the form:
	$$F_T = \frac{\lambda_T-\mu +(\beta -\alpha)H_T-\beta \mu T}{\beta \sqrt T}.$$
	When it comes to $Y_T$, we start by taking the expected value of the SDE:
	\begin{align*}
	d\mathbb E [\lambda_t] &=\beta(\mu-\mathbb E [\lambda_t])dt+\alpha d \mathbb E[H_t],\\
	&=\beta(\mu-\mathbb E [\lambda_t])dt+\alpha \mathbb E[\lambda_t] dt,
	\end{align*}
	and after integrating with respect to time:
	\begin{align*}
	\mathbb E [\lambda_T] -\mu &= \beta \mu T +(\alpha-\beta) \int_0^T\mathbb E[\lambda_t] dt,\\
	&=\beta \mu T +(\alpha-\beta) (\int_0^T\mathbb E [\lambda_t] dt-H_T)+(\alpha-\beta) H_T.\\
	\end{align*}
	And finally:
	$$Y_T =\frac{\mathbb E[\lambda_T]-\mu +(\beta -\alpha)H_T-\beta \mu T}{(\beta-\alpha) \sqrt T}.$$
	
	These alternative expressions allow us to deduce a simple relation between $F_T$ and $Y_T$:
	$$\beta \sqrt T F_T - (\beta-\alpha)\sqrt T Y_T =\lambda_T -\mathbb E [\lambda_T],$$
	which is equivalent to:
	$$\frac{Y_T}{\gamma}=F_T + \mathfrak  R_T,$$
	where $\gamma =\frac{1}{1-\|\phi\|_1}= \frac{1}{1-\alpha /\beta}$ and $R_T =\frac{\mathbb E [\lambda_T]-\lambda_T}{\beta \sqrt T}$.\\\\
	\textbf{Case 2 : the kernel is an Erlang function \\}
	In this case $\|\Phi\|_1= \frac{\alpha}{\beta^2}$ and the vector $(\lambda_t,\xi_t)$ where $\xi_t=\int_{[0,t)}\alpha e^{-\beta(t-s)}dH_s$ follows satisfies the following SDE:
	 \begin{equation*}
	 \left\lbrace
	 \begin{array}{l}
	 d\lambda_t =  \xi_t dt +\beta (\mu - \lambda_t) dt, \\\\
	 d\xi_t = -\beta \xi_t dt +\alpha dH_T.
	 \end{array}
	 \right.
	 \end{equation*}
	 We integrate the system:
	 
	\begin{equation*}
	\left\lbrace
	\begin{array}{l}
	\lambda_T -\mu =  \int_0^T\xi_t dt +\beta \mu T -\beta \int_0^T \lambda_t dt, \\\\
	\xi_T = -\beta \int_0^T\xi_t dt +\alpha H_T.
	\end{array}
	\right.
	\end{equation*}
	We eliminate $\int_0^T \xi_t dt$ in the system to obtain:
	\begin{align*}
	\lambda_T-\mu &= \frac{\alpha}{\beta}H_T -\frac{1}{\beta} \xi_T +\mu \beta T -\beta \int_0^T \lambda_t dt,\\
	&=\frac{\alpha}{\beta}H_T -\frac{1}{\beta} \xi_T +\mu \beta T +\beta \sqrt T F_T -\beta H_T,
	\end{align*}
	and after re-arranging the terms:
	$$\beta \sqrt T F_T= \lambda_T-\mu +\frac{\beta ^2-\alpha}{\beta}H_T +\frac{1}{\beta}\xi_T -\mu \beta T.$$
	When it comes to $Y_T$ we take the expected value of the SDE:
	\begin{equation*}
	\left\lbrace
	\begin{array}{l}
	d\mathbb E[\lambda_t] = \big ( \mathbb E[\xi_t] +\beta (\mu -\mathbb E[\lambda_t])\big)dt, \\\\
	d\mathbb E[\xi_t] = -\beta \mathbb E [\xi_t] dt +\alpha \mathbb E[\lambda_t]dt.
	\end{array}
	\right.
	\end{equation*}
	After taking the integral and eliminating $\int_0^T \mathbb E [\xi_t]dt $  in the system:
	\begin{align*}
	\mathbb E [\lambda_T]-\mu &= -\frac{1}{\beta}\mathbb E [\xi_T] +\frac{\alpha}{\beta} \int_0^T \mathbb E [\lambda_t] dt +\beta \mu T -\beta \int_0^T \mathbb E [\lambda_t]dt,\\
	&=-\frac{1}{\beta}\mathbb E [\xi_T] +\beta \mu T +(\frac{\alpha}{\beta} -\beta)\int_0^T \mathbb E [\lambda_t] dt,\\
	&=-\frac{1}{\beta}\mathbb E [\xi_T] +\beta \mu T + \frac{\beta^2-\alpha}{\beta}\sqrt T Y_T+\frac{\alpha-\beta^2}{\beta} H_T,
	\end{align*}
	which yields after re-arranging the terms:
	$$\frac{\beta^2-\alpha}{\beta} \sqrt T Y_T= \mathbb E [\lambda_T] -\mu +\frac{\beta^2-\alpha}{\beta}H_T +\frac{1}{\beta}\mathbb E[\xi_T] -\mu\beta T.$$
	Combining these expressions on $F_T$ and $Y_T$ yields the following relation:
	$$\beta \sqrt T F_T-\frac{\beta^2-\alpha}{\beta} \sqrt T Y_T= \lambda_T -\mathbb E [\lambda_T] +\frac{1}{\beta} (\xi_T -\mathbb E [\xi_T]),$$
	which is equivalent to:
	$$\frac{Y_T}{\gamma}=F_T+\mathfrak  R_T$$
	with $\gamma=\frac{1}{1-\|\phi\|_1}=\frac{1}{1-\alpha/\beta^2}$ and $R_T=\frac{\mathbb E [\lambda_T]-\lambda_T}{\beta \sqrt T}+\frac{\mathbb E [\xi_T]-\xi_T}{\beta^2 \sqrt T}.$
\end{proof}


\end{document}